\documentclass[12pt]{article}
\usepackage{amsmath,amssymb,amsthm,amsfonts,graphicx,color,enumerate,float,epsfig,comment}
  \usepackage{tikz}
  \usepackage{tkz-euclide}
\usepackage[title]{appendix}

  \usetikzlibrary{arrows,calc,decorations.pathmorphing,backgrounds,positioning,fit}

\usepackage{filecontents}
\usepackage{hyperref}
\hypersetup{
    colorlinks=true,
    linkcolor=black,
    citecolor=black,
    }
\DeclareMathOperator{\supp}{{\rm supp}}
\DeclareMathOperator{\id}{1}
\DeclareMathOperator{\Prob}{{\bf P}}
\DeclareMathOperator{\E}{{\bf E}}
\DeclareMathOperator{\Isom}{{\rm Isom}}
\DeclareMathOperator{\Ax}{{\rm Ax}}

\newcommand{\cl}[1]{\lceil #1 \rceil}
\def\N{{\mathbb N}}
\def\e{{\varepsilon}}
\def\H{{\Bbb H}}
\def\Dc{{\mathcal D}}
\def\Cay{{\rm Cay}}

\newcommand{\tr}[1]{{\rm tr}(#1)}
\newcommand{\qa}[1]{{\rm Ax}(#1)}
\newcommand{\nbd}[2]{{\mathcal{N}}_{#2}(#1)}
\newcommand{\mb}[1]{\boldsymbol{#1}}

\begin{document}

\newcommand{\kf}[1]{\marginpar{\tiny #1 --kf}}
\newcommand{\rt}[1]{\marginpar{\bf\tiny #1 --rt}}

\newtheorem{thm}{Theorem}[section]
\newtheorem{lemma}[thm]{Lemma}
\newtheorem{cor}[thm]{Corollary}
\newtheorem{prop}[thm]{Proposition}
\newtheorem{claim}[thm]{Claim}
\newtheorem{question}[thm]{Question}
\newtheorem*{mainthm0}{Theorem \ref{log}}

\newtheorem{remark}[thm]{Remark}
\newtheorem{important remark}[thm]{Important Remark}
\newtheorem{definition}[thm]{Definition}
\newtheorem{example}[thm]{Example}
\newtheorem{fact}[thm]{Fact}
\newtheorem{convention}[thm]{Convention}

\def\ad{{\rm ad}}
\def\diam{\operatorname{diam}}
\def\vl{\operatorname{vl}}
\def\svl{\operatorname{svl}}
\def\cal{\mathcal}
\def\R{{\mathbb R}}
\def\Z{{\mathbb Z}}

\def\Mod{{\rm Mod}}
\def\ep{{\varepsilon}}
\def\C{{\mathcal C}}
\def\T{{\mathcal T}}
\def\TT{\overline{\mathcal T}}
\def\dTT{\partial \overline{\mathcal T}}
\def\S{{\mathcal S}}
\def\SS{{\overline{\mathcal S}}}
\def\M{{\mathcal M}}
\def\L{{\mathcal L}}
\def\D{\Delta}
\def\s{\sigma}
\def\t{\tau}
\newcommand{\cN}{{\cal N}}
\newcommand{\F}{{\mathbb F}}

\def\<{{\langle}}
\def\>{{\rangle}}
\def\red{\textcolor{red}}

\title{Drift and joint minimal displacement for groups acting on hyperbolic spaces}

\author{Koji Fujiwara and Ryokichi Tanaka
\thanks{
K.F.\ is supported by JSPS Grant-in-Aid for Scientific Research 25H00588.}
\thanks{ 
R.T.\ is supported by JSPS Grant-in-Aid for Scientific Research 24K06711.}}
\renewcommand\footnotemark{}

\newcommand{\KF}{
\begin{flushleft}
\textsc{Koji Fujiwara}\\
OIST, Okinawa, 904-0495 JAPAN, and\\ KUIAS/RIMS
Kyoto University, 
JAPAN\\
\textit{E-mail address}: 
{\tt kfujiwara@math.kyoto-u.ac.jp}
\end{flushleft}
}

\newcommand{\RT}{
\begin{flushleft}
\textsc{Ryokichi Tanaka}\\
Department of Mathematics,\\
Kyoto University,
Kyoto 606-8502 JAPAN\\
\textit{E-mail address}: {\tt rtanaka@math.kyoto-u.ac.jp}
\end{flushleft}
}

\maketitle

\begin{abstract}
We obtain a uniform lower bound for the drift of random walks on hyperbolic spaces in terms of joint minimal displacement. Our main theorem applies to weakly acylindrical actions on Gromov-hyperbolic spaces and is quantitative in all relevant constants. For discrete isometry groups of Hadamard manifolds with pinched negative curvature, the required assumptions are automatically satisfied by the Margulis lemma. 

This yields uniform gaps at zero in drift spectra for pinched negatively curved Hadamard manifolds, hyperbolic groups, and mapping class groups acting on curve graphs. 
\end{abstract}

\section{Introduction}

Let $(X, d)$ be a Gromov-hyperbolic metric space.
Let $\mu$ be a countably supported probability measure on $\Isom(X)$.
Consider an independent, identically distributed (i.i.d.)\ sequence of random variables $(g_n)_{n \in \N}$ with the law $\mu$.
Let
$w_n:=g_1\cdots g_n$.
We call $(w_n)_{n \in \N_0}$ a $\mu$-{\bf random walk} starting from the identity $1$ where $w_0:=1$.

Fix a point $o \in X$. 
We call $(w_no)_{n \in \N_0}$  a random walk on $X$.
Suppose that $\mu$ has a finite first moment, i.e., $\E d(o, w_1 o)<+\infty$.
Here $\E$ denotes the expectation with respect to the distribution of $\mu$-random walk $(w_n)_{n\in \N_0}$.
The following limit exists and is called the {\bf drift}:
\[
\ell(\mu):=\lim_{n \to +\infty}\frac{1}{n}\E d(o, w_n o).
\]

If the semigroup generated by the support of $\mu$ contains a pair of independent hyperbolic isometries on $X$, the measure $\mu$ is called {\bf non-elementary}.
The drift is known to be positive if $\mu$ is non-elementary \cite[Theorem 1.2]{MT} and \cite[Theorem 1.1]{G}. 

In this paper we give a uniform lower bound of the drift in terms of 
joint minimal displacement. 
For a finite set $U$ of isometries of $X$, 
the {\bf joint minimal displacement} is defined by
\[
L(U):=\inf_{x \in X}\max_{u \in U}d(x, ux).
\]
See \cite{BF} for a systematic study around  this invariant. 

\subsection{Main results}
We  discuss several settings for $X$.  One is that $X$ is an Hadamard manifold with pinched sectional curvature.
An {\bf Hadamard manifold} is a connected, simply connected, complete Riemannian manifold with non-positive sectional curvature.
We say it has pinched negative sectional curvature if the sectional curvature 
is between two negative constants.

For random walks driven by a uniform distribution $\mu_U$ on a finite set $U$ in $\Isom(X)$, we have the following.

\begin{thm}[Corollary \ref{cor.srwHn}]\label{thm.intro_srw}
Fix integers $n, m \ge 2$ and a real $a\ge 1$.
Then, there exists a constant $\kappa(n, a, m)>0$ such that the following holds.

Let $X$ be an $n$-dimensional Hadamard manifold with sectional curvature $K_X \in [-a^2, -1]$,
and $U$ a finite symmetric set in $\Isom(X)$ of cardinality $m$.

If the group $\<U\>$ generated by $U$ is discrete and non-virtually-nilpotent, then
\[
\ell(\mu_U) \ge \kappa(n, a, m)L(U).
\]
\end{thm}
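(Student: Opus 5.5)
The plan is to reduce the statement to a general quantitative drift bound for weakly acylindrical actions on $\delta$-hyperbolic spaces; the abstract indicates that the main theorem is of this form. The Margulis lemma will supply the weak acylindricity hypothesis, with constants depending only on $(n,a)$.

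\textbf{Step 1: geometric inputs.} $X$ is CAT$(-1)$, hence $\delta$-hyperbolic with a universal $\delta$ (for instance $\delta=\log(1+\sqrt2)$ after the comparison). By the Margulis lemma for pinched negative curvature there is $\varepsilon=\varepsilon(n,a)>0$ with the following property: for every discrete group $\Gamma$ and every $x\in X$, the subgroup generated by $\{g\in\Gamma : d(x,gx)<\varepsilon\}$ is virtually nilpotent.

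Since $\<U\>$ is discrete and not virtually nilpotent, this gives $L(U)\ge \varepsilon$ whenever $U$ generates. More importantly, it gives the weak acylindricity we need. Fix two points $x,y$ at distance at least $R(n,a)$. Then the set of elements of $\<U\>$ moving both $x$ and $y$ by at most a fixed constant generates a virtually nilpotent subgroup. It is elementary, so it fixes a point at infinity or preserves a geodesic. Thus any collection of elements of $\<U\>$ that coarsely fix a long segment cannot contain two independent loxodromics. This is exactly the non-elementarity-type control a weakly acylindrical hypothesis asks for, with constants depending only on $(n,a)$.

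\textbf{Step 2: the drift estimate.} I would prove, or invoke the paper's main theorem in the form
\[
\ell(\mu_U)\ge c\,L(U)\quad\text{whenever } L(U)\ge C,
\]
with $c,C$ depending on $\delta$, the acylindricity constants and $m$. The mechanism is a ping-pong argument at scale $L(U)$.

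Choose $x$ nearly realizing $L(U)$, and find words of bounded length $N(n,a,m)$ in $U$ that act as loxodromics with translation length $\gtrsim L(U)$ and form a Schottky set. This step is where the bounded-generation result for short loxodromics, in the style of Breuillard--Fujiwara, enters, and weak acylindricity is what forces translation length comparable to $L(U)$. Given such a Schottky set $S\subset U^N$ of size at least two, each of whose elements is chosen with probability at least $m^{-N}$, the standard argument applies. Decompose $\mu_U^{*N}$ as $\theta\mu_S+(1-\theta)\nu$ with $\theta=|S|m^{-N}$. Then pivotal times, as in Gou\"ezel's approach, give linear progress: $\E d(o,w_{kN}o)\ge c'\theta k\, L(U)$. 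Hence $\ell(\mu_U)\ge c'\theta L(U)/N$.

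\textbf{Step 3: small $L(U)$.} The remaining range is $\varepsilon\le L(U)\le C$. Here I use the fact that the Schottky construction above works at a fixed scale: $L(U)\ge\varepsilon$ already forces loxodromic words of length bounded in terms of $(n,a,m)$ with translation length bounded below by a constant. This gives $\ell(\mu_U)\ge c''>0$, and therefore $\ell(\mu_U)\ge (c''/C)L(U)$. Taking the minimum of the two constants yields $\kappa(n,a,m)$.

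\textbf{Main obstacle.} The hardest step is producing Schottky words of length bounded only by $(n,a,m)$ with translation length comparable to $L(U)$, uniformly over all such $U$. My first attempt would combine two ingredients. One is the Breuillard--Fujiwara estimate $L(U^2)\ge$ const$\cdot L(U)$ together with a bounded-length loxodromic. The other is conjugation of that loxodromic by elements of $U$: Margulis-lemma acylindricity prevents all conjugates from sharing an axis, which yields two independent loxodromics whose axes fellow-travel only for a bounded time. Taking sufficiently large bounded powers then produces the ping-pong pair.
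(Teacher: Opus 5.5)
Your overall architecture matches the paper's proof. You reduce to a drift theorem for weakly acylindrical actions (Theorem \ref{thm.drift2}). You get a loxodromic $g$ of bounded word length with $\tr{g}\gtrsim L(U)$, and conjugate it by some $u\in U\setminus E(g)$. Bounded overlap of $\Ax(g)$ and $u\Ax(g)$ then lets bounded powers play ping-pong at scale $L(U)$, and you run Gou\"ezel's pivotal times after rescaling.

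\textbf{The gap is Step~1.} What you derive from the Margulis lemma is that elements coarsely fixing a long segment generate an elementary group. That is not the hypothesis you then use. Weak acylindricity (Definition \ref{def.wa}, equivalently via Lemma \ref{lemma.diam}) is a quantitative statement: for every loxodromic $g$ and every $h\notin E(g)$, the axes $\Ax(g)$ and $h\Ax(g)$ fellow-travel for length at most $\Dc(1+\tr{g})$, with $\Dc=\Dc(n,a)$.

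The elements naturally attached to such an overlap are $g$ and $hg^{\pm1}h^{-1}$. They do not coarsely fix the overlap: they translate along it by $\tr{g}$, which here is of order $L(U)$ and unbounded. To bring in the Margulis lemma one has to pass to elements such as $g^{-j}hg^{j}h^{-1}$, which nearly fix shrinking subsegments. One must then exploit the resulting virtually nilpotent group to force $h\in E(g)$ once the overlap exceeds a fixed multiple of $\tr{g}$. Your sketch contains none of this, and ``cannot contain two independent loxodromics'' does not supply it.

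In the paper this step is Proposition \ref{prop.overlap}, the bound $\diam(\Ax(g)\cap\nbd{h\Ax(g)}{50\delta})\le (n+2)\tr{g}+Z(n,a)$. It is imported from Breuillard--Fujiwara \cite{BF2} and converted into weak acylindricity in Corollary \ref{cor.Hn}. The linear dependence on $\tr{g}$ cannot be replaced by a constant in general. So your ``fellow-travel only for a bounded time'' must mean $O(L(U)+1)$, and that is exactly why a bounded power $m$ suffices.

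\textbf{Secondary corrections.}
\begin{itemize}
\item The lower bound $\tr{g}\ge L(U)+4000\delta+1$ for some $g\in U^{2N}$ does not come from weak acylindricity. It comes from the Bochi-type inequality (Theorem \ref{thm.bochi}, Lemma \ref{lemma.independent.2}), which needs $L(U^N)\ge K_0\delta+1$.
\item The paper gets $L(U^N)\ge K_0\delta+1$ with $N=N(n,a)$ from the Margulis bound $L(U)\ge\e(n,a)$ combined with the CAT(0) growth $L(U^m)\ge\frac{\sqrt m}{2}L(U)$ (Lemma \ref{lemma.CAT0}). It does not use an inequality of the form $L(U^2)\ge c\,L(U)$.
\item Once that holds, your Step~3 case split is unnecessary. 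Proposition \ref{prop.schottky} gives Schottky constants $C,D$ proportional to $L(U)+1$ with $D\ge 20C+100\delta+L(U)$, so rescaling by $L(U)$ works uniformly, including when $L(U)$ is small.
\item A Schottky set of size two is not enough for Gou\"ezel's argument, which needs a small failure proportion $\eta$. The paper takes $\#S\ge 200$ words $(u_1\cdots u_n)^{\pm r}$ in $g^m,h^m$ and applies Lemma \ref{lemma.criterion}.
\item The comparison should be with $\mu_S^{\ast 2}$, not $\mu_S$. Lemma \ref{lemma.lazy} handles padding words when $1\notin U$.
\item Your base point (a near-minimizer for $L(U)$ rather than for $L(\{g,h\})$) is harmless. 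It gives the same $O(N(L(U)+1))$ displacement bounds that Lemma \ref{lemma.tube} uses.
\end{itemize}
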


Here a finitely generated group $G$ is virtually nilpotent if $G$ has a finite index nilpotent subgroup.
See  Corollary \ref{cor.srwHn} for an explicit form of the constant $\kappa(n, a, m)$.

As a consequence of the theorem, we obtain a uniform gap at zero in the set of all values $\ell(\mu_U)$ when we vary $U$ of a fixed size such that $\<U\>$ is discrete. 
Let $\varepsilon(n,a)>0$ be a Margulis constant of an $n$-dimensional Hadamard manifold with sectional curvature $K_X \in [-a^2, -1]$. Then $L(U) \ge \varepsilon(n, a)$
if $\<U\>$ is discrete and non-virtually-nilpotent.

\begin{cor}[Corollary \ref{cor.dichotomy}]\label{cor.intro_dichotomy}
Fix integers $n, m \ge 2$ and a real $a\ge 1$.
Then, there exists a constant $c(n, a, m)>0$ such that the following holds.

Let $X$ be an $n$-dimensional Hadamard manifold with sectional curvature $K_X \in [-a^2, -1]$, and $U$ a finite symmetric set in $\Isom(X)$ of cardinality $m$.

If $\<U\>$ is discrete, then 
either $\ell(\mu_U)=0$ or $\ell(\mu_U) \ge c(n, a, m)$.
\end{cor}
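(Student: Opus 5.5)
We derive the dichotomy from Theorem \ref{thm.intro_srw} together with the Margulis lemma, splitting into two cases according to whether the discrete group $\<U\>$ is virtually nilpotent.

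\emph{Case 1: $\<U\>$ is not virtually nilpotent.} By the Margulis lemma, $L(U)\ge \varepsilon(n,a)$. Indeed, if some $x\in X$ satisfied $\max_{u\in U} d(x,ux)<\varepsilon(n,a)$, then $U$, and hence $\<U\>$, would lie in the subgroup of the discrete group $\<U\>$ generated by elements moving $x$ less than $\varepsilon(n,a)$. That subgroup is virtually nilpotent, a contradiction. (If the infimum in $L(U)$ is not attained, we adjust the constant slightly, or use the non-strict version of the Margulis lemma.) Theorem \ref{thm.intro_srw} then gives
\[
\ell(\mu_U)\ge \kappa(n,a,m)L(U)\ge \kappa(n,a,m)\varepsilon(n,a)=:c(n,a,m)>0 .
\]

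\emph{Case 2: $\<U\>$ is virtually nilpotent and discrete.} We show $\ell(\mu_U)=0$. A discrete virtually nilpotent group acting on a pinched Hadamard manifold is elementary. It either fixes a point of $X$, fixes a unique point of $\partial X$ (parabolic type), or preserves a geodesic/pair of boundary points (loxodromic type, virtually $\Z$).

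In the first two subcases, a standard argument gives sublinear growth of orbits.
\begin{itemize}
\item In the elliptic case, the orbit of $o$ is bounded.
\item In the parabolic case, the group contains no loxodromics, so every element has translation length $0$. For a finitely generated virtually nilpotent group acting on a hyperbolic space by elliptics and parabolics, the drift, which is the $\mu$-a.s.\ limit of $d(o,w_no)/n$, vanishes. One way to see this is that a random walk with positive drift on a hyperbolic space converges to the boundary with the rate of escape realized along loxodromic elements in the semigroup.
\end{itemize}

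In the loxodromic (virtually $\Z$) subcase, the symmetry of $U$ is used. There is a homomorphism $\tau$ to $\R$ given by the signed translation along the axis, on a finite index subgroup preserving the orientation of the axis. The symmetric measure $\mu_U$ induces a walk whose displacement is, up to bounded error, a mean-zero random walk on $\R$. Hence the drift is $0$.

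\emph{Main obstacle.} The main obstacle is Case 2 in the virtually-$\Z$ subcase. We must pass to a finite index subgroup while keeping the centered behaviour, for instance via the induced (hitting) measure on the finite index subgroup, which remains symmetric and has finite first moment. Alternatively we could cite the general fact that symmetric random walks with finite first moment on amenable groups have zero drift in any isometric action. That fact follows because the drift equals $\lim \frac1n d(o,w_no)$ and is bounded by the linear escape for the word metric, which vanishes for symmetric walks on amenable groups (Kesten/Varopoulos, or Karlsson–Ledrappier). We would use this last route, since it handles all subcases uniformly and avoids the case analysis.
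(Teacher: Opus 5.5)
Your Case 1 is the paper's argument: the Margulis lemma gives $L(U)\ge\varepsilon(n,a)$, then Theorem~\ref{thm.intro_srw} applies, with $c(n,a,m)=\varepsilon(n,a)\kappa(n,a,m)$. No adjustment is needed when the infimum is not attained, since $L(U)<\varepsilon(n,a)$ already produces a point $x$ with $\max_{u\in U}d(x,ux)<\varepsilon(n,a)$. Your final route for Case 2 also has the paper's structure. You use $d(o,go)\le(\max_{u\in U}d(o,uo))\,|g|_U$ and show that the word-metric drift of $\mu_U$ on $\langle U\rangle$ vanishes.

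\textbf{The gap.} The fact you invoke for that last step is false. It is not true that symmetric random walks with finite first moment on amenable groups have zero drift. Take the simple random walk on the lamplighter group $\mathbb{Z}/2\wr\mathbb{Z}^3$. It is symmetric and finitely supported on an amenable group, but its Poisson boundary is nontrivial (Kaimanovich--Vershik). Hence it has positive entropy, and by $h\le \ell v$ it has positive linear rate of escape in the word metric. This already contradicts your claim for the isometric action on the Cayley graph.

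Kesten's theorem characterizes amenability through the spectral radius, not the drift. The Karlsson--Ledrappier corollary needs \emph{triviality of the Poisson boundary} of $(\langle U\rangle,\mu_U)$, and symmetry then forces zero drift. The same hypothesis is needed for Varopoulos's estimate in the finitely supported case.

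\textbf{The missing ingredient.} A virtually nilpotent group has polynomial growth, so every measure of finite first moment on it has zero asymptotic entropy and trivial Poisson boundary. This is exactly what the paper uses: Kaimanovich--Vershik, then Karlsson--Ledrappier. Replace ``amenable'' by ``virtually nilpotent, hence Liouville'', and your proof coincides with the paper's.

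The discarded subcase analysis can simply be dropped. In any case, the reasoning you sketched for the parabolic subcase (drift ``realized along loxodromic elements'') would not have been a valid argument.
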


Theorem \ref{thm.intro_srw} is a consequence of our general theorem (Theorem \ref{thm.intro_meta}).
 We now give another consequence in a different setting. Instead of varying the finite set $U$, we fix a finitely generated group $\Gamma$ and a random walk on $\Gamma$, and vary the isometric action of $\Gamma$ on $X$.
 
 A measure $\nu$ on $\Gamma$ is called {\bf admissible} if the support of $\nu$ generates $\Gamma$ as a semigroup.
Let $\rho: \Gamma \to \Isom(X)$ be a discrete, faithful, isometric action
of  $\Gamma$ on $X$.
Let $\ell(\nu, \rho):=\ell(\rho_\ast \nu)$ denote the drift for the pushforward $\mu=\rho_\ast \nu$.

Our next theorem gives a uniform lower bound for this drift, independent of the action $\rho$.

\begin{thm}[Corollary \ref{cor.drift_Hn}]\label{thm.intro_Hn}
Fix an integer $n \ge 2$ and a real $a\ge 1$.
Let $\Gamma$ be a finitely generated group, and 
$\nu$ an admissible probability measure on $\Gamma$ with a finite first moment. 
Let $\Sigma$ be a finite symmetric generating set of $\Gamma$ with $1 \in \Sigma$.

Then, there exists a constant $\kappa(n, a, \Gamma, \Sigma, \nu)>0$ such that the following holds.

Let $X$ be an $n$-dimensional Hadamard manifold with sectional curvature $K_X \in [-a^2, -1]$. 
For every faithful, isometric action $\rho:\Gamma \to \Isom (X)$ such that $\rho(\Gamma)$ is discrete and non-virtually-nilpotent,
we have
\[
\ell(\nu, \rho)\ge \kappa(n, a, \Gamma, \Sigma, \nu)L(\rho(\Sigma)).
\]
\end{thm}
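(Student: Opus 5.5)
The plan is to deduce Theorem \ref{thm.intro_Hn} from the same general mechanism behind Theorem \ref{thm.intro_srw}: the meta-theorem (Theorem \ref{thm.intro_meta}), which bounds the drift below by a constant times joint minimal displacement for weakly acylindrical actions on Gromov-hyperbolic spaces. The difference is that the measure is now $\rho_\ast\nu$ rather than a uniform measure on a generating set. I would therefore first reduce to a measure supported on a bounded ball of $\Sigma$ and then compare drifts.

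\textbf{Step 1 (geometric input).} $X$ is $\delta$-hyperbolic with $\delta=\delta(a)$, since $K_X\le -1$. By the Margulis lemma, with $\varepsilon=\varepsilon(n,a)$, any discrete subgroup generated by elements each moving some common point less than $\varepsilon$ is virtually nilpotent. Because $\rho(\Gamma)$ is discrete and non-virtually-nilpotent, $L(\rho(\Sigma))\ge\varepsilon(n,a)$, and the action satisfies the weak acylindricity hypotheses of Theorem \ref{thm.intro_meta} with constants depending only on $(n,a)$. The required count of elements moving two far-apart points a little is controlled by the packing bounds coming from the curvature pinching and $\varepsilon$.

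\textbf{Step 2 (comparison of measures).} Since $\nu$ is admissible, there is $k=k(\Gamma,\Sigma,\nu)$ such that every $s\in\Sigma$ has positive probability under some $\nu^{*j}$ with $j\le k$. Put $\eta=\frac1k\sum_{j=1}^k\nu^{*j}$. Then $\eta\ge c\,\mu_\Sigma$, where $c=c(\Gamma,\Sigma,\nu)>0$ is the minimum over $s\in\Sigma$ of these probabilities divided by $k|\Sigma|$. Write $\eta=c\,\mu_\Sigma+(1-c)\eta'$. I would then use a lower bound on the drift of a mixture containing $\mu_\Sigma$ with weight $c$, obtained by running the argument of Theorem \ref{thm.intro_meta} directly with the $\mu_\Sigma$-steps as the "good" increments. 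The proof of the meta-theorem presumably only needs, at a positive density of times, an independent step distributed like $\mu_U$ that produces definite progress of size $\gtrsim L(U)$ toward the boundary. Inserting such steps at rate $c$ gives $\ell(\rho_\ast\eta)\ge c'\,\kappa\,L(\rho(\Sigma))$. Finally, $\ell$ of the Cesàro-averaged measure relates to $\ell(\nu,\rho)$ by $\ell(\rho_\ast\nu^{*j})=j\,\ell(\rho_\ast\nu)$. I would therefore replace the average $\eta$ by a random-time subordination $w_{\tau_1+\dots+\tau_n}$ with $\tau_i$ i.i.d.\ uniform on $\{1,\dots,k\}$, whose drift is $\frac{k+1}{2}\ell(\nu,\rho)$ by Wald's identity. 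This yields $\ell(\nu,\rho)\ge\kappa(n,a,\Gamma,\Sigma,\nu)L(\rho(\Sigma))$.

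\textbf{Main obstacle.} The difficulty is the mixture step, because drift is not monotone under adding mass to a measure. The crux is to check that the quantitative proof of Theorem \ref{thm.intro_meta} tolerates an arbitrary independent "noise" component $\eta'$ with finite first moment. Concretely, the escape estimate (a ping-pong/Schottky-type argument among $\rho(\Sigma)$-words of bounded length moving the basepoint by at least $L$) must use only the good steps and be uniform over the interleaved noise isometries. The natural tool is the standard argument in which hyperbolicity makes the Gromov product of the trajectory with a freshly chosen Schottky direction small with probability bounded below, regardless of the current position. If that uniformity fails, I would fall back on invoking Theorem \ref{thm.intro_meta} directly for $\mu=\rho_\ast\eta$, with the joint displacement of its support $\supp\eta\supseteq\Sigma$. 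The needed inequality $L(\rho(\supp\eta))\ge L(\rho(\Sigma))$ holds by monotonicity of $L$. The price is that the constant then depends on $\eta$ through its minimal atom on $\Sigma$ and its first moment, which is still acceptable for the statement.
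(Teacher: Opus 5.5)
Your architecture matches the paper's. You verify the hypotheses of Theorem \ref{thm.intro_meta}, replace $\nu$ by a measure that charges each element of $\Sigma$ by an amount independent of $\rho$, and apply the theorem. Your Ces\`aro subordination is a valid substitute for the paper's lazy measure with a single $q$ (Lemma \ref{lemma.lazy}). The ``main obstacle'' you flag is not one: Theorem \ref{thm.intro_meta} is already stated for an arbitrary countably supported $\mu$ with $\min_{u\in U}\mu^{\ast q}(u)\ge\theta_0$. So you apply it with $U=\rho(\Sigma)$, $\mu=\rho_\ast\eta$, $q=1$ and $\theta_0=\min_{s\in\Sigma}\eta(s)$. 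In your fallback, $U$ must be $\rho(\Sigma)$, not $\rho(\supp\eta)$. Also, the constant involves no first moment; a dependence on the first moment of $\rho_\ast\eta$ on $X$ would in fact break uniformity in $\rho$.

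The real gaps are in Step 1. \emph{First}, the theorem requires $L(U^N)\ge K_0\delta+1$ with $N$ depending only on $(n,a)$. The Margulis lemma only gives $L(\rho(\Sigma))\ge\varepsilon(n,a)$, which is typically far below $K_0\delta+1$. The elementary inequality $L(U^m)\le mL(U)$ points the wrong way. The missing input is the CAT(0) lower bound $L(U^m)\ge\frac{\sqrt m}{2}L(U)$ (Lemma \ref{lemma.CAT0}). It gives $N=\lceil((2K_0\delta+2)/\varepsilon(n,a))^2\rceil$, as in Lemma \ref{lemma.independent}.

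\emph{Second}, your route to uniform weak acylindricity cannot work, because discrete groups in pinched negative curvature are not uniformly acylindrical. Bounding by packing the number of elements that move two far-apart points a little fails in several ways:
\begin{itemize}
\item A loxodromic of translation length $\ell\to 0$ has about $1/\ell$ powers moving two distant points of its axis by less than $1$.
\item Finite rotation groups of arbitrary order about a geodesic in $\mathbb H^3$ fix two arbitrarily distant points of it.
\item A parabolic subgroup yields unboundedly many elements moving two distant points deep in a horoball very little.
\end{itemize}
What is needed is an overlap bound for axes: for $h\notin E(g)$,
\[
\diam(\qa{g}\cap\nbd{h\qa{g}}{50\delta})\le(n+2)\tr{g}+Z(n,a).
\]
This comes from the Margulis lemma and CAT($-1$) geometry (Proposition \ref{prop.overlap}; \cite[Lemma 6.2]{WY} in the torsion-free case), and is converted to weak acylindricity via Lemma \ref{lemma.diam}. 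Weak acylindricity only constrains $h\notin E(g)$ and tolerates overlap linear in $\tr{g}$, which is why it holds uniformly even though acylindricity does not.

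\emph{Third}, you also need to check that the action is non-elementary, as in Lemma \ref{lemma.nonelem}:
\begin{itemize}
\item By Lemma \ref{lemma.independent.2}, $U^{2N}$ contains a hyperbolic $g$.
\item $E(g)$ is virtually cyclic, while $\rho(\Gamma)$ is not virtually nilpotent, so some $u\notin E(g)$ exists.
\item A common fixed point at infinity of $g$ and $ugu^{-1}$ would make $\diam\pi_{\qa{g}}(u\qa{g})$ infinite, contradicting weak acylindricity.
\end{itemize}
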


When $n=2$, $a=1$, namely $X=\Bbb H^2$, and $\Gamma$ is the fundamental group
of a closed orientable surface of genus at least two,
this result was proved in \cite[Theorem 3.3]{AGG}.
They also showed that the drift functional $\ell(\nu,\rho)$
is proper on Teichm\"{u}ller space.

Their proof uses a scaling limit argument and gives
no explicit value of the constant $\kappa$.
Our proof is different. It gives an explicit value
of the constant $\kappa$ and extends their result
to a much broader class of actions of finitely generated groups on Hadamard manifolds
with pinched negative curvature. See Corollary \ref{cor.drift_Hn}.

\subsection{The general theorem}

We now state our general theorem for group actions on $\delta$-hyperbolic spaces. It has two main geometric assumptions. One is weak acylindricity of the action, introduced in \cite{D} (see Section \ref{sec.action}). The other is  a uniform lower bound on the joint minimal displacement of a bounded power of the generating set.

Roughly speaking, weak acylindricity gives a uniform upper bound on the amount of overlap between a quasi-axis of a hyperbolic isometry and its translates, in terms of its translation length.

For discrete groups acting on Hadamard manifolds with pinched negative curvature, both assumptions follow from the Margulis lemma and the geometry of negatively curved spaces (see Lemma \ref{lemma.independent} and Corollary \ref{cor.Hn}). This is why they do not appear explicitly in Theorems \ref{thm.intro_srw} and \ref{thm.intro_Hn}.

\begin{thm}[Theorem \ref{thm.meta}]\label{thm.intro_meta}
Fix $\delta, \Dc \ge 0$, integers $N, q\ge 1$, and $\theta_0\in (0, 1]$.
There exists a constant $\kappa(\delta, \Dc, N, q, \theta_0)>0$ such that the following holds.

Let $(X, d)$ be a $\delta$-hyperbolic geodesic space.
Let $U$ be a finite symmetric set in $\Isom (X)$ with $1 \in U$ such that
$L(U^N) \ge K_0 \delta+1$.
Assume that the action of $\<U\>$  on $X$ is non-elementary and $\Dc$-weakly acylindrical.

Let $\mu$ be a countably supported  probability measure on $\Isom (X)$ with a finite first moment 
satisfying $\min_{u\in U}\mu^{\ast q}(u)\ge \theta_0$.

Then, we have
\[
\ell(\mu)\ge \kappa(\delta, \Dc, N, q, \theta_0) L(U).
\]
Here $K_0$ is an absolute constant, and $\mu^{\ast q}$ is the $q$-fold convolution of $\mu$.
\end{thm}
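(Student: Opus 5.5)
We would prove Theorem \ref{thm.intro_meta} in three stages: first find, inside a bounded power of $U$, two hyperbolic isometries with controlled translation lengths whose quasi-axes have bounded overlap; then turn them into a free, ping-pong-type configuration giving linear progress of the walk; and finally compare with $\mu$ through the condition $\min_{u\in U}\mu^{\ast q}(u)\ge\theta_0$.

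\emph{Step 1 (short hyperbolic elements).} From $L(U^N)\ge K_0\delta+1$ and the standard hyperbolic-geometry estimate relating joint minimal displacement to translation lengths (in a $\delta$-hyperbolic space, $L(V)$ is controlled by $\max_{v\in V^2}\tau(v)$ up to $O(\delta)$), we obtain an element $h\in U^{M}$, with $M$ depending only on $N$, such that $h$ is hyperbolic with stable translation length $\tau(h)\gtrsim L(U)$ (after normalising by $N$) and $\tau(h)\ge 10\delta$. Non-elementarity gives some $u\in U$ moving the endpoints of $h$.

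\emph{Step 2 (bounded overlap).} The $\Dc$-weak acylindricity bounds the overlap of $\qa{h}$ and $u\qa{h}$ by a quantity of order $\tau(h)$ times a constant depending on $\Dc,\delta$. Hence for a power $k=k(\delta,\Dc)$, the elements $a=h^k$ and $b=uh^ku^{-1}$ (or one of $h^k u h^k$-type words if $u$ fixes an endpoint) have axes whose overlap is much smaller than their translation lengths. Standard ping-pong then shows that for a sufficiently large power the pair generates a free semigroup acting on the orbit of $o$ quasi-isometrically: every positive word $s_1\cdots s_r$ in $\{a,b\}$ satisfies $d(x,s_1\cdots s_r x)\ge c\, r\,\tau(h)$ for a suitable basepoint $x$. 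All these words lie in $U^{R}$ with $R=R(\delta,\Dc,N)$.

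\emph{Step 3 (drift).} The condition on $\mu^{\ast q}$ gives $\mu^{\ast qR}(a),\mu^{\ast qR}(b)\ge\theta_0^{R}$. Since $\ell(\mu^{\ast p})=p\,\ell(\mu)$, it suffices to bound $\ell(\mu^{\ast qR})$ below. Write $\mu^{\ast qR}=\theta\,\tfrac12(\delta_a+\delta_b)+(1-\theta)\eta$ with $\theta=2\theta_0^R$. We then use the ping-pong (Schottky) argument of \cite{G}, in the quantitative form: each increment independently, with probability $\theta$, is a uniform choice from $\{a,b\}$. Between two such insertions the $\eta$-part is arbitrary, but the choice between $a$ and $b$ aligns with the current position with probability at least $1/2$. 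A Gromov-product estimate shows that aligned choices give a deterministic gain of order $\tau(h)$ that is not undone later, so the expected distance grows at rate at least $c(\theta_0,R)\,\tau(h)$. Combined with $\tau(h)\gtrsim L(U)$, this yields the claim, with all constants explicit.

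The main obstacle is Step 3: making the Schottky alignment argument quantitative and uniform, and independent of the uncontrolled part $\eta$, which may have huge or unbounded jumps. We would first try to adapt Gou\"ezel's pivotal-times technique with explicit constants. A second delicate point is Step 1, extracting $\tau(h)\gtrsim L(U)$ from $U^N$ uniformly. There we would invoke the Breuillard--Fujiwara estimate \cite{BF} relating $L$ to the maximal translation length in $U^2$.
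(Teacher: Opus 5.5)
\textbf{Genuine gap in Step 3.} Your Steps 1 and 2 follow the paper's route:
\begin{enumerate}[(i)]
\item the Bochi-type inequality of \cite{BF} gives a hyperbolic $g\in U^{2N}$ with $\tr{g}\ge L(U)+4000\delta+1$;
\item one takes a conjugate $ugu^{-1}$ with $u\notin E(g)$;
\item weak acylindricity bounds the overlap of the two quasi-axes.
\end{enumerate}
The problem is your claim that an aligned choice between $a$ and $b$ gives a gain ``that is not undone later''. This is false. A later increment from the uncontrolled component of $\mu^{\ast qR}$, which may be arbitrarily large, or a later badly aligned Schottky step, can backtrack past that gain. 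Controlling exactly this is what Gou\"ezel's pivotal-time argument does, and it needs a genuine $(\eta,C,D)$-Schottky set with small Schottky parameter $\eta$. With $\eta=1/100$, the local geodesic condition fails with probability $O(\eta)$, and each further level of backtracking costs a factor of order $\eta$. This is what produces the comparison walk of Proposition \ref{prop.kappa}: it steps $+1$ with probability $9/10$ and $-j$ with probability $9\cdot 10^{-(j+1)}$, so it has positive mean. A two-element set $\{a,b\}$ can never be Schottky with $\eta<1/2$: take $x=o$ and $y=ao$, so that $(ao,y)_o=d(o,ao)>C$. With only a $1/2$ chance of alignment per step, the re-randomization estimates at pivotal times give nothing, and the argument does not close.

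\textbf{The missing idea: a large Schottky set.} This is the paper's Proposition \ref{prop.schottky}. It takes the symmetric set $S=\{(u_1\cdots u_n)^{\pm r} : u_i\in\{g^m,h^m\}\}$, with $\#S=2^{n+1}\ge 200$.
\begin{itemize}
\item Distinct elements of $S$ may share prefixes, so their Gromov products at $o$ are bounded only by the prefix displacement. This is a multiple of $L(U)+1$ depending on $n,m,N$.
\item The exponent $r$ is then chosen so large that $D\ge 20C+100\delta+L(U)$, with $C$ and $D$ both proportional to $L(U)+1$.
\item The criterion of \cite{AGG}, which requires $S=S^{-1}$ rather than just positive words, then gives $\eta=2/\#S\le 1/100$.
\end{itemize}

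\textbf{Further points you leave open.} Three more ingredients are needed for uniformity.
\begin{itemize}
\item Your ``suitable basepoint'' must lie within $\Lambda=O(NL(U)+\delta+1)$ of both quasi-axes; otherwise $C$ is not $O(L(U)+1)$. The paper ensures this by choosing $o$ to almost realize $L(\{g,h\})$ (Lemma \ref{lemma.tube}).
\item You need the upper bound $\tr{g}\le 2NL(U)$ to turn the weak-acylindricity bound $\Dc(1+\tr{g})$ into $O(L(U)+1)$.
\item The pivotal framework requires $\mu^{\ast Q}\ge\theta\,\mu_S^{\ast 2}$, i.e.\ pairs $a_ib_i$ of Schottky elements rather than single Schottky steps. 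This is arranged with $Q=2Mq$ and $\theta=\theta_0^{2M}$.
\end{itemize}
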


The important point is that the constant $\kappa$ depends only on the parameters in the statement, and not on the individual  action or probability measure. This allows us to obtain uniform estimates when the actions or generating sets vary, as in Theorems  \ref{thm.intro_srw} and \ref{thm.intro_Hn}.


\subsection{Applications}
We now give applications of Theorem \ref{thm.intro_meta} to hyperbolic groups and mapping class groups. In both cases, we obtain uniform lower bounds for the drift as the generating set varies.

We first consider hyperbolic groups. Let $G$ be a hyperbolic group and $U_0$ a finite symmetric generating set of $G$. 
 For a finite symmetric set $U\subset G$, we consider the $\mu_U$-random walk on $G$. 
We use the metric on $\operatorname{Cay}(G,U_0)$ to define both the drift $\ell(\mu_U)$ of random walks on \(G\) and the joint minimal displacement $L(U)$.

\begin{thm}[Corollary \ref{cor.drift.hyp.gp}]
Fix  $\delta \ge 0$ and positive integers $m_0$ and $m$.
There exists a constant $\kappa_1(\delta, m_0, m)>0$ such that the 
following holds. 

Let $G$ be a hyperbolic group, and $U_0$ a finite symmetric generating
set of $G$ of cardinality $m_0$ such that $\Cay(G,U_0)$ is $\delta$-hyperbolic.

For every finite symmetric set $U$ in $G$ of cardinality $m$ such that $\<U\>$ is a non-elementary subgroup of $G$,  we have
\[
\ell(\mu_U) \ge \kappa_1(\delta, m_0, m) L(U).
\]
\end{thm}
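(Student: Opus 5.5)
The plan is to deduce this from Theorem \ref{thm.intro_meta}, applied with $X=\Cay(G,U_0)$ and $\mu=\mu_U$. For this we need three parameters bounded only in terms of $(\delta,m_0,m)$: a weak acylindricity constant $\Dc$, a power $N$ with $L(U^N)\ge K_0\delta+1$, and the data $q=1$, $\theta_0=1/m$. The last is automatic, since $\mu_U(u)=1/m$ for every $u\in U$. One caveat: Theorem \ref{thm.intro_meta} requires $1\in U$. If $1\notin U$, we pass to $U'=U\cup\{1\}$ and use the lazy measure $\mu_{U'}$; alternatively we keep $\mu=\mu_U$ and take $q=2$, so that $\mu_U^{\ast 2}(u)\ge 1/m^2$ for all $u\in U\cup\{1\}$. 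The second option is a bit awkward because $U$ itself need not be contained in the support of $\mu^{\ast 2}$. The first option is cleaner: $\ell(\mu_{U'})=\frac{m}{m+1}\ell(\mu_U)$ and $L(U')=L(U)$, so a bound for $U'$ transfers to $U$ with only a constant loss.

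\textbf{Step 1 (weak acylindricity).} Every group acting properly and cocompactly on a $\delta$-hyperbolic graph with vertex degree $m_0$ acts acylindrically, and the acylindricity constants can be made explicit in terms of $\delta$ and $m_0$. The reason is that the number of elements moving two far-apart points each by at most $R$ is bounded by a ball count in the Cayley graph, and that count is at most $m_0^{O(R+\delta)}$. Restricting to the subgroup $\<U\>$ preserves acylindricity, and acylindricity implies $\Dc$-weak acylindricity in the sense of \cite{D} with $\Dc$ depending only on these constants. Non-elementarity of the action of $\<U\>$ is exactly the hypothesis.

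\textbf{Step 2 (uniform displacement of a bounded power).} We must show there is $N=N(\delta,m_0,m)$ with $L(U^N)\ge K_0\delta+1$. This is a Margulis-lemma type statement for hyperbolic groups: a finite set all of whose elements move some point by at most $R$ generates a finite subgroup, or at least an elementary one, provided $R$ is small relative to the structure. Here $R$ would be about $K_0\delta+1$.

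I would argue by contradiction. Suppose $L(U^N)<K_0\delta+1$. Then there is a vertex $x$ moved by at most $K_0\delta+1$ by every element of $U^N$. For $N$ large compared with the number $B$ of vertices in a ball of radius $K_0\delta+1$ (where $B\le m_0^{K_0\delta+2}$), the following pigeonhole argument applies. The set $\{w x : w\in U^k\}$ stays in that ball for all $k\le N$. Hence the growth of the sets $U^k x$ stabilizes before step $B$, and this forces $\<U\>$ to have a bounded orbit of $x$. Concretely, if $U^{k}x=U^{k+1}x$ for some $k<N$, then $\<U\>x=U^k x$ is finite. A group with a bounded orbit is finite, which contradicts non-elementarity. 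So $N=B+1$ suffices.

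This step is the main obstacle. The care needed is in the pigeonhole: we need the sets $U^k x$ to be nested, which uses $1\in U$ (another reason for adjoining $1$). We also need their eventual equality $U^{k+1}x=U^kx$ to imply $\<U\>$-invariance. This holds because $U\cdot U^kx=U^kx$ and $U$ is symmetric.

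\textbf{Step 3 (conclusion).} Apply Theorem \ref{thm.intro_meta} with $\Dc=\Dc(\delta,m_0)$, $N=N(\delta,m_0,m)$, $q=1$ and $\theta_0=1/(m+1)$. This gives $\ell(\mu_{U'})\ge\kappa L(U)$, and therefore
\[
\ell(\mu_U)\ge \kappa_1(\delta,m_0,m)L(U)
\]
after absorbing the factor $(m+1)/m$ into the constant.
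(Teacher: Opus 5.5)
Your proposal is correct and follows the paper's route: weak acylindricity comes from the proper cocompact action, a ball-count/pigeonhole argument (using $1\in U$ and that $\langle U\rangle$ is infinite) gives $N$ depending only on $\delta,m_0$ with $L(U^N)\ge K_0\delta+1$, and the main theorem is then applied with $q=1$ (the paper goes through Theorem~\ref{thm.drift2}, which is Theorem~\ref{thm.meta} specialized to uniform measures). The differences are cosmetic: you adjoin $1$ to $U$ instead of using the lazy measure of Lemma~\ref{lemma.lazy}, and you count orbit points $U^kx$ rather than group elements $U^k$, which avoids needing $w\mapsto wx$ to be injective (note only that the near-minimizing point $x$ may lie on an edge, which costs an extra $+1$ in the radius of your ball).
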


We next consider mapping class groups acting on curve graphs, which are hyperbolic but not locally finite. Both $\ell(\mu_U)$ and $L(U)$ are defined using the metric on the curve graph.

\begin{thm}[Theorem \ref{thm.mcg}] \label{thm.intro_mcg}
Let $\mathcal S$ be a closed orientable surface of genus at least two, and let $m$ be a positive integer. 
We consider the action of the mapping class group $\Mod(\mathcal S)$ on the curve graph $\mathcal C(\mathcal S)$.
Then, there exists a constant $\kappa(\mathcal S,m)>0$
such that the following holds:

Let $U$ be a finite symmetric set in $\Mod(\mathcal S)$ of cardinality $m$ such that $\<U\>$ contains a pseudo-Anosov element 
and is not virtually cyclic.  
Then, for the uniform distribution $\mu_U$ on $U$, we have
$$\ell(\mu_U) \ge \kappa(\mathcal S,m) L(U).$$
\end{thm}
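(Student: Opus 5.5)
The strategy is to apply Theorem \ref{thm.intro_meta} with $X=\mathcal C(\mathcal S)$, $\mu=\mu_U$, $q=1$ and $\theta_0=1/m$. The hypothesis $\min_{u\in U}\mu_U(u)\ge 1/m$ is then immediate. One small point needs care: Theorem \ref{thm.intro_meta} asks for $1\in U$. If $1\notin U$, we replace $U$ by $U'=U\cup\{1\}$ and $\mu_U$ by the lazy measure $\mu'=\frac12(\delta_1+\mu_U)$. Then $\ell(\mu')=\frac12\ell(\mu_U)$ and $\mu'(u)\ge 1/(2m)$ for every $u\in U'$. Since $L(U')=L(U)$, this costs only a factor of $2$. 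So we need to check three hypotheses, with constants depending only on $\mathcal S$ and $m$: $\delta$-hyperbolicity of $X$, non-elementarity together with $\Dc$-weak acylindricity, and the bound $L(U^N)\ge K_0\delta+1$.

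\emph{Hyperbolicity and weak acylindricity.} By Masur--Minsky, $\mathcal C(\mathcal S)$ is $\delta$-hyperbolic with $\delta=\delta(\mathcal S)$; in fact it can be taken uniform. By Bowditch, the action of $\Mod(\mathcal S)$ on $\mathcal C(\mathcal S)$ is acylindrical, with constants depending only on $\mathcal S$. I expect acylindricity to imply $\Dc$-weak acylindricity of the action of every subgroup, with $\Dc=\Dc(\mathcal S)$. The argument: if two translates of a quasi-axis of a loxodromic $g$ fellow-travel for a long stretch, then acylindricity bounds the number of elements moving both endpoints of that stretch a bounded amount. Combined with the uniform positive lower bound on stable translation lengths of pseudo-Anosovs (Bowditch: some power $g^{r}$ with $r=r(\mathcal S)$ has an invariant geodesic), this bounds the overlap in terms of the translation length. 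This step should follow the same pattern as Lemma \ref{lemma.independent}, and I would cite or adapt it. Non-elementarity comes from the hypotheses. $\<U\>$ contains a pseudo-Anosov, hence a loxodromic element on $\mathcal C(\mathcal S)$. Since it is not virtually cyclic, the classification of subgroups of acylindrically hyperbolic actions (Osin) forces it to be non-elementary.

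\emph{Uniform lower bound on $L(U^N)$.} This is the main obstacle. Here $L(U)$ itself may be tiny: every element of $U$ might be reducible and fix a common curve, while only products are pseudo-Anosov. The plan has two parts.

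First, there should be $N_1=N_1(\mathcal S,m)$ such that $U^{N_1}$ contains a pseudo-Anosov element. If every word of length $\le N_1$ is reducible or periodic, a Ping-Pong/Burnside-type argument should contradict the fact that $\<U\>$ contains a pseudo-Anosov. A finiteness result of this kind is available for mapping class groups, via uniform exponential growth results of Mangahas: short pseudo-Anosovs exist in subgroups containing a pseudo-Anosov.

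Second, a pseudo-Anosov $g$ has stable translation length at least $c(\mathcal S)>0$. Take a bounded power, so that $g^{r}\in U^{rN_1}$ has translation length at least $K_0\delta+1$. Then set $N=rN_1$. Because $1\in U$, we have $U^{N}\ni g^r$, and
\[
L(U^{N})\ge \inf_x d(x,g^rx)\ge K_0\delta+1.
\]

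Finally, Theorem \ref{thm.intro_meta} with these constants, all depending only on $\mathcal S$ and $m$, yields $\ell(\mu_U)\ge \kappa(\mathcal S,m)L(U)$.
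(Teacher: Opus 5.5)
Your proposal is correct and follows essentially the paper's route. You apply the main theorem with $q=1$ and $\theta_0=1/m$ after the lazy reduction, which is exactly Theorem~\ref{thm.drift2}; you get weak acylindricity from Bowditch's acylindricity via Delzant; and, as in Lemma~\ref{lemma.mcg}, you get $L(U^N)\ge K_0\delta+1$ from Mangahas's short pseudo-Anosov theorem (whose $N_1$ in fact depends only on $\mathcal S$) plus the Masur--Minsky lower bound on translation lengths. The only difference is how you get non-elementarity: you use Osin's trichotomy for acylindrical actions, while the paper notes that $E(g)$ is virtually cyclic, picks $u\in\langle U\rangle\setminus E(g)$, and uses weak acylindricity to show that $g$ and $ugu^{-1}$ are independent.
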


As a consequence, we obtain a uniform gap at zero for the drift of random walks on mapping class groups.

\begin{cor}[Corollary \ref{cor.gap.mcg}]
Let $\mathcal S$ be a closed orientable surface of genus at least two, and 
let $m$ be a positive integer. 
Then, there exists a constant $c(\mathcal S, m)>0$ such that the following holds:

For every finite symmetric set $U$ in $\Mod(\mathcal S)$ of cardinality $m$, 
we have
\begin{enumerate}[(1)]
\item
$\ell(\mu_U) \ge c(\mathcal S, m)$, if
$\<U\>$ contains at least one pseudo-Anosov element, and $\< U\>$ is not virtually cyclic.

\item
Otherwise, $\ell(\mu_U)=0$.
\end{enumerate}

\end{cor}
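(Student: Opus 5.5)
The plan is to derive part (1) from Theorem \ref{thm.intro_mcg} together with a uniform lower bound on $L(U)$ in the curve graph, and to prove part (2) by showing that the orbit either stays bounded or grows sublinearly. For (1), Theorem \ref{thm.intro_mcg} gives $\ell(\mu_U)\ge \kappa(\mathcal S,m)L(U)$. It then suffices to prove that $L(U)\ge 1/4$ whenever $\<U\>$ contains a pseudo-Anosov element, and to set $c(\mathcal S,m)=\kappa(\mathcal S,m)/4$.

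The bound on $L(U)$ is combinatorial. The key input is that $\Mod(\mathcal S)$ acts on $\mathcal C(\mathcal S)$ by simplicial automorphisms. Suppose some point $x$ satisfies $d(x,ux)<1/4$ for every $u\in U$, with $x$ on an edge $[a,b]$ and $v$ the endpoint nearest to $x$, at distance $s\le 1/2$. We split into two cases.
\begin{enumerate}[(i)]
\item If some $u\in U$ does not map the edge $[a,b]$ to itself, then any path from $x$ to $ux$ leaves the edge through a vertex, so $d(x,ux)\ge 2s$ and hence $2s<1/4$. Consequently no $u\in U$ can flip the edge: a flip would give $d(x,ux)=1-2s>3/4$. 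So every $u$ preserving the edge fixes $v$. Every $u$ not preserving it satisfies $d(v,uv)\le d(x,ux)+2s<1/2$, and since distances between vertices are integers, $uv=v$.
\item If every $u\in U$ maps $[a,b]$ to itself, then $\<U\>$ preserves a single edge.
\end{enumerate}
In both cases $\<U\>$ has a bounded orbit in $\mathcal C(\mathcal S)$. This is impossible, because a pseudo-Anosov element acts loxodromically on the curve graph (Masur--Minsky). Hence $L(U)\ge 1/4$.

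For (2), there are two subcases. First, if $\<U\>$ contains no pseudo-Anosov element, then by the Ivanov--McCarthy classification it is finite or reducible. A reducible subgroup has a finite-index subgroup fixing a curve system, so its orbits in $\mathcal C(\mathcal S)$ are bounded, and a finite subgroup has bounded orbits trivially. Therefore $d(o,w_no)$ is bounded and $\ell(\mu_U)=0$.

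Second, suppose $\<U\>$ is virtually cyclic and contains a pseudo-Anosov element. Then it contains a finite-index infinite cyclic subgroup $\<h\>$, and its word metric is quasi-isometric to $\Z$. Since $\mu_U$ is symmetric and finitely supported on a virtually-$\Z$ group, the random walk has zero drift in the word metric. Concretely, projecting to $\Z$ (after passing to a finite-index subgroup or using the induced walk) gives a mean-zero walk with $\E|S_n|=O(\sqrt n)$. The orbit map $g\mapsto go$ is Lipschitz from the word metric to $\mathcal C(\mathcal S)$, so $\ell(\mu_U)=0$.

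The main obstacle is the simplicial case analysis in the bound $L(U)\ge 1/4$, in particular handling edge flips and confirming that the threshold $1/4$ forces a common fixed vertex. The zero-drift claim for symmetric walks on virtually cyclic groups is standard. The cleanest route is to use sublinear growth of $\E|w_n|$, via the finite-index $\Z$ and the central limit scaling.
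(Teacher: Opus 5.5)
Your proof is correct. In part (1) you reach the lower bound on $L(U)$ by a genuinely different route from the paper.

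\textbf{Part (1), paper's route.} The paper gets the bound from Lemma \ref{lemma.mcg}. A uniform short-pseudo-Anosov result puts a pseudo-Anosov in $U^{N_1}$. The Masur--Minsky lower bound on translation lengths then gives $L(U^N)\ge K_0\delta+1$ with $N=N(\mathcal S)$. Finally $L(U^N)\le N L(U)$ yields $L(U)\ge (K_0\delta+1)/N(\mathcal S)$.

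\textbf{Part (1), your route.} You use only that $\Mod(\mathcal S)$ acts by graph automorphisms on the metric curve graph. You show that $\max_{u\in U} d(x,ux)<1/4$ forces a global fixed vertex or a $\langle U\rangle$-invariant edge, which is incompatible with $\langle U\rangle$ containing a pseudo-Anosov. The case analysis checks out: the exit/entry estimate $d(x,ux)\ge 2s$, the exclusion of edge flips once $s<1/8$, and integrality of distances between vertices.

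\textbf{Comparison.} Your argument is more elementary and works for any simplicial action on a metric graph. It also gives a surface-independent bound $L(U)\ge 1/4$. The paper's route costs nothing extra, because Lemma \ref{lemma.mcg} is already needed to prove Theorem \ref{thm.mcg}, which you also invoke.

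\textbf{Part (2).} You follow the same classification as the paper. In the virtually cyclic case, the paper cites triviality of the Poisson boundary (Kaimanovich--Vershik) and vanishing drift for symmetric walks with trivial boundary (Karlsson--Ledrappier). Your hands-on central-limit argument also works, with one caveat. A virtually cyclic group need not surject onto $\Z$; modulo a finite normal subgroup it may be $D_\infty$. In that case, do the projection to $\Z$ for the induced walk on an index-two subgroup. That walk is still symmetric and has exponential moments. Then transfer the conclusion back using the linear growth of the return times. Citing the Poisson boundary results avoids this bookkeeping.
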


\subsection{Key ideas in the proofs}

Our proof has two main ingredients: a quantitative
construction of Schottky sets and a probabilistic
argument based on the pivotal time technique.

Schottky sets were introduced by Boulanger, Mathieu,
Sert, and Sisto \cite{BMSS}, who used them to establish
large deviation results for random walks on hyperbolic
spaces.

Gouëzel \cite{G} developed a systematic approach using pivotal times and Schottky sets to establish linear progress without any moment assumption, and hence positivity of drift when the first moment is finite.

A Schottky set is a finite set $S$ of isometries with
the following property: for any two points $x,y\in X$,
most elements $s\in S$ make the three points $sx,o,y$
roughly aligned along a geodesic.
The idea of the pivotal time technique is to control
backtracking by using such elements.
Roughly speaking, when a random walk backtracks,
there is a good chance that a different choice of
Schottky elements would make it progress instead.
This leads to linear progress with exponentially
high probability.

Schottky sets can be constructed by a standard
ping-pong argument. For our purpose, however, we
need a construction with uniform estimates in terms
of the joint minimal displacement $L(U)$.
We use ideas from \cite{BF} and weak acylindricity
to obtain such a construction (Proposition
\ref{prop.schottky}).
An important point is to choose the basepoint $o$
carefully, rather than arbitrarily.
Our construction gives uniform control of the word
lengths of the elements of $S$ and of the Schottky
constants in terms of $L(U)$.
This is the main geometric ingredient of our proof.

We then apply Gou\"ezel's pivotal time technique
after rescaling the metric by $L(U)$.
Using the estimates from \cite{G}, we keep track
of the dependence of the constants in the
probabilistic argument.
Combining this with our construction of Schottky
sets gives a uniform exponential bound on the
probability that the random walk fails to make
linear progress, without any moment assumption
(Theorem \ref{thm.meta}).
In particular, we obtain a uniform lower bound
for the drift when the measure has a finite
first moment.
We also obtain an exponential estimate for
translation lengths when the measure has bounded
support (Corollary \ref{cor.tr}).

Finally, we apply these results to discrete groups
acting on Hadamard manifolds with pinched negative
curvature.
By Corollary \ref{cor.Hn}, based on \cite{BF2},
such actions are weakly acylindrical, with a
constant depending only on the dimension and
curvature bounds.
The Margulis lemma and the CAT(0) geometry of
$X$ give the other required uniform estimate
for joint minimal displacement (Lemma
\ref{lemma.independent}).
Together, these facts yield Theorems
\ref{thm.intro_srw} and \ref{thm.intro_Hn}.

\bigskip

The paper is organized as follows. Section \ref{sec.pre} recalls basic notions on hyperbolic spaces. Section \ref{sec.displacement} discusses joint minimal displacement, following \cite{BF}. Section \ref{sec.action} introduces weak acylindricity, and Section  \ref{sec.schottky_main} gives our quantitative construction of Schottky sets in Proposition \ref{prop.schottky}.
In Section \ref{sec.rw}, we review Gou\"{e}zel's pivotal time technique  and obtain an exponential bound on the probability that the random walk fails to make linear progress. Section \ref{sec.drift} contains the proofs of our main results. Finally, Section \ref{sec.applications} gives applications to hyperbolic groups and mapping class groups.

\subsection*{Acknowledgments}
The authors are grateful to Emmanuel Breuillard and Inhyeok Choi for their comments. 
The first author would like to thank The Mathematical Institute 
of  the University of Oxford for hospitality.


\section{Preliminaries}\label{sec.pre}
$\N$ ($\N_0$) denotes the set of positive (non-negative) integers.

\subsection{$\delta$-hyperbolic space}
We refer the reader to \cite{BH} and \cite{GH} as general references for this subsection. 
Let $(X, d)$ be a metric space.
For $x, y, z \in X$,
we define the {\bf Gromov product} by
\[
(x, y)_z:=\frac{1}{2}(d(x, z)+d(z, y)-d(x, y)).
\]
For $\delta\ge 0$,
we say that a metric space $(X, d)$ is {\bf $\delta$-hyperbolic} 
if for all $x, y, z, w \in X$, we have
\[
(x, z)_w \ge \min\{(x, y)_w, (y, z)_w\}-\delta.
\]
We call $X$ {\bf Gromov-hyperbolic} if $X$ is 
$\delta$-hyperbolic for some $\delta$.

A metric space $(X, d)$ is called {\bf geodesic} if every pair of points in $X$ is connected by a geodesic in $X$.
A geodesic triangle is called {\bf $\Delta$-thin} 
if each side is contained in the $\Delta$-neighborhood of the union of the other two sides. If $X$ is $\delta$-hyperbolic, then every geodesic triangle in 
$X$ is $4\delta$-thin (see Lemma \ref{lemma.tripod}).
Indeed, this notion gives another equivalent definition of Gromov-hyperbolicity 
that a geodesic space $X$ is Gromov-hyperbolic if and only if there exists a constant $\Delta$ such that 
every geodesic triangle is $\Delta$-thin. 

Although some of the arguments do not require spaces to be geodesic, 
for simplicity, we assume throughout that spaces are geodesic unless stated otherwise.

We recall  the Gromov boundary of a $\delta$-hyperbolic metric space $X$.
Fix a point $o \in X$.
A sequence $(x_i)_{i\in \N_0}$ in $X$ {\bf converges at infinity} if $(x_i, x_j)_o\to +\infty$ as $i, j\to +\infty$.
Two sequences $(x_i)_{i\in \N_0}$ and $(y_i)_{i\in \N_0}$ converging at infinity are {\bf equivalent} if $(x_i, y_i)_o \to +\infty$ as $i \to +\infty$.
This defines an equivalence relation in the set of sequences converging at infinity by the $\delta$-hyperbolicity of $X$.
The set of equivalence classes is the {\bf Gromov boundary} $\partial X$ of $X$.
The definition of $\partial X$ does not depend on the choice of $o$, following from the $\delta$-hyperbolicity of $X$.
Note that by construction an action by isometries on $X$ naturally extends to 
an action by homeomorphisms on $X\cup \partial X$ when $X\cup \partial X$ is equipped with a natural topology.

\subsection{Chains}
We recall an idea from \cite{G}, which will play an important technical role
in this paper. 
For $C, D\ge 0$,
a sequence of points $x_0, \dots, x_n$ in $X$ is called a $(C, D)$-{\bf chain} 
if one has $(x_{i-1}, x_{i+1})_{x_i}\le C$
for all $0<i<n$, 
and $d(x_i, x_{i+1})\ge D$ for all $0 < i \le n$. 
We quote a basic lemma from \cite[Section 3B]{G}.

\begin{lemma}\label{lemma.chain}
Let $(X, d)$ be a $\delta$-hyperbolic space.
Let $C, R>0$.
\begin{enumerate}[(i)]
\item\label{lemma.chain1}
For all $w, x, y, z \in X$,
if $(w, y)_x\le C$, $(x, z)_y\le C+\delta$, 
and
\[
d(x, y)\ge 2C+2\delta+R,
\]
then 
$(w, z)_x\le C+\delta$.
\item\label{lemma.chain2}
For every $(C, D)$-chain $x_0, \dots, x_n$ with $D \ge 2C+2\delta +R$,
we have
\[
d(x_0, x_n) \ge \sum_{i=0}^{n-1}(d(x_i, x_{i+1})-2(C+\delta))\ge R n.
\]
\item\label{lemma.chain3}
For every $(C, D)$-chain $x_0, \dots, x_n$ with $D \ge 2C+4\delta+R$,
we have
$(x_0, x_n)_{x_i} \le C+2\delta$
for all $i=0, \dots, n$.
\end{enumerate}
\end{lemma}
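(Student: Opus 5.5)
The plan is to prove the three parts in order, with (i) as the engine and (ii), (iii) obtained by induction along the chain.

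\emph{Part (i).} Apply the four-point condition at the basepoint $x$ to the points $w,y,z$:
\[
(w,z)_x \ge \min\{(w,y)_x,(y,z)_x\}-\delta .
\]
This gives a lower bound, but we need an upper bound on $(w,z)_x$, so instead we use it to bound $(w,y)_x$ from below in terms of $(w,z)_x$ and $(y,z)_x$:
\[
C\ge (w,y)_x \ge \min\{(w,z)_x,(z,y)_x\}-\delta .
\]
Next we show that $(z,y)_x$ is large. Since $(x,y)_z+(x,z)_y=d(y,z)$ and $(y,z)_x+(x,z)_y=d(x,y)$, we get
\[
(y,z)_x=d(x,y)-(x,z)_y\ge 2C+2\delta+R-(C+\delta)=C+\delta+R>C+\delta .
\]
Hence the minimum cannot be attained by $(z,y)_x$, so it must be $(w,z)_x$, which gives $(w,z)_x\le C+\delta$.

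\emph{Part (ii).} Write $e_i=d(x_i,x_{i+1})$. By induction on $k$ we show that $(x_0,x_{k+1})_{x_k}\le C+\delta$ and
\[
d(x_0,x_{k})\ge \sum_{i<k}\bigl(e_i-2(C+\delta)\bigr)+(C+\delta).
\]
The second form is a slightly stronger inductive statement, chosen so that the induction closes.

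For the first claim, use (i) with $w=x_0$, $x=x_{k}$, $y=x_{k+1}$, $z=x_{k+2}$. The hypothesis $(x_0,x_{k+1})_{x_k}\le C+\delta$ is slightly weaker than the $C$ required in (i). To handle this, apply (i) in the reversed orientation, propagating the Gromov product at the newest point: set $w=x_0$, $x=x_{k+1}$, $y=x_k$. Then the chain condition gives $(x_k,x_{k+2})_{x_{k+1}}\le C$. It is the same lemma with the roles swapped, using the symmetry of (i) in the pairs $(w,x)$ and $(y,z)$. The induction hypothesis $(x_0,x_{k+1})_{x_k}\le C+\delta$ then yields $(x_0,x_{k+2})_{x_{k+1}}\le C+\delta$, using $d(x_k,x_{k+1})\ge D$.

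For the distance bound, use
\[
d(x_0,x_{k+1})=d(x_0,x_k)+e_k-2(x_0,x_{k+1})_{x_k}\ge d(x_0,x_k)+e_k-2(C+\delta)
\]
and sum. Finally, $e_i-2(C+\delta)\ge R$ gives the bound $Rn$.

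\emph{Part (iii).} From (ii), run in both directions, we get $(x_0,x_{i+1})_{x_i}\le C+\delta$ and $(x_n,x_{i-1})_{x_i}\le C+\delta$. We combine these by a four-point inequality at $x_i$:
\[
C+\delta\ge (x_0,x_{i+1})_{x_i}\ge \min\{(x_0,x_n)_{x_i},(x_n,x_{i+1})_{x_i}\}-\delta .
\]
It remains to show that $(x_n,x_{i+1})_{x_i}$ is large. We have $(x_n,x_{i+1})_{x_i}=d(x_i,x_{i+1})-(x_i,x_n)_{x_{i+1}}$, and $(x_i,x_n)_{x_{i+1}}\le C+\delta$ by the propagated bound applied to the reversed chain. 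So this quantity is at least $D-C-\delta\ge C+3\delta+R$, which exceeds $C+2\delta$. Hence $(x_0,x_n)_{x_i}\le C+2\delta$. The endpoint cases $i=0$ and $i=n$ are trivial.

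\emph{Main obstacle.} The main difficulty is bookkeeping the constant in (ii): the induction hypothesis only gives $C+\delta$, not $C$. Careful choice of which slot of (i) receives the chain bound $C$ is therefore essential.
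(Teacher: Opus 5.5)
Your proof is correct and is essentially the paper's proof. The paper cites Gou\"ezel's Lemmas 3.5, 3.7 and 3.8 for $R=1$ and rescales the metric by $1/R$. You run the same arguments directly for general $R$: the four-point inequality at the base point plus $(y,z)_x=d(x,y)-(x,z)_y$, then induction along the chain and along its reversal. This works since only $R>0$ is used.

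There are two small slips. First, in (ii) the assignment you wrote ($w=x_0$, $x=x_{k+1}$, $y=x_k$) does not match the hypotheses you then use. The right assignment is $w=x_{k+2}$, $x=x_{k+1}$, $y=x_k$, $z=x_0$, and with it (i) applies verbatim, so no symmetry remark is needed. Second, your ``slightly stronger'' distance statement is unnecessary, since summing $d(x_0,x_{k+1})\ge d(x_0,x_k)+e_k-2(C+\delta)$ already gives the bound. It is also false at $k=0$.
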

\proof
The parts \eqref{lemma.chain1}, \eqref{lemma.chain2}, and \eqref{lemma.chain3} are Lemmas 3.5, 3.7, and 3.8 in \cite{G}, respectively, when $R=1$.
The case when $R>0$ is reduced to the case when $R=1$ by considering the  space $(X, (1/R)d)$, which is $(\delta/R)$-hyperbolic.
\qed

\subsection{Quasi-axes and translation lengths of isometries}\label{sec.quasiaxis}

Let $(X,d)$ be a $\delta$-hyperbolic space.
For all $x, y \in X$,
let $[x, y]$ denote a geodesic path between $x$ and $y$.
Let $\Isom (X)$ denote the group of isometries of $X$.
Fix $x \in X$.
For each $g \in \Isom (X)$, we define the {\bf translation length} by
\[
\tr{g}:=\lim_{n \to +\infty}\frac{1}{n}d(x, g^n x).
\]
Here the limit always exists and does not depend on the choice of $x$.
For every positive integer $n$,
we have $\tr{g^n}=n\tr{g}$.

An isometry $g$  is called {\bf hyperbolic} if $\tr{g}>0$.
In the classical case 
$X=\Bbb H^n$, every hyperbolic isometry $g$ has a unique invariant
geodesic, called  the axis of $g$. 
For a general hyperbolic space, an invariant geodesic does not 
always exist. As a substitute, we define a bi-infinite path, called a {\bf quasi-axis}.

\begin{lemma}[Quasi-axis. Lemma 1, \cite{F}]\label{lemma.quasiaxis}
For a hyperbolic isometry $g$ of $X$,
there exists a path (continuous map) $\alpha: \R \to X$ such that
\begin{enumerate}[(i)]
\item $g(\alpha)=\alpha$ and there exists  $T>0$ such that $g(\alpha(t))=\alpha(t+T)$ for every $t \in \R$.
\item Let $t_1, t_2 \in \R$ with $t_1<t_2$.
Then every geodesic between $\alpha(t_1)$ and $\alpha(t_2)$ and the path $\alpha|_{[t_1, t_2]}:[t_1, t_2]\to X$ of $\alpha$ are in the $\Delta$-neighborhoods of each other,
where $\Delta=1672\delta$.
\end{enumerate}
Moreover, $\alpha$ can be parametrized by arc length such that $T>0$.
\end{lemma}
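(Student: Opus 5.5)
The plan is to build $\alpha$ out of a $g$-invariant piecewise geodesic, cut down to the part that stays close to long geodesics. Fix $x\in X$ and a large integer $k$ to be chosen in terms of $\delta$ and $\tr{g}$. Put $h=g^k$. Join $x$ to $hx$ by a geodesic $\sigma$ and take the concatenation $\beta=\bigcup_{j\in\Z}h^j\sigma$. This is a continuous $h$-invariant path with $h(\beta(t))=\beta(t+T')$, where $T'=d(x,hx)$. This path need not be $g$-invariant, and its corners need not be controlled, so two fixes are needed.

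\emph{Corners.} I would choose $x$ close to minimizing $d(y,hy)$, say within $\delta$ of the infimum. Then the standard estimate gives
\[
(h^{-1}x,hx)_x\le C\delta
\]
for a universal $C$. The reason is that if the Gromov product at $x$ were large, moving $x$ a distance about $(h^{-1}x,hx)_x$ along $[x,hx]$ would decrease the displacement, which contradicts near-minimality. We also have $d(x,hx)\ge\tr{h}=k\,\tr{g}$. Taking $k$ so that $k\,\tr{g}\ge 2C\delta+2\delta+R$ with $R\ge1$, the points $(h^jx)_j$ form a $(C\delta,D)$-chain. By Lemma \ref{lemma.chain}\eqref{lemma.chain3}, for any $i<j$ the geodesic $[h^ix,h^jx]$ passes within $O(\delta)$ of every intermediate $h^mx$. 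Thin triangles (every geodesic triangle is $4\delta$-thin) then show that $\beta|_{[t_1,t_2]}$ and any geodesic between its endpoints are $O(\delta)$-Hausdorff close. This also covers endpoints that lie inside segments, by an extra triangle argument.

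\emph{Invariance under $g$ rather than $h$.} Replace $\beta$ by a $g$-equivariant version. Set $y=\beta(0)=x$ and build the path through the points $g^jx$, choosing $x$ to nearly minimize the displacement of $g$ instead. The trouble is that $d(x,gx)$ may be small compared with $\delta$, so the chain argument fails for $g$ itself. The remedy is this. Let $\gamma$ be the $h$-path above. Consider the union $\bigcup_{i=0}^{k-1}g^i\beta$, which is $g$-invariant as a set. Each $g^i\beta$ is a quasi-geodesic with the same endpoints $g^{\pm\infty}$ in $\partial X$. Hence by stability they are pairwise $O(\delta)$-Hausdorff close, with the constant independent of $k$ thanks to the chain structure. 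Then define $\alpha$ on $[0,T]$ as a path from $x$ to $gx$ lying in this neighbourhood. Concretely, take $\alpha|_{[0,T]}$ to be a geodesic $[x,gx]$, extend by $\alpha(t+T)=g\alpha(t)$, and reparametrize by arc length. The closeness statement then comes from the facts that $g^jx$ lies $O(\delta)$ from $\beta$ for all $j$, and that the $g^jx$ progress monotonically along $\beta$. Monotonicity, up to an $O(\delta)$ error, follows because $d(x,g^jx)$ grows like $j\,\tr{g}$ up to $O(\delta)$.

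The main obstacle is this last point: ensuring that the short $g$-steps do not backtrack by more than $O(\delta)$, and tracking the constants through all the thin-triangle steps to reach an explicit bound like $1672\delta$. First I would prove that for near-minimal $x$ the sequence $(g^jx)$ is a $(C'\delta)$-quasi-geodesic with additive error only, using the chain lemma on the subsequence $(g^{jk}x)$. The numerical constant itself is just bookkeeping, inherited from \cite{F}.
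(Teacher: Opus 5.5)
The paper gives no proof of its own. It quotes \cite[Lemma 1]{F} and only converts the constant: \cite{F} uses the thin-triangle constant, which is $4\delta$ here by Lemma~\ref{lemma.tripod}, and $418\cdot 4\delta=1672\delta$. You try to prove the lemma from scratch. When $\tr{g}$ is large compared with $\delta$ (so $k=1$ and $h=g$), your sketch is essentially fine. Your claim that the translates $g^i\beta$ are $O(\delta)$-Hausdorff close to $\beta$, uniformly in $i$ and $k$, also holds: both paths are $h$-invariant, and a thin-quadrilateral argument on $h^{\pm n}x$, $h^{\pm n}g^ix$ with $n\to\infty$ gives it. For small $\tr{g}$, however, there is a genuine gap, exactly at the step you call the main obstacle.

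First, the base point must be the near-minimizer of the displacement of $h=g^k$ through which $\beta$ is built. It cannot be a near-minimizer for $g$, as one of your sentences suggests. Near-minimizers of $d(y,gy)$ can be arbitrarily far from any axis. In $\mathbb{H}^2$, a point $y$ at distance $r$ from the axis satisfies $\sinh(d(y,gy)/2)=\cosh r\,\sinh(\tr{g}/2)$, so $d(y,gy)\le \tr{g}+\delta$ allows $r$ of order $\log(1/\tr{g})$. The path through the $g^jy$ then stays at distance about $r$ from the axis, while long geodesics $[y,g^ny]$ pass close to it, so no uniform $\Delta$ is possible.

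Second, and more seriously, you assert but do not prove that the points $g^jx$ progress monotonically along $\beta$ with an error independent of $k$. Your justification, that $d(x,g^jx)$ grows like $j\,\tr{g}$ up to $O(\delta)$, is circular: that estimate for all $j$ is equivalent to what you need. The chain lemma applied to $(g^{jk}x)=(h^jx)$ only sees multiples of $k$. For $0<b<k$ you only know that $g^bx\in g^b\beta$ lies within $O(\delta)$ of $\beta$, which says nothing about where along $\beta$ it lies. Summing stepwise $O(\delta)$ errors does not help either, since $k$ is unbounded as $\tr{g}\to 0$.

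One way to close the gap:
\begin{enumerate}[(a)]
\item Since $g^b$ commutes with $h$, it fixes both ends of $\beta$ and moves $\beta$ within $O(\delta)$ of itself, with the constant independent of $b$. In a coarse coordinate on the quasi-line $\beta$, it therefore acts as a translation by some $c_b$, up to $O(\delta)$ uniformly.
\item Iterate this $k$ times and compare with $(g^b)^k=h^b$, which moves $x$ by $b\,\tr{h}+O(\delta)$. This gives $kc_b=b\,\tr{h}\pm k\,O(\delta)$, hence $c_b=b\,\tr{g}\pm O(\delta)$: the accumulated error is divided by $k$.
\item This places $g^bx$ within $O(\delta)$ of position $b\,\tr{g}$ on $\beta$ for every $b$, which is the coarse monotonicity you need.
\end{enumerate}

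Finally, your own construction would produce some explicit multiple of $\delta$, not necessarily $1672\delta$. The paper uses that specific value numerically later on, and it is ``inherited'' only if you cite \cite{F} together with the factor-$4$ conversion.
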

In \cite{F}, the constant $\Delta$ is  $418\delta$,
but the  $\delta$ in that paper refers to the constant of the thin-triangle condition.
Hence we take $\Delta=1672\delta$ in our setting since $X$ only satisfies 
$4\delta$-thin triangle condition (see Lemma \ref{lemma.tripod}).

We call the image of the path $\alpha$ in Lemma \ref{lemma.quasiaxis}
 a {\bf $\Delta$-quasi-axis} of $g$, denoted by $\qa{g}$.
The path $\alpha$ is parametrized by arc length.

For every isometry $g$ of $X$ and every positive integer $n$,
we have
\[
n\tr{g} \le d(x, g^n x)
\]
for all $x \in X$ by definition of the translation length.
For a hyperbolic isometry $g$, the reverse inequality holds up to an additive constant $O(\delta)$ if $x$ belongs to a quasi-axis of $g$.

\begin{lemma}[Lemma 2, \cite{F}]\label{lemma.tr}
Let $g$ be a hyperbolic isometry on a $\delta$-hyperbolic geodesic metric space with a $\Delta$-quasi-axis $\qa{g}$.
Then for every integer $n>0$ and every point $x \in \qa{g}$,
we have
\[
d(x, g^n x)-8\delta-2\Delta \le n\tr{g}.
\]
\end{lemma}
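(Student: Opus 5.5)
The plan is to compare $d(x,g^nx)$ with the arc length of the quasi-axis and then pass to the limit defining $\tr{g}$, using the parametrization from Lemma \ref{lemma.quasiaxis}. Let $\alpha$ be the arc-length parametrized path with $g\alpha(t)=\alpha(t+T)$, and write $x=\alpha(s)$. Then $g^kx=\alpha(s+kT)$ for every $k\in\Z$. The argument has three steps: show that $\tr{g}$ is essentially $T$, show that $d(x,g^nx)$ is essentially $nT$, and combine the two.

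\emph{Step 1: lower bound $\tr{g}\ge T-(\text{error})/1$.} Choose a long geodesic $[x,g^{kn}x]$ for large $k$. By Lemma \ref{lemma.quasiaxis}(ii), the subpath $\alpha|_{[s,s+knT]}$ and this geodesic lie in each other's $\Delta$-neighborhoods. Select the points $g^{jn}x$ for $j=0,\dots,k$ on the path, and project each to a point $p_j$ on the geodesic within distance $\Delta$. Then
\[
d(x,g^{kn}x)=\sum_j d(p_j,p_{j+1}),
\]
up to errors from possible backtracking of the projections. Consecutive points satisfy $d(p_j,p_{j+1})\ge d(x,g^nx)-2\Delta$. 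Monotonicity of the projections can fail only by $O(\delta)$ per step, from the thin-triangle property (Lemma \ref{lemma.tripod}, triangles are $4\delta$-thin). So I aim for
\[
d(x,g^{kn}x)\ge k\bigl(d(x,g^nx)-2\Delta-8\delta\bigr).
\]
Dividing by $k$ and letting $k\to\infty$ gives $n\tr{g}=\tr{g^n}\ge d(x,g^nx)-2\Delta-8\delta$, which is the statement.

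\emph{Step 2 (main obstacle).} The real difficulty in Step 1 is controlling the ordering of the projections $p_j$ along the geodesic. If $p_{j+1}$ could lie "behind" $p_j$, the sum would not telescope. I would handle this by a different organization. Take points $y_j$ on $[x,g^{kn}x]$ within $\Delta$ of $g^{jn}x$, and use the four-point/thin-triangle estimate to show that the Gromov products $(g^{(j-1)n}x,\,g^{(j+1)n}x)_{g^{jn}x}$ are bounded by $\Delta+4\delta$, roughly. Then apply the chain lemma (Lemma \ref{lemma.chain}(ii)) with $C\approx\Delta+4\delta$. This requires $d(x,g^nx)$ to be large compared with $C$. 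If $d(x,g^nx)$ is small, namely at most $2\Delta+8\delta$, the inequality is trivial because $\tr{g}>0$.

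\emph{Step 3: constant bookkeeping.} The chain lemma gives a subtraction of $2(C+\delta)$ per step. To land exactly on $8\delta+2\Delta$, I must check that the Gromov product at $g^{jn}x$ is at most $\Delta+3\delta$. This should follow because $g^{jn}x$ is within $\Delta$ of a geodesic point lying between the neighbours' projections. If the bookkeeping gives a slightly worse constant, I would instead use the direct projection argument from Step 1 with $4\delta$-thinness. That argument costs $2\Delta$ for the two endpoint projections plus $8\delta$ for the two thin-triangle comparisons, and this matches the stated constant.
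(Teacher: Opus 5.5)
The paper does not prove this lemma; it quotes \cite[Lemma 2]{F} and only rescales the thin-triangle constant, so your argument has to stand on its own. Its skeleton is right: bound $d(x,g^{kn}x)$ below by $k$ times $d(x,g^nx)$ minus a fixed loss, divide by $k$, and use $\tr{g^n}=n\tr{g}$. But the estimate that carries the proof is never established.

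In Step 1 you say that the points $p_j\in[x,g^{kn}x]$ backtrack by at most $O(\delta)$ per step; this is asserted, not derived. What Lemma \ref{lemma.quasiaxis}(ii) gives directly is control at scale $\Delta$. Since $g^{jn}x$ is $\Delta$-close to $[x,g^{(j+1)n}x]$, one gets $d(x,p_{j+1})\ge d(x,p_j)+d(x,g^nx)-4\Delta$. This orders the $p_j$ only when $d(x,g^nx)>4\Delta$. In the window $2\Delta+8\delta<d(x,g^nx)\le 4\Delta$ (nonempty, since $\Delta=1672\delta$) your telescoping is unjustified, and the accounting meant to produce exactly $2\Delta+8\delta$ is not derived from anything. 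Step 2 does not escape this. You bound $(g^{(j-1)n}x,g^{(j+1)n}x)_{g^{jn}x}$ through the points $y_j$ on the long geodesic, which needs $y_j$ to lie between $y_{j-1}$ and $y_{j+1}$, i.e.\ the same ordering question. Step 3 then leaves the needed constant at ``should follow'' and falls back on Step 1.

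The missing observation is that the long geodesic is not needed at all. Write $x=\alpha(s)$ and apply Lemma \ref{lemma.quasiaxis}(ii) to the parameter interval $[s+(j-1)nT,\,s+(j+1)nT]$. The point $g^{jn}x=\alpha(s+jnT)$ lies within $\Delta$ of any geodesic $[g^{(j-1)n}x,g^{(j+1)n}x]$, so the triangle inequality gives
\[
(g^{(j-1)n}x,\,g^{(j+1)n}x)_{g^{jn}x}\le\Delta .
\]
Hence $x,g^nx,\dots,g^{kn}x$ is a $(\Delta,d(x,g^nx))$-chain. If $d(x,g^nx)\ge 2\Delta+8\delta$, Lemma \ref{lemma.chain}(ii) with $C=\Delta$ and $R=6\delta$ gives $d(x,g^{kn}x)\ge k\,(d(x,g^nx)-2\Delta-2\delta)$. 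Thus $n\tr{g}\ge d(x,g^nx)-2\Delta-2\delta$, which is stronger than the statement. If $d(x,g^nx)<2\Delta+8\delta$ the claim is trivial, as you noted. When $\delta=0$, so $\Delta=0$, take $C=R=\varepsilon$ and let $\varepsilon\to0$.

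If your Step 3 phrase ``a geodesic point lying between the neighbours' projections'' referred to the geodesic joining the two neighbours, that is exactly this argument. In that case the bound is $\Delta$ with no $\delta$-term, and no bookkeeping is needed to land on $8\delta$.
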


As mentioned above, the constant $2\delta$ in \cite{F} corresponds  to $8\delta$
in the current setting.

For a subset $A$ in $X$ and $x \in X$,
let
$d(x, A):=\inf\{d(x, a) \ : \ a \in A\}$.
The following is a basic lemma which holds for a general $\delta$-hyperbolic geodesic metric space $X$.
In the statement, we assume that $\alpha$ is a quasi-axis; however, the claim holds for all $\alpha$ with the property (ii) in Lemma \ref{lemma.quasiaxis}.

\begin{lemma}\label{lemma.rectangle}
Let $\alpha$ be a $\Delta$-quasi-axis in $X$.
For all $x, y \in X$, let $x', y'$ be points in $\alpha$ such that
\[
d(x, x')= d(x, \alpha) \quad \text{and} \quad d(y, y')= d(y, \alpha).
\]
If $d(x', y')> 10\delta+2\Delta$,
then
\[
d(x, x')+d(x', y')+d(y', y)\le d(x, y)+18\delta+4\Delta.
\]
\end{lemma}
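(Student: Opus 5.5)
Fix geodesics $[x,y]$, $[x,x']$, $[y',y]$, and let $\beta$ be a geodesic $[x',y']$. The plan is to show that $[x,y]$ passes close to both $x'$ and $y'$, in the order $x'$ then $y'$; the inequality then follows by adding the pieces. We start from the quadrilateral $x,x',y',y$ and invoke the two projection conditions $d(x,x')=d(x,\alpha)$ and $d(y,y')=d(y,\alpha)$. By Lemma \ref{lemma.quasiaxis}(ii), $\beta$ lies in the $\Delta$-neighborhood of $\alpha|_{[x',y']}$ and conversely.

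\emph{Step 1 (the projection geodesic leaves $\alpha$ quickly).} Consider the triangle $x,x',y'$. Any point $p\in\beta$ has a point $a\in\alpha$ with $d(p,a)\le\Delta$, so $d(x,p)\ge d(x,a)-\Delta\ge d(x,x')-\Delta$. Hence $x'$ is almost a closest point to $x$ on $\beta$, and the Gromov product satisfies $(x,y')_{x'}\le \Delta+O(\delta)$. This is the standard argument: take the tripod point on $[x',x]$ and $[x',y']$ at distance $(x,y')_{x'}$ from $x'$. The two points are $4\delta$-close (Lemma \ref{lemma.tripod}), so $d(x,\beta)\le d(x,x')-(x,y')_{x'}+4\delta$. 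Comparing with $d(x,\beta)\ge d(x,x')-\Delta$ gives $(x,y')_{x'}\le \Delta+4\delta$. Symmetrically, $(y,x')_{y'}\le \Delta+4\delta$.

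\emph{Step 2 (chain).} Apply Lemma \ref{lemma.chain}(\ref{lemma.chain1}) to the four points $w=x$, $x'$, $y'$, $z=y$, with $C=\Delta+4\delta$ (or $C+\delta$ where needed). The hypothesis $d(x',y')\ge 2C+2\delta+R$ should be met by $d(x',y')>10\delta+2\Delta$. The lemma then gives $(x,y)_{x'}\le C+\delta$, i.e.\ $d(x,x')+d(x',y)\le d(x,y)+2C+2\delta$. Also $d(x',y)\ge d(x',y')+d(y',y)-2(y,x')_{y'}\ge d(x',y')+d(y',y)-2C$. Combining, we get
\[
d(x,x')+d(x',y')+d(y',y)\le d(x,y)+4C+2\delta = d(x,y)+18\delta+4\Delta .
\]
This matches the stated constant exactly when $C=\Delta+4\delta$, which is encouraging.

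\emph{Main obstacle.} The bookkeeping of constants. With $C=\Delta+4\delta$, Lemma \ref{lemma.chain}(\ref{lemma.chain1}) as stated requires $d(x',y')\ge 2\Delta+10\delta+R$ for some $R>0$; this is exactly the strict inequality assumed, taking $R$ small. The lemma's hypotheses are asymmetric ($C$ and $C+\delta$), and since our symmetric bound $C$ implies $C+\delta$, it applies. The subtle point is Step 1. There I would take care that closest points on $\alpha$ exist (as given in the statement) and that comparing with $\beta$ rather than $\alpha$ costs only $\Delta$. If the tripod estimate gives a worse constant, I would instead argue directly with the $\delta$-hyperbolicity four-point inequality at basepoint $x'$.
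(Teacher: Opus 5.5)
Your proof is correct and follows essentially the paper's route. The nearest-point/tripod argument gives $(x,y')_{x'}\le 4\delta+\Delta$ and $(x',y)_{y'}\le 4\delta+\Delta$, and then Lemma \ref{lemma.chain}(i) with $C=4\delta+\Delta$ plus the definition of the Gromov product yields the constant $18\delta+4\Delta$. The only cosmetic difference is that you apply the chain lemma at $x'$ to bound $(x,y)_{x'}$, whereas the paper applies it at $y'$ to bound $(x,y)_{y'}\le 5\delta+\Delta$; the bookkeeping is symmetric.
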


Note that if $\Delta=1672\delta$, then $10\delta+2\Delta\le 4000\delta$ and $18\delta+4\Delta\le 8000\delta$.
Furthermore, we note that for every $\Delta$-quasi-axis $\alpha$ and every $x \in X$, the function $\R \to [0, +\infty)$, $t\mapsto d(x, \alpha(t))$, is proper, i.e., for every real $r\ge 0$, the set of $t \in \R$ with $d(x, \alpha(t))\le r$ forms a compact subset of $\R$.
Hence there exists $x' \in \alpha$ such that $d(x, x')=d(x, \alpha)$.

To show Lemma \ref{lemma.rectangle}, we use the following classical lemma.

\begin{lemma}[The tripod lemma]\label{lemma.tripod}
Let $x, y, z$ in $X$.
For all $y' \in [x, y]$ and all $z'\in [x, z]$ such that $d(x, y')=d(x, z')\le (y,z)_x$,
we have $d(y', z')\le 4\delta$.
\end{lemma}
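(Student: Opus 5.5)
The plan is to prove the tripod lemma directly from the four-point definition of $\delta$-hyperbolicity. The idea is to bound the Gromov product $(y', z')_x$ from below by $t - 2\delta$, where $t := d(x, y') = d(x, z')$. Since
\[
d(y', z') = d(x, y') + d(x, z') - 2(y', z')_x = 2t - 2(y', z')_x,
\]
such a lower bound immediately gives $d(y', z') \le 4\delta$.

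First, I will compute the Gromov products involving points on the geodesics. Since $y' \in [x, y]$, we have $d(x, y) = d(x, y') + d(y', y)$, so
\[
(y', y)_x = \tfrac{1}{2}\bigl(t + d(x, y) - d(y', y)\bigr) = t.
\]
By the same computation, $(z', z)_x = t$. By hypothesis, $(y, z)_x \ge t$.

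Second, I will apply the four-point condition twice with basepoint $w = x$. Taking the triple $(y', y, z')$ gives
\[
(y', z')_x \ge \min\{(y', y)_x, (y, z')_x\} - \delta .
\]
Taking the triple $(y, z, z')$ gives
\[
(y, z')_x \ge \min\{(y, z)_x, (z, z')_x\} - \delta \ge t - \delta .
\]
Combining the two inequalities yields $(y', z')_x \ge \min\{t, t - \delta\} - \delta = t - 2\delta$.

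Finally, substituting this bound into the identity for $d(y', z')$ gives $d(y', z') \le 2t - 2(t - 2\delta) = 4\delta$. There is no real obstacle here. The only point needing care is to chain the four-point inequality through the intermediate point $y$ (or $z$) in the right order, so that only two losses of $\delta$ occur; this is exactly what produces the constant $4\delta$.
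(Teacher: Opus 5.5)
Your proof is correct: the identities $(y',y)_x=(z',z)_x=t$, together with two applications of the four-point condition at basepoint $x$ (first through $y$, then through $z$), give $(y',z')_x\ge t-2\delta$, and hence $d(y',z')=2t-2(y',z')_x\le 4\delta$. The paper gives no argument of its own and only cites Ghys--de la Harpe, whose proof is essentially this same four-point computation, so your route is the standard one.
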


\proof
See \cite[21.- Proposition, Chapitre 2]{GH}.
\qed

\medskip

Note that it follows that every geodesic triangle is $4\delta$-thin.

\proof[Proof of Lemma \ref{lemma.rectangle}]
First, we have $(x, y')_{x'} \le 4 \delta+\Delta$.
Indeed, otherwise, by Lemma \ref{lemma.tripod}, we have $z_1 \in [x, x']$ with $d(z_1, x')>4\delta+\Delta$,
and $z_2\in [x', y']$ with $d(z_1, z_2)\le 4\delta$.
Since $\alpha$ is a $\Delta$-quasi-axis, $d(z_2, \alpha)\le \Delta$.
Concatenating $[x, z_1]$, $[z_1, z_2]$, and a geodesic from $z_2$ to $\alpha$ yields a strictly shorter path than $[x, x']$, which is impossible.

Similarly, $(x', y)_{y'} \le 4\delta+\Delta$.
If $d(x', y')> 10 \delta+2\Delta$,
then $(x, y)_{y'}\le 5\delta+\Delta$ by Lemma \ref{lemma.chain} (i). 
By definition of the Gromov product,
we obtain
\begin{align*}
d(x, x')+d(x', y')+d(y', y)
\le d(x, y)+18\delta+4\Delta.
\end{align*}
This concludes the proof.
\qed

\section{The joint minimal displacement}\label{sec.displacement}
\subsection{Definition of $L(U)$}

Let $(X, d)$ be a $\delta$-hyperbolic geodesic metric space for $\delta\ge0$.
For a finite set $U$ of isometries in $X$,
the {\bf joint minimal displacement} is defined by
\[
L(U)=\inf_{x \in X}\max_{u \in U}d(x, ux).
\]
We note that $L(U)=L(U\cup\{1\})$,
and that $L(U)=L(U\cup U^{-1})$ \cite[Remark 7.3]{BF}. 
Some basic properties of $L(U)$ are summarized below. See \cite{BF} for a general reference.

\begin{lemma}\label{lemma.disp}
Let $U$, $V$ be finite sets of isometries of $X$.
\begin{enumerate}[(1)]
\item\label{item.UV}
If $U\subset V$, then $L(U) \le L(V)$.
\item\label{item.Un}
For every $n \in \N$,
we have
$L(U^n) \le n L(U)$.
\item\label{item.trL}
For every $u \in U$, we have $\tr{u} \le L(U)$.
\end{enumerate}
\end{lemma}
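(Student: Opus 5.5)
All three items follow directly from the definition $L(U)=\inf_{x\in X}\max_{u\in U}d(x,ux)$. The plan is to check each pointwise in $x$ and then take the infimum.

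For \eqref{item.UV}, fix $x\in X$. Since $U\subset V$, the maximum over $U$ is taken over a subset of the index set used for the maximum over $V$. Hence
\[
\max_{u\in U}d(x,ux)\le \max_{v\in V}d(x,vx).
\]
Taking the infimum over $x$ on both sides gives $L(U)\le L(V)$.

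For \eqref{item.Un}, fix $x\in X$ and an element $w=u_1\cdots u_n\in U^n$ with $u_i\in U$. Set $x_0=x$ and $x_k=u_1\cdots u_k x$, so that $x_n=wx$. By the triangle inequality and the fact that $u_1\cdots u_{k-1}$ is an isometry,
\[
d(x,wx)\le\sum_{k=1}^n d(x_{k-1},x_k)=\sum_{k=1}^n d(x,u_kx)\le n\max_{u\in U}d(x,ux).
\]
Maximizing over $w\in U^n$ and then taking the infimum over $x$ gives $L(U^n)\le nL(U)$.

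For \eqref{item.trL}, fix $u\in U$ and any $x\in X$. Recall that $\tr{u}$ does not depend on the base point, and that $n\tr{u}\le d(x,u^nx)$ for every $n$; alternatively, use the telescoping argument above with $u^n$ in place of $w$. Either way,
\[
\tr{u}\le \frac1n d(x,u^nx)\le d(x,ux)\le \max_{v\in U}d(x,vx).
\]
Taking the infimum over $x$ gives $\tr{u}\le L(U)$.

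There is no real obstacle here. The only point needing care is that the translation length is independent of $x$, which is stated in Section \ref{sec.quasiaxis}; this lets us take the infimum over $x$ freely in \eqref{item.trL}.
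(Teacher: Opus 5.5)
Your proof is correct and matches the paper's argument: (1) comes straight from the definition, (2) from the triangle inequality via the telescoping sum, and (3) from $\tr{u}\le d(x,ux)$ for every $x$. The paper's $U^n$ means products of \emph{at most} $n$ elements, and your bound in (2) covers shorter products as well, since $k\max_{u\in U}d(x,ux)\le n\max_{u\in U}d(x,ux)$ for $k\le n$.
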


\proof
 \eqref{item.UV} follows from the definition of $L$.
\eqref{item.Un} follows from the triangle inequality, cf.\ \cite[Claim 3, Section 2.1]{BF}.
We have \eqref{item.trL}  since we have $\tr{u}\le d(x, u x)$ for every $x\in X$ and for every $u \in \Isom (X)$.
\qed

\subsection{The case of $\delta$-hyperbolic spaces}

For $U$ and $n\ge 1$, let $U^n$ denote the set of elements that 
are products of at most $n$ elements of $U$. 
We define
\[
\lambda(U):=\max_{u \in U} \tr{u} 
\quad \text{and} \quad
\lambda_2(U):= \max \left\{\lambda(U), \frac{1}{2} \lambda(U^2)\right\}.
\]

We recall a basic inequality, called geometric Bochi-type inequality, proved in \cite{BF}.

\begin{thm}[Theorem 1.4, \cite{BF}]\label{thm.bochi}
There is an absolute constant $K>0$ such that the following holds.
Let $X$ be a $\delta$-hyperbolic geodesic space and $U \subset \Isom (X)$ a finite 
set. 
Then, we have
\[
L(U) - K\delta \le \lambda_2(U).
\]
\end{thm}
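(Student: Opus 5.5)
The plan is to compare $L(U)$ with translation lengths of elements of $U\cup U^2$ by looking at an almost-optimal point. First reduce to $U$ symmetric containing $1$: $L$ is unchanged by \cite[Remark 7.3]{BF}, and $\lambda$, $\lambda_2$ are unchanged because $\tr{u^{-1}}=\tr{u}$ and $(U\cup U^{-1})^2$ consists of inverses and products already controlled by the same quantities. Write $L=L(U)$; we may assume $L>K\delta$, since otherwise there is nothing to prove. Fix $x\in X$ with $\max_{u\in U}d(x,ux)\le L+\delta$, and let $U_{\rm big}=\{u\in U: d(x,ux)\ge L-C\delta\}$ for a suitable absolute constant $C$.

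The first key step is a \emph{minimality lemma}: there exist $a,b\in U_{\rm big}$, with possibly $a=b$, such that
\[
(a^{-1}x,\,bx)_x\le C'\delta .
\]
I would prove it by contradiction. Suppose all Gromov products $(u^{-1}x,vx)_x$ with $u,v\in U_{\rm big}$ exceed $C'\delta$. Because $U$ is symmetric, this includes the pairs $(ux,u^{-1}x)$. Iterating the four-point condition then shows all the points $ux$, $u\in U_{\rm big}$, lie in one common direction from $x$ up to distance about $C'\delta-O(\delta)$.

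Next, take a geodesic $[x,u_0x]$ and let $y$ be the point on it at distance $t=\min\{(C'-O(1))\delta,\,L/4\}$ from $x$. By the tripod lemma (Lemma \ref{lemma.tripod}), $y$ is $4\delta$-close to the corresponding points on $[x,ux]$ and on $[x,u^{-1}x]$. Hence $uy$ is close to the point at distance $t$ from $ux$ on $[ux,x]$. This gives
\[
d(y,uy)\le d(x,ux)-2t+O(\delta)\quad\text{for } u\in U_{\rm big}.
\]
For $u\notin U_{\rm big}$ we have $d(y,uy)\le d(x,ux)+2t<L-C\delta+2t$. Choosing $C'$, and then $C\gg C'$, makes the maximum over $U$ at $y$ strictly less than $L$, which contradicts the definition of $L$.

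The second step is the ping-pong estimate. Given the lemma, consider the sequence of points
\[
x,\ ax,\ abx,\ abax,\ (ab)^2x,\ \dots
\]
Its consecutive Gromov products are $(x,abx)_{ax}=(a^{-1}x,bx)_x$ and $(ax,abax)_{abx}=(b^{-1}x,ax)_x$. The second is of the same form after swapping the roles of $a$ and $b^{-1}$ (again using symmetry), so I would run the minimality lemma to get both bounds at once: pick the pair to control both products. The steps have length at least $L-C\delta$, which is larger than $2C'\delta+4\delta$ once $L>K\delta$. So this is a $(C'\delta,\,L-C\delta)$-chain, and Lemma \ref{lemma.chain}(ii) gives
\[
d(x,(ab)^nx)\ge 2n\bigl(L-C\delta-2(C'+1)\delta\bigr).
\]
Dividing by $n$ and letting $n\to\infty$ yields $\tr{ab}\ge 2L-K\delta$. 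If $a=b$, the same chain argument applied to the sequence $x,ax,a^2x,\dots$ gives $\tr{a}\ge L-K\delta$. In either case $\lambda_2(U)\ge L-K\delta$.

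The main obstacle is the minimality lemma, specifically arranging the smallness of both alternating Gromov products simultaneously. The contradiction argument must produce a single direction that is shared by all of $ux$ and $u^{-1}x$ for $u\in U_{\rm big}$. I expect this to require a careful chaining of the $\delta$-inequality across the finite set, which is harmless because the constants must not depend on $|U|$. One uses only pairwise comparisons with a fixed reference point $u_0x$, so the losses stay $O(\delta)$.
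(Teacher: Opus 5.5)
The paper gives no proof of this statement (it is quoted from \cite{BF}), so your argument is judged on its own. The overall strategy is sound, and your pushing argument (moving $x$ a few multiples of $\delta$ towards $u_0x$) correctly shows that the points $u^{\pm1}x$, $u\in U_{\rm big}$, cannot all leave $x$ in one common direction. The gap is in passing from this lemma to an element of $U\cup U^2$, and it has two linked sources.

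First, the reduction to symmetric $U$ is false. $\lambda_2$ is not invariant under $U\mapsto U\cup U^{-1}$, because $(U\cup U^{-1})^2$ contains mixed products $u^{-1}v$, $uv^{-1}$. These are neither inverses nor conjugates of elements of $U\cup U^2$. For example, let $u,v$ be rotations of $\mathbb{H}^2$ by the same angle $\theta\in(0,\pi)$, in the same sense, about distinct points. Then $\lambda_2(\{u,v\})=\frac12\tr{uv}$, while the trace formula gives $\tr{u^{-1}v}>\tr{uv}$ as soon as $u^{-1}v$ is hyperbolic. So, run on the symmetrized set, your argument only proves $L(U)-K\delta\le\lambda_2(U\cup U^{-1})$; indeed your lemma may return $a=u^{-1}$, $b=v$. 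Comparing $\lambda_2(U\cup U^{-1})$ with $\lambda_2(U)$ up to $O(\delta)$ is essentially the theorem itself.

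Second, the chain $x,ax,abx,abax,\dots$ needs both $(a^{-1}x,bx)_x$ and $(b^{-1}x,ax)_x$ to be small, whereas your lemma controls only one Gromov product. ``Pick the pair to control both products'' is not an argument. A small $(a^{-1}x,bx)_x$ is compatible with $b^{-1}x$ and $ax$ fellow-travelling for a long time, in which case the sequence backtracks at $abx$.

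The missing idea is a case distinction with a second threshold, which also makes symmetrization unnecessary. Fix $C_1\ge C'+3$ and apply your pushing argument directly to the family $\{ux,u^{-1}x : u\in U_{\rm big}\}$; this is exactly what that argument uses. It yields two members $p\neq q$ of the family with $(p,q)_x\le C'\delta$.

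\emph{Case 1.} Some $u\in U_{\rm big}$ has $(ux,u^{-1}x)_x\le C_1\delta$. Then the chain $x,ux,u^2x,\dots$ gives $\tr{u}\ge L-O(\delta)$.

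\emph{Case 2.} Otherwise, $p\in\{ux,u^{-1}x\}$ and $q\in\{vx,v^{-1}x\}$ with $u\neq v$. Applying the four-point condition twice along $p,\,u^{-1}x,\,vx,\,q$ gives $(p,q)_x\ge\min\{(p,u^{-1}x)_x,(u^{-1}x,vx)_x,(vx,q)_x\}-2\delta$. Each outer term either exceeds $C_1\delta$ or is a full distance $\ge L-C\delta$. Since $C_1-2>C'$, the minimum must be the middle term, so $(u^{-1}x,vx)_x\le(C'+2)\delta$. The same argument along $p,\,ux,\,v^{-1}x,\,q$ gives $(v^{-1}x,ux)_x\le(C'+2)\delta$.

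Hence $x,ux,uvx,uvux,\dots$ is a $((C'+2)\delta,\,L-C\delta)$-chain, and Lemma \ref{lemma.chain} (ii) gives $\frac12\tr{uv}\ge L-O(\delta)$ with $uv\in U^2$, as required. When $\delta=0$, take $x$ with $\max_u d(x,ux)\le L+\varepsilon$ rather than $L+\delta$.
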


The following is a direct consequence of Theorem \ref{thm.bochi}.
Set 
$$K_0:=2K+4000,$$ where $K$ is the absolute constant in Theorem \ref{thm.bochi}.

\begin{lemma}\label{lemma.independent.2}

Let $X$ be a $\delta$-hyperbolic geodesic space. 
Let $U \subset \Isom(X)$ be a finite set 
with 
\[
L(U) \ge K_0\delta+1.
\]
Then there exists a hyperbolic isometry $g \in U^2$ such that 
\[
\tr{g} \ge L(U)+4000\delta+1.
\]
\end{lemma}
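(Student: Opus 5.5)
The plan is to derive the statement directly from the geometric Bochi-type inequality (Theorem \ref{thm.bochi}), together with the homogeneity $\tr{g^n}=n\tr{g}$. The constant $K_0=2K+4000$ has been chosen precisely so that doubling a translation length of size roughly $L(U)-K\delta$ leaves a surplus of at least $4000\delta+1$ over $L(U)$.

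First, apply Theorem \ref{thm.bochi} to $U$ to get $\lambda_2(U)\ge L(U)-K\delta$. Since $\lambda_2(U)$ is a maximum of two quantities, we split into two cases according to which one realizes it.

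\emph{Case 1: $\lambda_2(U)=\lambda(U)$.} Then there is $u\in U$ with $\tr{u}\ge L(U)-K\delta$. Set $g:=u^2$. It is a product of two elements of $U$, so $g\in U^2$. Moreover
\[
\tr{g}=2\tr{u}\ge 2L(U)-2K\delta .
\]

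\emph{Case 2: $\lambda_2(U)=\frac12\lambda(U^2)$.} Then there is $h\in U^2$ with $\tr{h}\ge 2L(U)-2K\delta$. Set $g:=h$.

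In both cases we have found $g\in U^2$ with $\tr{g}\ge 2L(U)-2K\delta$. We now rewrite this as
\[
2L(U)-2K\delta = L(U)+\bigl(L(U)-2K\delta\bigr).
\]
The hypothesis gives $L(U)\ge K_0\delta+1=2K\delta+4000\delta+1$. Hence $L(U)-2K\delta\ge 4000\delta+1$, and so
\[
\tr{g}\ge L(U)+4000\delta+1 .
\]
In particular $\tr{g}\ge 1>0$, so $g$ is hyperbolic.

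There is no real obstacle here, since all the geometric content is already in Theorem \ref{thm.bochi}. The only points needing care are that $U^2$ is meant as products of at most two elements of $U$, so that $u^2\in U^2$, and that the maximum defining $\lambda_2$ is attained by some element, which holds because $U$ and $U^2$ are finite.
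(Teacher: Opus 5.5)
Your proof is correct and follows the paper's argument: apply Theorem \ref{thm.bochi} to obtain $g\in U^2$ with $\tr{g}\ge 2L(U)-2K\delta$ (replacing $u\in U$ by $u^2$ if needed), then use $K_0=2K+4000$ to get $\tr{g}\ge L(U)+4000\delta+1>0$. Your case split is harmless but unnecessary, since $\tr{u^2}=2\tr{u}$ already gives $\frac12\lambda(U^2)\ge\lambda(U)$, so your Case 2 always applies.
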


\proof
By Theorem \ref{thm.bochi}, we have $L(U) - K\delta \le \lambda_2(U)$, namely,
there is an element $g \in  U^2$ such that 
$L(U) - K\delta \le \tr{g}/2$.
(Consider $g^2$ if necessary for $g\in U$.)

Since $L(U)\ge (2K+4000) \delta+1$, 
this element  $g \in U^2$ satisfies 
\[
\tr{g} \ge 2L(U)-2K\delta \ge L(U)+4000\delta+1.
\]
Since $\tr{g}>0$, the element $g$ is a hyperbolic isometry on $X$. 
\qed

\subsection{The case of Hadamard manifolds with pinched negative curvature}\label{sec.hadamard}

In the case when a $\delta$-hyperbolic space $X$ is ${\rm CAT}(0)$,
we have more detailed information regarding the joint minimal displacement.
A geodesic metric space $(X, d)$ is ${\rm CAT}(0)$ if and only if every geodesic triangle on $x, y, z$ satisfies the following inequality: 
\[
2d(x, m)^2\le d(x, y)^2+d(x, z)^2-\frac{1}{2}d(y, z)^2,
\]
where $m$ is the midpoint of $[y, z]$.
A simply connected Riemannian manifold with non-positive sectional curvature is a ${\rm CAT}(0)$ space.
See \cite{BH} for a general reference. 
We quote a lemma.

\begin{lemma}[Lemma 1.7, \cite{BF}]\label{lemma.CAT0}
If $X$ is a ${\rm CAT}(0)$ metric space and $U$ a finite set of isometries, 
then for every $m\in \N_0$, we have
\[
L(U^m)\ge \frac{\sqrt{m}}{2}L(U).
\]
\end{lemma}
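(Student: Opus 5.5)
The plan is a telescoping argument on the circumradii of the orbit sets $U^k x$, using the strong convexity of the squared distance in a ${\rm CAT}(0)$ space. Since $L(U)=L(U\cup\{1\})$, I first replace $U$ by $U\cup\{1\}$. Then $U^k\subset U^{k+1}$ and $1\in U^k$ for every $k\ge 0$, where $U^0=\{1\}$. The case $m=0$ is trivial, so assume $m\ge 1$. Fix $x\in X$ and set $B_k:=U^k x$, a finite set. Let $r_k$ be the circumradius of $B_k$, that is, the infimum of the radii of closed balls containing $B_k$. Let $c_k$ be a circumcenter. Note that $r_0=0$.

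The first step is to recall the basic ${\rm CAT}(0)$ circumcenter inequality, which follows from the midpoint inequality quoted above. If $B$ is bounded with circumradius $r$ and circumcenter $c$, and $B\subset \overline{B}(p,R)$, then
\[
R^2\ \ge\ r^2+\tfrac14 d(p,c)^2 .
\]
To see this, apply the midpoint inequality to each $b\in B$ with the midpoint $q$ of $[p,c]$. This gives $d(b,q)^2\le \tfrac12R^2+\tfrac12 r^2-\tfrac14 d(p,c)^2$. Minimality of $r$ forces $\sup_b d(b,q)\ge r$, and the inequality follows. The version with $\tfrac14$ suffices and yields exactly the constant $\tfrac12$ in the statement. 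If $X$ is not complete, I will either pass to the completion, which is still ${\rm CAT}(0)$ and on which isometries extend, or work with $\varepsilon$-approximate centers and let $\varepsilon\to 0$.

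The second step is the key comparison. For each $u\in U$ we have $B_k\subset B_{k+1}$ and $uB_k\subset B_{k+1}$. The set $uB_k$ has circumradius $r_k$ and circumcenter $uc_k$. Apply the inequality above with $p=c_{k+1}$ and $R=r_{k+1}$, once to $B_k$ and once to $uB_k$. This gives $d(c_{k+1},c_k)\le 2\sqrt{r_{k+1}^2-r_k^2}$ and likewise $d(c_{k+1},uc_k)\le 2\sqrt{r_{k+1}^2-r_k^2}$. By the triangle inequality,
\[
L(U)\ \le\ \max_{u\in U} d(c_k,uc_k)\ \le\ 4\sqrt{r_{k+1}^2-r_k^2},
\]
so $r_{k+1}^2-r_k^2\ge L(U)^2/16$. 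Summing over $k=0,\dots,m-1$ gives $r_m^2\ge mL(U)^2/16$.

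The third step compares $r_m$ with displacement. Since $B_m\subset \overline{B}(x,\max_{w\in U^m}d(x,wx))$, we get $r_m\le \max_{w\in U^m}d(x,wx)$. Taking the infimum over $x$ yields $L(U^m)\ge \tfrac{\sqrt m}{4}L(U)$.

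To reach the stated $\tfrac{\sqrt m}{2}$, I will sharpen the second step. Instead of the triangle inequality through $c_{k+1}$, apply the circumcenter inequality directly to the midpoint of $[c_k,uc_k]$. The $(1/2)d(y,z)^2$ term in the midpoint inequality then yields the bound $d(c_k,uc_k)^2\le 4(r_{k+1}^2-r_k^2)$. I expect this constant-chasing, together with the existence and handling of circumcenters in the possibly incomplete setting, to be the only delicate point. The telescoping structure itself is robust.
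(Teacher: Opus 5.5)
You have the right architecture. Replacing $U$ by $U\cup\{1\}$, telescoping the squared circumradii $r_k^2$ of $U^kx$, and bounding $r_m\le\max_{w\in U^m}d(x,wx)$ is the standard route. (The paper itself only quotes this lemma from Breuillard--Fujiwara.) As written, however, your argument proves only $L(U^m)\ge\frac{\sqrt m}{4}L(U)$, and the proposed sharpening to $\frac{\sqrt m}{2}$ does not work.

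First, a slip: your midpoint computation actually gives $R^2\ge r^2+\frac12 d(p,c)^2$, not $\frac14$. Your remark that the $\frac14$ version ``yields exactly the constant $\frac12$'' is contradicted by your own third step. Even with $\frac12$, step 2 only gives $d(c_k,uc_k)^2\le 8(r_{k+1}^2-r_k^2)$, i.e.\ the constant $\sqrt{m/8}$.

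Routing through the midpoint $q$ of $[c_k,uc_k]$ gains nothing. The midpoint inequality at $c_{k+1}$ reads
\[
\frac14 d(c_k,uc_k)^2\le \frac12 d(c_{k+1},c_k)^2+\frac12 d(c_{k+1},uc_k)^2-d(c_{k+1},q)^2 .
\]
Fed with your bounds on $d(c_{k+1},c_k)$ and $d(c_{k+1},uc_k)$, this returns exactly the triangle-inequality estimate. Applying the circumcenter inequality at $q$ itself would need an upper bound on $\sup_{b\in U^kx}d(b,q)$, which you do not have.

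The missing ingredient is the sharp circumcenter inequality. If $B$ has circumradius $r$ and circumcenter $c$, and $R:=\sup_{b\in B}d(b,p)$, then $R^2\ge r^2+d(p,c)^2$ for every $p$. It follows from the full CAT(0) comparison along $\gamma=[p,c]$:
\[
d(b,\gamma(t))^2\le(1-t)\,d(b,p)^2+t\,d(b,c)^2-t(1-t)\,d(p,c)^2 .
\]
Take the supremum over $b\in B$ and use $\sup_b d(b,\gamma(t))\ge r$. This gives $(1-t)r^2\le(1-t)R^2-t(1-t)d(p,c)^2$, i.e.\ $R^2\ge r^2+t\,d(p,c)^2$ for all $t\in(0,1)$; now let $t\to1$.

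Equivalently, you can bootstrap your own midpoint argument. If $\sup_b d(b,p)^2\ge r^2+\alpha\,d(p,c)^2$ holds for all $p$, applying it at the midpoint of $[p,c]$ upgrades $\alpha$ to $(1+\alpha)/2$. Starting from $\alpha=0$, this gives $\alpha=1$ in the limit.

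With the sharp inequality, your step 2 unchanged gives $d(c_k,uc_k)\le 2\sqrt{r_{k+1}^2-r_k^2}$. Hence $r_{k+1}^2-r_k^2\ge L(U)^2/4$ and $r_m\ge\frac{\sqrt m}{2}L(U)$, which is the stated constant. Do this in the completion of $X$, where exact circumcenters exist; with only $\varepsilon$-approximate centers, the limit $t\to1$ forces you to choose $\varepsilon$ after $t$.

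For the paper's only use of this lemma, in Lemma \ref{lemma.independent}, your weaker constant would in fact suffice after enlarging $N$. It does not, however, prove the statement as given.
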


Given $n \in \N$ and $a \in [1, +\infty)$, 
let $X$ be an $n$-dimensional Hadamard manifold with pinched sectional curvature $K_X \in [-a^2, -1]$.
Note that the hyperbolicity constant $\delta$ of $X$ is at most the one for $\H^2$, see \cite[1.2 Proposition, Chapter III.H]{BH}.

We obtain a lemma from the lemma above and  the Margulis lemma.

\begin{lemma}\label{lemma.independent}
Fix $n\in \N$ and $a \in [1, +\infty)$.
Let $X$ be an $n$-dimensional Hadamard manifold with sectional curvature $K_X \in [-a^2, -1]$.
Let $\delta$ be the hyperbolicity constant of $\Bbb H^2$, so that $X$ 
is $\delta$-hyperbolic. 

Then, there exists a constant $N=N(n, a)$ such that the following holds:
For every finite set $U$ in $\Isom(X)$ with $1 \in U$ such that $\<U\>$ is discrete and 
non-virtually-nilpotent, 
we have
\[
L(U^N) \ge K_0 \delta+1.
\]
Here $K_0$ is the absolute constant in Lemma \ref{lemma.independent.2}.
\end{lemma}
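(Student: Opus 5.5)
The argument combines the Margulis lemma, which gives $L(U) \ge \varepsilon(n,a)$, with Lemma \ref{lemma.CAT0}, which lets powers of $U$ amplify joint displacement. Let $\varepsilon=\varepsilon(n,a)>0$ be a Margulis constant for $n$-dimensional Hadamard manifolds with $K_X\in[-a^2,-1]$. The Margulis lemma says that for every $x\in X$ and every discrete subgroup $\Gamma\le \Isom(X)$, the group generated by $\{\gamma\in\Gamma : d(x,\gamma x)<\varepsilon\}$ is virtually nilpotent.

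\textbf{Step 1: $L(U)\ge\varepsilon$.} Suppose $L(U)<\varepsilon$. Then there is $x\in X$ with $\max_{u\in U}d(x,ux)<\varepsilon$. Every $u\in U$ then lies in the set $\{\gamma\in\<U\> : d(x,\gamma x)<\varepsilon\}$. The Margulis lemma, applied to the discrete group $\Gamma=\<U\>$, shows that the group these elements generate is virtually nilpotent. But that group is $\<U\>$ itself, contradicting the hypothesis. Hence $L(U)\ge\varepsilon$. The only care needed is the strict versus non-strict inequality in the infimum, and the definition of $L$ as an infimum handles it: if the infimum is $<\varepsilon$, some point achieves a maximum $<\varepsilon$.

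\textbf{Step 2: amplify via $\mathrm{CAT}(0)$.} Since $X$ is a Hadamard manifold, it is $\mathrm{CAT}(0)$. Lemma \ref{lemma.CAT0} gives, for every $m\in\N$,
\[
L(U^m)\ge \frac{\sqrt m}{2}L(U)\ge \frac{\sqrt m}{2}\varepsilon .
\]
Choose $N=N(n,a)$ to be the least integer with $\frac{\sqrt N}{2}\varepsilon\ge K_0\delta+1$, for instance $N=\lceil 4(K_0\delta+1)^2/\varepsilon^2\rceil$. Here $\delta$ is the fixed hyperbolicity constant of $\H^2$ and $K_0$ is absolute, so $N$ depends only on $n$ and $a$.

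Since $1\in U$, the notion of $U^m$ as products of at most $m$ elements agrees with the set of products of exactly $m$ elements, so the two conventions for $U^N$ cause no ambiguity. We also need $X$ to be $\delta$-hyperbolic with $\delta$ the constant of $\H^2$; this is the comparison fact cited just before the statement. I expect no real obstacle. The only points requiring care are the precise form of the Margulis lemma for variable pinched curvature, with $\varepsilon$ depending only on $(n,a)$, and the fact that it applies to arbitrary discrete subgroups and not only torsion-free ones.
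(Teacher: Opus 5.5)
Your proof is correct and follows the paper's argument: the Margulis lemma gives $L(U)\ge\varepsilon(n,a)$, and Lemma~\ref{lemma.CAT0} then amplifies this bound. Your choice $N=\lceil 4(K_0\delta+1)^2/\varepsilon^2\rceil$ is exactly the paper's $\lceil ((2K_0\delta+2)/\varepsilon)^2\rceil$, and you spell out the short deduction of $L(U)\ge\varepsilon$ from the standard form of the Margulis lemma, which the paper leaves to the citation.
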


\proof
Let $\e(n, a)$ denote the Margulis constant of $X$.
The Margulis lemma states that if $\<U\>$ is discrete and non-virtually-nilpotent,
then $L(U)\ge \e(n, a)$, see \cite[Section 8.3]{BGS}.
By Lemma \ref{lemma.CAT0},
we have
\[
L(U^m)\ge \frac{\sqrt{m}}{2}L(U) \ge \frac{\sqrt{m}}{2}\e(n, a),
\]
for every $m \in \N_0$.

Letting 
\[
N:=\left\lceil\left(\frac{2K_0\delta+2}{\e(n, a)}\right)^2 \right\rceil,
\]
we obtain the claim since $N$ depends only on $n$ and $a$.
Here, we denote by $\cl{r}$ the smallest integer greater than or equal to $r \in \Bbb R$.
\qed

\section{Weak acylindricity of group actions}\label{sec.action}

We recall that two hyperbolic isometries $g$ and $h$ on a $\delta$-hyperbolic space $X$ are called
{\bf independent} if the sets of fixed points of $g$ and $h$ in $\partial X$ are disjoint.
Let $G$ be a (countable) subgroup of $\Isom (X)$.
We call the isometric action of $G$ on $X$ {\bf non-elementary} if $G$ contains two hyperbolic elements $g$ and $h$ that are independent. 
Equivalently, $G$ has no bounded orbit in $X$, no fixed point in $\partial X$, and no invariant two-point subset of $\partial X$; see \cite[Proposition 3.1]{CCMT}
(they call it {\it general type}).

For a hyperbolic isometry $g \in G$, the {\bf elementary closure} of $g$ in $G$
is defined  by
\[
E(g):=\big\{h \in G \ : \ d_{\rm Haus}(h\qa{g}, \qa{g})<+\infty\big\}.
\]
Here $d_{\rm Haus}$ denotes the Hausdorff distance and $\qa{g}$ is a $\Delta$-quasi-axis of $g$.
The definition of $E(g)$ is independent of the choice of $\qa{g}$ and the constant $\Delta$ as two quasi-axes of a hyperbolic isometry are within a bounded Hausdorff distance.
Note that $E(g)$ is a subgroup of $G$, and that $E(g) \neq G$ if $G$ is non-elementary.

\subsection{Weak acylindricity}
We recall that an isometric action of $G$ on $X$ is called {\bf acylindrical} \cite{Btight}
if there exist $N, K$ such that for all pair of points $x,y$ with $d(x,y) \ge K$, 
the set $\{ g\in G \ : \  d(gx,x) + d(gy,y) \le 1000 \delta\}$ has cardinality at most $N$.
In particular, a properly discontinuous and cocompact action is acylindrical since the number of elements $g$ with $d(gx,x) \le 1000 \delta$ is 
uniformly bounded in $x$.

The {\bf weak acylindricity} of a group action  on a hyperbolic space
was defined by Delzant \cite[Definition 6]{D}.

For a ($\Delta$-)quasi-axis $\alpha$ in $X$ and  $x \in X$,
we define the projection $\pi_\alpha(x)$ of $x$ to $\alpha$
as the set of points $a \in \alpha$ such that $d(x, a)=d(x, \alpha)$.
For a subset $A$ in $X$, we denote its diameter by $\diam A$.

\begin{definition}[Weak acylindricity]\label{def.wa}
An isometric action by a group $G$ on a $\delta$-hyperbolic space $X$ 
 is called {\it weakly acylindrical} if
there exists a constant $\Dc > 0$ such that for every hyperbolic element $g \in G$, 
and every $h \not \in E(g)$, we have
\[
\diam \left(\pi_{\qa{g}}(h\qa{g})\right) \le \Dc\left(1+\tr{g}\right).
\]
We call it $\Dc$-weakly acylindrical if we want to make the constant $\Dc$ explicit.  
\end{definition}

The definition involves a choice of $\Delta$-quasi-axis.
Although the constant $\Dc$ may depend on $\Delta$, the property itself is independent of both $\Delta$ and a choice of quasi-axis, up to increasing $\Dc$ by  a constant depending only on $\delta$.
This is a consequence of elementary $\delta$-hyperbolic geometry.

One can say that weak acylindricity is a uniform version of the WWPD property in \cite[Definition 4.18]{BBF}.
Note that an acylindrical action on a hyperbolic geodesic space is weakly acylindrical, such that the weak acylindricity constant 
$\Dc$ depends only on $\delta$ and the constants $N,K$ in the definition of acylindrical action (see \cite[Proposition 5]{D} and its proof).

A non-trivial and important class of examples of weak acylindrical actions is the following. 
The action of a discrete group, $G$, of isometries in a Hadamard manifold with pinched negative curvature is $\Dc$-weakly acylindrical such that the constant $\Dc$ depends only on the dimension and the curvature bound.  See Corollary \ref{cor.Hn}.

\subsection{Projections and neighborhoods of quasi-axes}
In a $\delta$-hyperbolic space, the projection to a quasi-axis
can be described using a neighborhood of the quasi-axis. 
For a subset $A$ in $X$ and $r\ge 0$,
let $\nbd{A}{r}$ denote the closed $r$-neighborhood of $A$ in $X$.
By convention the diameter of the empty set is $0$.
In the following inequalities, diameters are allowed to be infinite.

\begin{lemma}\label{lemma.diam}
Let $\alpha$, $\beta$ be $\Delta$-quasi-axes of some hyperbolic isometries in a $\delta$-hyperbolic geodesic metric space $X$.
For every $R\ge 0$, we have
\[
\diam(\alpha\cap \nbd{\beta}{R}) \le \diam \pi_\alpha(\beta) +4R.
\]
Conversely, we have
\[
\diam \pi_\alpha(\beta) \le \diam(\alpha\cap \nbd{\beta}{20\delta+2\Delta})+10\delta+2\Delta.
\]
\end{lemma}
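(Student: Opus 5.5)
We prove the two inequalities separately. Both rest on the same estimate: the nearest-point projection to a $\Delta$-quasi-axis is coarsely well behaved. For any $x$ and any $x'\in\pi_\alpha(x)$, the proof of Lemma \ref{lemma.rectangle} gives $(x,y')_{x'}\le 4\delta+\Delta$ for every $y'\in\alpha$ far from $x'$. So a geodesic from $x$ to a far point of $\alpha$ passes within $O(\delta+\Delta)$ of $x'$. We use this repeatedly, together with Lemma \ref{lemma.tripod} and the fact that $\alpha$ and geodesics between its points are $\Delta$-close (Lemma \ref{lemma.quasiaxis}(ii)).

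\emph{First inequality.} Let $a,b\in\alpha\cap\nbd{\beta}{R}$. Choose $a_1,b_1\in\beta$ with $d(a,a_1)\le R$ and $d(b,b_1)\le R$, and pick $a_2\in\pi_\alpha(a_1)$ and $b_2\in\pi_\alpha(b_1)$. Since $a\in\alpha$, we have $d(a_1,a_2)=d(a_1,\alpha)\le d(a_1,a)\le R$, so $d(a,a_2)\le 2R$; similarly $d(b,b_2)\le 2R$. Hence
\[
d(a,b)\le d(a_2,b_2)+4R\le \diam\pi_\alpha(\beta)+4R .
\]
Taking the supremum over $a,b$ gives the claim. This step is purely metric and needs no hyperbolicity.

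\emph{Second inequality.} Take $x,y\in\beta$ with projections $x',y'\in\alpha$. If $d(x',y')\le 10\delta+2\Delta$ there is nothing to prove. Otherwise we must show that a large part of $[x',y']\cap\alpha$ lies in $\nbd{\beta}{20\delta+2\Delta}$.
\begin{enumerate}[(1)]
\item By Lemma \ref{lemma.rectangle}, $d(x,x')+d(x',y')+d(y',y)\le d(x,y)+18\delta+4\Delta$.
\item Hence $(x,y)_{x'}$ and $(x,y)_{y'}$ are at most $O(\delta+\Delta)$. By the tripod lemma, the points $x',y'$ lie within roughly $9\delta+2\Delta$ of a geodesic $[x,y]$.
\item By thin triangles (applied to the quadrilateral $x,x',y',y$), every point of $[x',y']$ at distance more than $9\delta+2\Delta$ from both endpoints lies within $8\delta$ of $[x,y]$.
\item Since $x,y\in\beta$ and $\beta$ is a $\Delta$-quasi-axis, $[x,y]$ lies in the $\Delta$-neighbourhood of $\beta$.
\item Points of $\alpha$ between $x'$ and $y'$ lie within $\Delta$ of $[x',y']$.
\end{enumerate}
Combining (3)--(5), the points of $\alpha$ near the middle of $[x',y']$ lie in $\nbd{\beta}{20\delta+2\Delta}$ if the constants are tracked carefully. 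In particular there are points $p,q\in\alpha\cap\nbd{\beta}{20\delta+2\Delta}$ with $d(p,x'),d(q,y')\le 5\delta+\Delta$. Therefore
\[
d(x',y')\le d(p,q)+10\delta+2\Delta .
\]

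\emph{Main obstacle.} The hard part is the bookkeeping of constants, so that the neighbourhood radius is exactly $20\delta+2\Delta$ and the loss is $10\delta+2\Delta$. To do this I would redo the estimate more carefully. Apply the tripod lemma directly to the triangles $(x',x,y)$ and $(y',y,x)$ using $(x,y')_{x'}\le 4\delta+\Delta$. This places the point of $[x',y']$ at distance $5\delta+\Delta$ from $x'$ within about $8\delta+\Delta$ of $[x,y]$. Then the $\Delta$-closeness of $[x,y]$ to $\beta$ and of $[x',y']$ to $\alpha$ absorbs the remaining terms.
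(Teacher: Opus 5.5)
Your first inequality is correct and is exactly the paper's argument: each point of $\alpha\cap\nbd{\beta}{R}$ lies within $2R$ of $\pi_\alpha(\beta)$.

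\textbf{The gap is in the second inequality.} As sketched, your argument does not give the stated constants, and it fails exactly at the bookkeeping you flag as the main obstacle.
\begin{itemize}
\item Your witness $p$ comes from a point $c\in[x',y']$ at distance $5\delta+\Delta$ from $x'$, after which you move back to $\alpha$. That move costs up to another $\Delta$. So you only get $d(p,x')\le 5\delta+2\Delta$, and the additive loss becomes $10\delta+4\Delta$, not $10\delta+2\Delta$. The claim $d(p,x')\le 5\delta+\Delta$ is asserted, not derived. With your estimate $d(c,[x,y])\lesssim 8\delta+\Delta$, the radius would also come out as $8\delta+3\Delta$.
\item Step (2) is too lossy. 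Lemma \ref{lemma.rectangle} used as a black box gives only $(x,y)_{x'}\le 9\delta+2\Delta$. The tripod lemma then gives $d(x',[x,y])\le 13\delta+2\Delta$, not ``roughly $9\delta+2\Delta$''. Hence $d(x',\beta)\le 13\delta+3\Delta$, which exceeds $20\delta+2\Delta$ once $\Delta>7\delta$, e.g.\ for $\Delta=1672\delta$.
\end{itemize}

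\textbf{The missing idea} is that you need no auxiliary points: the projection points $x',y'$ already lie on $\alpha$, and they are themselves close to $\beta$.
\begin{itemize}
\item The proof of Lemma \ref{lemma.rectangle} gives $(x,y')_{x'}\le 4\delta+\Delta$ and $(x',y)_{y'}\le 4\delta+\Delta$.
\item When $d(x',y')>10\delta+2\Delta$, Lemma \ref{lemma.chain}(i), applied in both directions, upgrades these to $(x,y)_{x'}\le 5\delta+\Delta$ and $(x,y)_{y'}\le 5\delta+\Delta$.
\item The tripod lemma then gives $d(x',[x,y])\le 9\delta+\Delta$. Since $[x,y]$ is $\Delta$-close to $\beta$, you get $d(x',\beta)\le 9\delta+2\Delta$, and likewise for $y'$.
\end{itemize}
The paper does essentially this with the triangles on $a_1,a_2,b_1$ and on $b_1,a_2,b_2$, obtaining $12\delta+2\Delta$ and $9\delta+2\Delta$.

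Hence $x',y'\in\alpha\cap\nbd{\beta}{20\delta+2\Delta}$, so $d(x',y')\le\diam(\alpha\cap\nbd{\beta}{20\delta+2\Delta})$ with no additive loss at all. The term $10\delta+2\Delta$ is needed only for pairs with $d(x',y')\le 10\delta+2\Delta$. With this change your argument goes through.
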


\proof
We prove the first inequality.
We assume that $\alpha\cap \nbd{\beta}{R}\neq \emptyset$ since otherwise the inequality holds.
For every $x \in \alpha \cap \nbd{\beta}{R}$, 
let $y \in \beta$ with $d(x, y) \le R$.
Since $d(y, \pi_\alpha(y)) \le R$,
we have $x \in \nbd{\pi_\alpha(\beta)}{2R}$.
Hence
\[
\alpha \cap \nbd{\beta}{R} \subset \nbd{\pi_\alpha(\beta)}{2R},
\]
implying the first inequality.

We show the second inequality.
Let $a_1, a_2\in \alpha$. 
Further, let $b_1, b_2\in \beta$ be points such that $a_i \in \pi_{\alpha}(b_i)$, $i=1, 2$.
We consider a geodesic rectangle on $a_1, a_2, b_2, b_1$.

Suppose $d(a_1, a_2)>10\delta+2\Delta$.
As in the proof of Lemma \ref{lemma.rectangle}, we have
\[
(b_1, a_2)_{a_1} \le 4\delta+\Delta \quad \text{and} \quad (b_1, b_2)_{a_2} \le 5\delta+\Delta.
\]
Thus,
by definition of the Gromov product, we obtain
\[
(a_1, b_1)_{a_2}=d(a_1, a_2)-(b_1, a_2)_{a_1} \ge d(a_1, a_2)-4\delta-\Delta>5\delta+\Delta\ge (b_1, b_2)_{a_2}.
\]
Consider geodesic triangles on $a_1, a_2, b_1$, and on $b_1, a_2, b_2$.
By using the tripod lemma (Lemma \ref{lemma.tripod}), 
we have
\[
d(a_1, \beta) \le 12 \delta+2\Delta \quad \text{and} \quad d(a_2, \beta)\le 9\delta+2\Delta.
\]
Thus, $a_1, a_2 \in \alpha \cap \nbd{\beta}{12\delta+2\Delta}$.
Therefore, if $\diam \pi_\alpha(\beta)>10\delta+2\Delta$,
then
$\diam \pi_\alpha(\beta) \le \diam (\alpha \cap \nbd{\beta}{20\delta+2\Delta})$.
The desired second inequality follows by
combining the case when $\diam \pi_\alpha(\beta)\le 10\delta+2\Delta$,
completing the proof.
\qed

\subsection{Discrete group actions on Hadamard manifolds}

We discuss important examples of group actions on Hadamard manifolds that are weakly acylindrical in some uniform sense. 
Given $n \in \N$ and $a \in [1, +\infty)$,
let $X$ be an $n$-dimensional Hadamard manifold with  sectional curvature $K_X \in [-a^2, -1]$.
On an Hadamard manifold, a hyperbolic isometry $g$ has a unique  invariant geodesic, see
\cite{BH}, and 
we take this geodesic as a quasi-axis of $g$. 
Hence $\Delta=0$ in the definition of a $\Delta$-quasi-axis.

We use the following result  from \cite{BF2}, a manuscript in preparation.
Here  $G<\Isom(X)$ being discrete means that the action of $G$ on $X$
is properly discontinuous. 
The proof relies on the Margulis lemma and the geometry of CAT($-1$) spaces.

\begin{prop}[\cite{BF2}]\label{prop.overlap}
There exists a constant $Z>0$ that depends only on $n$ and $a$ with
the following property. 
Let $G < \Isom(X)$ be a discrete group on $X$, 
and $g\in G$ hyperbolic.   
For every $h\in G$ such that $ h \not\in E(g)$, 
we have
\[
\diam \left(\qa{g} \cap \nbd{h\qa{g}}{50\delta}\right) \le (n+2) \tr{g} + Z.
\]
\end{prop}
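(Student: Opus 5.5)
We prove Proposition \ref{prop.overlap} by contradiction, combining the Margulis lemma with comparison geometry in the pinched Hadamard manifold $X$. Write $\alpha=\qa{g}$, the unique $g$-invariant geodesic, and $\beta=h\alpha$, which is the axis of $hgh^{-1}$ and has the same translation length $\tau:=\tr{g}$. Assume that $I:=\alpha\cap\nbd{\beta}{50\delta}$ has diameter greater than $(n+2)\tau+Z$. Since $\alpha$ and $\beta$ are geodesics in a CAT($-1$) space, convexity of the distance function shows that $I$ is a segment of $\alpha$. Along $I$, the geodesic $\beta$ stays within $50\delta$ of $\alpha$, and in its interior (away from the ends by $O(\delta)$) it actually stays within a much smaller distance $\epsilon'$. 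This follows because $d(\alpha(t),\beta)$ is convex and, by curvature $\le -1$, decays exponentially away from the ends of the overlap. Enlarging $Z$ by a constant depending only on $n$ and $a$, we may therefore assume that $\alpha$ and $\beta$ are $\epsilon'$-close along a subsegment $J$ of length at least $(n+2)\tau+Z'$.

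The next step is to produce many elements with small displacement at a common point. Take the point $x$ at the midpoint of $J$ and consider the elements $g^i$ and $g_h^j:=hg^jh^{-1}$ for $|i|,|j|\le n+1$, or rather the commutator-type words $g^{-i}g_h^{\,i}$. Each $g^{-i}g_h^i$ moves $x$ by a small amount, since $g_h^i$ translates along $\beta$ approximately as $g^i$ translates along $\alpha$. The error comes from the closeness of $\alpha$ and $\beta$ near $x$ and near $g^ix$, together with the rotational parts of $g$ and $g_h$. The rotational parts are the delicate point, and this is where the pigeonhole on $n+2$ translates enters. Consider the holonomy (rotational) components of $g$ and $g_h$ in $O(n-1)$. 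By a pigeonhole/Dirichlet argument in the compact group $O(n-1)$, whose covering number at a fixed scale is bounded in terms of $n$, we find a power $1\le k\le K(n)$ with $g^k$ and $g_h^k$ nearly pure translations. The point of the $(n+2)\tau$ term in the statement is that the overlap is long enough to accommodate such a power together with its neighborhoods. We then arrange that both $g^k$ and $g^{-k}g_h^k$ displace $x$ by less than the Margulis constant $\e(n,a)$. Here we use that the $\epsilon'$-fellow-travelling of $\alpha$ and $\beta$ over a length $\gg k\tau$ implies that parallel transport along them agrees up to small error, by Jacobi field estimates using $K_X\ge -a^2$.

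With these elements in hand, the Margulis lemma \cite[Section 8.3]{BGS} says that the group generated by elements displacing $x$ by less than $\e(n,a)$ is virtually nilpotent. So $\langle g, hgh^{-1}\rangle$ (after passing to the powers above, which is harmless for virtual nilpotence of discrete groups) is virtually nilpotent. A discrete virtually nilpotent group containing a hyperbolic isometry of a pinched negatively curved manifold is elementary, so it fixes the endpoints of $\alpha$. Hence $hgh^{-1}$ has axis $\alpha$, so $\beta=\alpha$ and $h\in E(g)$, a contradiction.

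The main obstacle is the second step: controlling the rotational parts uniformly and making the power $k$, and the resulting constants, depend only on $n$ and $a$ (giving the linear coefficient $n+2$ and the constant $Z$). This is especially hard when $\tau$ is tiny, since then one must instead use many powers $g^i$ with $i\tau$ up to the Margulis scale. I would first try treating the case $\tau<\e(n,a)$ separately. There, $g$ itself has small displacement along its axis, so the thin part near $\alpha$ is a Margulis tube, and $h$ mapping a long piece of it close to itself forces $h$ to normalize the virtually nilpotent stabilizer, giving $h\in E(g)$ directly.
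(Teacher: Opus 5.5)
The paper does not prove this proposition (it is quoted from the manuscript \cite{BF2}), so your argument has to stand on its own. There is a genuine gap: it breaks at its central step. You want a power $k$ such that both $g^k$ and $g^{-k}g_h^k$ move the midpoint $x$ of $J$ by less than $\varepsilon(n,a)$, and you locate the difficulty in the rotational parts. But $x$ lies on the axis of $g$, so $d(x,g^kx)=k\tau$. In fact $d(y,g^ky)\ge \tr{g^k}=k\tau$ for every $y\in X$, whatever the holonomy of $g$ is, since a ``nearly pure translation'' still translates by $k\tau$. Hence as soon as $\tau\ge\varepsilon(n,a)$, no nontrivial power of $g$ or of $g_h$ can be one of the small-displacement generators in the Margulis lemma, at any point. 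This is exactly the regime in which the term $(n+2)\tr{g}$ matters. The holonomy is also a red herring for the difference elements. Since $x$ is $\epsilon'$-close to both axes, $d(x,g^{-i}g_h^{i}x)=d(g^ix,g_h^ix)=O(\epsilon')$ as long as $g^ix$ stays in $J$, with no rotational error (use $g^ig_h^i$ if $\beta$ runs opposite to $\alpha$ along $J$). Moreover, a Dirichlet argument in $O(n-1)$ would produce powers of size exponential in $n^2$, not the coefficient $n+2$. You also have the hard case backwards. If $\tau<\varepsilon(n,a)/2$, then $g$ and $g_h$ both move $x$ by at most $\tau+2\epsilon'<\varepsilon(n,a)$, so the Margulis lemma directly makes $\langle g,g_h\rangle$ virtually nilpotent, contradicting the independence of $g$ and $g_h$. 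No Margulis-tube analysis is needed there.

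What is missing is a mechanism that uses only elements of small displacement (for large $\tau$ these are the difference elements) and then brings $g$ back in. For instance, put $\gamma:=g^{-1}g_h$ and let $J_0$ be the set of $y\in J$ with $gy\in J$. Then $\gamma$ moves every point of $J_0$ by $O(\epsilon')$, so $g^j\gamma g^{-j}$ moves every point of $g^jJ_0$ by $O(\epsilon')$. Also $\bigcap_{j=0}^{n+1}g^jJ_0\neq\emptyset$ as soon as $J$ is longer than $(n+2)\tau$, which is a natural source of the coefficient $n+2$. Let $H_j:=\langle g^i\gamma g^{-i} : 0\le i\le j\rangle$. The Margulis lemma then only tells you that $H_{n+1}$ is virtually nilpotent, which is not yet a contradiction. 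One way to finish is as follows. Discrete virtually nilpotent subgroups of $\Isom(X)$ have Hirsch length at most $n-1$, and $gH_jg^{-1}\subset H_{j+1}$. So for some $j$ both $H_j$ and $gH_jg^{-1}$ have finite index in $H_{j+1}$, and then $g$ preserves the limit set of $H_j$. If $H_j$ is infinite, this forces $g_h=g\gamma$ to preserve a nonempty subset of the fixed points of $g$, which by discreteness contradicts $h\notin E(g)$. Finite (torsion) $H_j$ need a separate argument. Nothing in your proposal plays this role, so as written it does not prove the proposition when $\tr{g}\ge\varepsilon(n,a)$.
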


As a direct application of this proposition, we obtain the following.
This is known under the assumption that $G$ is torsion free
\cite[Lemma 6.2]{WY}.

\begin{cor}[Weak-acylindricity]\label{cor.Hn}
Let $n \in \N$ and $a \in [1, +\infty)$.
Let $X$ be an $n$-dimensional Hadamard manifold $X$ with sectional curvature $K_X\in [-a^2, -1]$.
Let $G <\Isom(X)$ be a discrete subgroup. 

Then there exists a constant $\Dc=\Dc(n, a)$ such that the action of $G$ on $X$ is $\Dc$-weakly acylindrical.
\end{cor}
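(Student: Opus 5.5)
We will combine Proposition \ref{prop.overlap} with the second inequality of Lemma \ref{lemma.diam}. On $X$ the quasi-axis of a hyperbolic isometry $g$ is its unique invariant geodesic, so $\Delta=0$. Here $\delta$ is the hyperbolicity constant of $\H^2$; $X$ is $\delta$-hyperbolic since $K_X\le -1$.

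Fix a hyperbolic element $g\in G$ and some $h\notin E(g)$. Set $\alpha=\qa{g}$ and $\beta=h\qa{g}$. The translate $\beta$ is the invariant geodesic of the hyperbolic isometry $hgh^{-1}$, so it is a $0$-quasi-axis. This lets us apply Lemma \ref{lemma.diam} with $\Delta=0$, which gives
\[
\diam \pi_\alpha(\beta)\le \diam\bigl(\alpha\cap\nbd{\beta}{20\delta}\bigr)+10\delta .
\]
Since $20\delta\le 50\delta$, the set $\alpha\cap\nbd{\beta}{20\delta}$ is contained in $\alpha\cap\nbd{\beta}{50\delta}$. Proposition \ref{prop.overlap} bounds the diameter of the latter by $(n+2)\tr{g}+Z$. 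Hence
\[
\diam \pi_{\qa{g}}(h\qa{g})\le (n+2)\tr{g}+Z+10\delta\le \Dc\,(1+\tr{g})
\]
with $\Dc:=\max\{n+2,\ Z+10\delta\}$. This constant depends only on $n$ and $a$, because $Z=Z(n,a)$ and $\delta$ is absolute.

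One point needs checking: Definition \ref{def.wa} uses the elementary closure $E(g)$ defined via Hausdorff distance of quasi-axes, and Proposition \ref{prop.overlap} must refer to the same notion. Since both use $\qa{g}$ as the invariant geodesic, they agree, and no case distinction is needed. The only real content is Proposition \ref{prop.overlap}, which we assume. The remaining step is to note that Lemma \ref{lemma.diam} applies to the two geodesics $\alpha$ and $\beta$ regarded as $0$-quasi-axes, where the nearest-point projection is well defined by properness of the distance function along the geodesic.
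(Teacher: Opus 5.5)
Your proof is correct and is essentially the paper's argument. Apply the second inequality of Lemma \ref{lemma.diam} with $\Delta=0$, bound the overlap by Proposition \ref{prop.overlap} using $\nbd{h\qa{g}}{20\delta}\subset\nbd{h\qa{g}}{50\delta}$ (a step the paper leaves implicit), and take $\Dc=\max\{n+2,\,Z+10\delta\}$.
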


\proof
Let $\delta$ be a hyperbolicity constant of $\mathbb H^2$.
Then $X$ is $\delta$-hyperbolic.
As we said, since the axis of a hyperbolic isometry of $X$ is
a geodesic, we may take $\Delta=0$ in Lemma \ref{lemma.diam}.

Let $g\in G$ be hyperbolic and $h\in G\setminus E(g)$.
By Lemma \ref{lemma.diam} and Proposition \ref{prop.overlap}, we have
\[
\begin{aligned}
\diam\pi_{\qa g}
  (h\qa g)
&\le
\diam \left(
  \qa g \cap
  \nbd{h\qa g}{20\delta} \right)
  +10\delta\\
&\le (n+2) \tr g+Z+10\delta.
\end{aligned}
\]
Setting
$
\mathcal D=\max\{n+2,Z+10\delta\},
$
we obtain the claim. Note that $\mathcal D$ depends
only on $n$ and $a$.
\qed

\medskip

We record an immediate consequence of Corollary \ref{cor.Hn},
which is also a well-known fact.

\begin{lemma}\label{lemma.nonelem}
Keep the same setting as in Corollary \ref{cor.Hn}.
For every finite set $U$ in $\Isom(X)$,
if $\<U\>$ is discrete and non-virtually-nilpotent, 
then the action of 
$\<U\>$ is non-elementary on $X$.
\end{lemma}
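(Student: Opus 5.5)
We need to show that if $\<U\>$ is discrete and non-virtually-nilpotent, then its action on $X$ is non-elementary, i.e.\ $\<U\>$ contains two independent hyperbolic isometries. The plan is to first produce one hyperbolic element, and then use weak acylindricity to rule out the possibility that the whole group preserves its axis.

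\emph{Step 1: a hyperbolic element exists.} By Lemma \ref{lemma.independent} (applied to $U\cup\{1\}$, which generates the same group and satisfies $L(U\cup\{1\})=L(U)$), there is $N=N(n,a)$ with $L(U^N)\ge K_0\delta+1$. Lemma \ref{lemma.independent.2} applied to the finite set $U^N$ then gives a hyperbolic isometry $g\in U^{2N}\subset \<U\>$.

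\emph{Step 2: some element lies outside $E(g)$.} Here $G=\<U\>$ and $E(g)$ is taken in $G$. Suppose for contradiction that $E(g)=G$. Then every $h\in G$ moves the axis $\qa{g}$ within bounded Hausdorff distance. Since $X$ is CAT$(-1)$ after rescaling, the axis of $g$ is the unique geodesic joining the two fixed points $g^{\pm}\in\partial X$. Hence every $h\in G$ preserves the pair $\{g^+,g^-\}$, and so maps $\qa{g}$ onto itself.

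It follows that $G$ acts on the line $\qa{g}\cong\R$, giving a homomorphism $G\to\Isom(\R)$. Its kernel $K$ fixes $\qa{g}$ pointwise. By discreteness (proper discontinuity), $K$ is finite, since it fixes a point. The image of $G$ is a discrete subgroup of $\Isom(\R)$ containing a translation, so it is $\Z$ or the infinite dihedral group. Therefore $G$ is virtually $\Z$, which contradicts the assumption that $G$ is non-virtually-nilpotent. So there is some $h\in G\setminus E(g)$.

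\emph{Step 3: an independent hyperbolic element.} Set $g'=hgh^{-1}$. This is hyperbolic with axis $h\qa{g}$ and fixed points $h g^{\pm}$. I will show that $\{hg^\pm\}\cap\{g^\pm\}=\emptyset$.

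Suppose some endpoint is shared, say $hg^{+}=g^{+}$. Then $\qa{g}$ and $h\qa{g}$ are geodesics asymptotic at that endpoint. In pinched negative curvature such rays converge, so they come exponentially close. Consequently $\qa{g}\cap\nbd{h\qa{g}}{20\delta}$ has infinite diameter. By Lemma \ref{lemma.diam} (with $\Delta=0$), this bounds $\diam\pi_{\qa g}(h\qa g)$ from below, so that projection has infinite diameter. But Corollary \ref{cor.Hn} says the action is $\Dc$-weakly acylindrical, so this diameter is at most $\Dc(1+\tr{g})<\infty$, a contradiction. The case $hg^{-}=g^{-}$ is symmetric.

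The remaining cases are the cross-coincidences, such as $hg^+=g^-$ with $hg^-\ne g^+$. This is handled by the same argument, because an axis shared at either end yields infinite overlap regardless of orientation. Thus $g$ and $g'$ are independent hyperbolic elements of $\<U\>$, and the action is non-elementary.

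The main obstacle is Step 2. One must check carefully that $E(g)=G$ forces $G$ to preserve the geodesic axis, and that discreteness then gives virtual cyclicity. The key point is that in a Hadamard manifold the axis is canonical, being the unique geodesic between $g^\pm$, so no coarse argument is needed there.
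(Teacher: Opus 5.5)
Your proof is correct and follows the same route as the paper. You obtain a hyperbolic $g$ from Lemmas \ref{lemma.independent} and \ref{lemma.independent.2}, use discreteness to see that $E(g)$, which preserves $\Ax(g)$, is virtually cyclic, so some element of $\<U\>$ lies outside $E(g)$, and then invoke weak acylindricity (Corollary \ref{cor.Hn}) to show that $g$ and its conjugate have disjoint fixed-point sets. You supply details the paper leaves implicit: applying Lemma \ref{lemma.independent} to $U\cup\{1\}$, the finite-kernel argument for virtual cyclicity, and the passage from a shared endpoint to an infinite-diameter projection via Lemma \ref{lemma.diam}.
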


\proof
By Lemmas \ref{lemma.independent} and \ref{lemma.independent.2}, $\<U\>$ contains a hyperbolic element $g$. Let $E(g)$ be the elementary closure in $\<U\>$. 
Then, $E(g)$ is virtually cyclic since it preserves $\Ax(g)$
and $E(g)$ is discrete.
It implies that $\< U\> \not\subset E(g)$
since $\<U\>$ is not virtually-nilpotent. 

Let $u \in \<U \> \backslash E(g)$. 
Set $h=ugu^{-1} \in \< U\>$, which is hyperbolic. Then $g$ and $h$ are independent. 
Indeed, otherwise, $g$ and $h$ fix a common point in $\partial X$.
Then, $\diam \pi_{\Ax(g)} (u \Ax(g))$ is infinite. 
This contradicts Corollary \ref{cor.Hn} since it in particular says that 
$\diam \pi_{\Ax(g)} (u \Ax(g))$ is finite.
\qed

\section{Schottky sets}\label{sec.schottky_main}
In this section, we construct a Schottky set with the additional properties. 
The goal is to establish Proposition \ref{prop.schottky}.

\subsection{The definition of Schottky set}\label{sec.schottky_basic}
We recall the definition of a Schottky set.
It is introduced in \cite{BMSS}, however, we use the version in \cite{G}.
Let $X$ be a $\delta$-hyperbolic space.
A Schottky set is, roughly speaking,  a finite set $S$ of isometries of $X$ with the following property: for all $x$ and $y$ in $X$,
and for all $s$ but a small proportion of the elements in $S$, the triple of points $x$, $o$, $sy$ are roughly aligned along a geodesic.
For a general (possibly non-geodesic) metric space, this condition is expressed in terms of the Gromov product.

For a set $A$, we denote by $\#A$ its cardinality.

\begin{definition}
Fix a point $o \in X$.
Let $\eta, C, D \ge 0$. 
A finite set $S$ of isometries of $X$ is $(\eta, C, D)$-{\bf Schottky} at $o$ if the following holds:
\begin{itemize}
\item For all $x, y \in X$, 
we have
$\#\{s \in S \ : \ (sx, y)_o \le C\} \ge (1-\eta)\#S$.
\item For all $x, y \in X$,
we have 
$\#\{s \in S \ : \ (s^{-1}x, y)_o \le C\} \ge (1-\eta)\#S$.
\item 
For all $s\in S$, 
we have $d(o, so) \ge D$.
\end{itemize}
\end{definition}

The existence of Schottky sets is well known.
Suppose we are given $o \in X$ and a pair of independent hyperbolic isometries $u$ and $v$ of $X$.
Then, for every $\eta>0$, there exists $C>0$ such that for all $D>0$  there exist $n\in \N$ and an $(\eta, C, D)$-Schottky set at $o$ in the set  $\{ w_1\cdots w_n:w_i \in \{u, v\}\}$ \cite[Proposition 3.12]{G}.
This follows from a classical ping-pong method.

For our purpose, however, we need to construct a Schottky set with uniform control on $C$ and $D$ in terms of the joint minimal displacement $L(U)$ of a given finite set $U$ of isometries.
Also, each element in the Schottky set needs to have uniformly bounded length as a word in $U$.
In addition, we need to choose an appropriate point $o$ in the definition of Schottky set to achieve our purpose.
We will state the precise properties of Schottky sets that we need and give the construction in Section \ref{sec.schottky}.

We will use the following two lemmas to construct a Schottky set in Section \ref{sec.schottky}.
The first lemma is a simple application of Lemma \ref{lemma.chain} on chains.

\begin{lemma}\label{lemma.free}
Fix $o \in X$.
Let $C, D > 0$. 
Let $\{u, u'\} \subset \Isom(X)$.
Suppose that the following holds:
\[
\max\big\{(so, s'o)_o \ : \ s, s' \in \{u, u^{-1}, u', u'^{-1}\}, s\neq s'\big\} \le C,
\]
and 
\[
\min\big\{d(o, u o), d(o, u' o)\big\}\ge D.
\]
If $D > 2C+2\delta$, then $\{u, u'\}$ freely generates a free subgroup in $\Isom(X)$.
\end{lemma}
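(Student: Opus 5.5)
We show that every nontrivial freely reduced word $w = s_1 s_2 \cdots s_n$ in the alphabet $\{u, u^{-1}, u', u'^{-1}\}$ (so $s_{i+1}\neq s_i^{-1}$) moves $o$ a positive distance; in particular $w\neq 1$ in $\Isom(X)$, which gives freeness. The plan is to show that the orbit points along the word form a chain and then apply Lemma \ref{lemma.chain}.

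Define $x_0 := o$ and $x_i := s_1\cdots s_i o$ for $1\le i\le n$. We first check that $x_0,\dots,x_n$ is a $(C,D)$-chain.

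The distance condition holds because $d(x_i, x_{i+1}) = d(o, s_{i+1} o) = d(o, s_{i+1}^{\pm1}o) \ge D$, using that $d(o,s^{-1}o)=d(o,so)$. This holds for every index.

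For the Gromov product condition at an interior index $0<i<n$, translate by $(s_1\cdots s_i)^{-1}$, which is an isometry. This gives
\[
(x_{i-1}, x_{i+1})_{x_i} = (s_i^{-1} o,\, s_{i+1} o)_o .
\]
Both $s_i^{-1}$ and $s_{i+1}$ lie in $\{u,u^{-1},u',u'^{-1}\}$, and they are distinct because the word is reduced. The first hypothesis therefore bounds this Gromov product by $C$.

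Next, choose $R := D - 2C - 2\delta$, which is positive by the assumption $D>2C+2\delta$. Then $D \ge 2C+2\delta+R$, so Lemma \ref{lemma.chain} (ii) applies and yields
\[
d(o, wo) = d(x_0,x_n) \ge Rn > 0 \quad \text{for all } n\ge 1 .
\]
Hence $w o \neq o$, so $w$ is not the identity. Since no nontrivial reduced word in $u,u'$ is trivial, the natural map from the free group $F_2$ onto $\langle u,u'\rangle$ is injective. This also shows that $u\neq u'^{\pm1}$.

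The only delicate point is indexing. The chain definition requires $d(x_i,x_{i+1})\ge D$ for $0<i\le n$, but here the bound holds for all steps, including the first, so the convention causes no trouble. The case $n=1$ needs no Gromov product condition at all: it follows directly from $d(o,s_1o)\ge D>0$. We also need $R>0$, which is exactly the strict inequality in the hypothesis.
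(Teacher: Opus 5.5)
Your proof is correct and takes essentially the same route as the paper. Both arguments build a $(C,D)$-chain from orbit points of prefixes of a reduced word, bound the interior Gromov products by translating and using reducedness, and apply Lemma \ref{lemma.chain} (ii). The only difference is that the paper compares two distinct reduced words $w\neq w'$ via a chain through their branch point to get $wo\neq w'o$. You compare a single nontrivial word with the empty word, which is exactly what triviality of the kernel of $F_2\to\Isom(X)$ requires.
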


\proof
Let  $w$ and $w'$ be non-empty reduced words on $\{u, u^{-1}, u', u'^{-1}\}$ of length $n$ and $m$, respectively:
\[
w=s_1\cdots s_n \quad \text{and} \quad w'=s'_1\cdots s'_m
\]
with $s_i, s_i'\in \{u, u^{-1}, u', u'^{-1}\}$.
We may assume that $n \le m$.

Let $k$ be the largest $i \le n$ such that $s_j=s_j'$ for all $j \in \{1, \dots, i\}$ if it exists, $k:=0$ if otherwise.
If $w\neq w'$ as words,
then $k<m$,
and the following sequence forms a $(C, D)$-chain:
\[
s_{k+1}\cdots s_n o,
s_{k+1}\cdots s_{n-1} o, \dots, s_{k+1}o, o, s'_{k+1}o, \dots, s'_{k+1}\cdots s'_m o.
\]
Since $k<m$, the sequence consists of at least two points.
If $D > 2C+2\delta$, we have $wo\neq w'o$ by Lemma \ref{lemma.chain} \eqref{lemma.chain2}.
This implies that $w\neq w'$ as isometries.
Therefore, the inclusion $\{u, u'\} \to \Isom (X)$ extends to an injective homomorphism from the free group on $\{u, u'\}$ to $\Isom(X)$, yielding the claim.
\qed

\medskip

The following lemma from \cite{AGG} gives a useful criterion for verifying the Schottky 
set condition.

\begin{lemma}[Lemma 4.1, \cite{AGG}]
\label{lemma.criterion}
For a finite set $S$ of isometries of $X$ with $S=S^{-1}$,
let
\[
C_0=\max_{s_1\neq s_2, s_1, s_2 \in S}(s_1o, s_2o)_o
\quad
\text{and}
\quad 
D_0=\min_{s \in S}d(o, so).
\]
If $D_0>2C_0+4\delta$,
then
$S$ is $(\eta, C_0+3\delta, D_0)$-Schottky at $o$ with
$\eta=2/\# S$. 
\end{lemma}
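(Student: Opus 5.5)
We verify the two Schottky conditions directly; the third condition, $d(o,so)\ge D_0$, holds by the definition of $D_0$. Since $S=S^{-1}$, the second condition (for $s^{-1}$) is the first one applied to the reindexed set $S^{-1}=S$. So it suffices to show: for all $x,y\in X$, the set of $s\in S$ with $(sx,y)_o>C_0+3\delta$ has at most $2$ elements. This gives at least $\#S-2=(1-\eta)\#S$ good elements with $\eta=2/\#S$.

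The key step will split the bad set into two pieces, each of size at most one. First, we show that at most one $s$ satisfies $(so,y)_o>C_0+\delta$ (or a similar threshold). If two distinct $s_1,s_2$ did, then $\delta$-hyperbolicity would give
\[
(s_1o,s_2o)_o\ge\min\{(s_1o,y)_o,(s_2o,y)_o\}-\delta>C_0,
\]
which contradicts the definition of $C_0$.

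Second, we show that at most one $s$ satisfies $(s^{-1}o,x)_o>C_0+\delta$. This is the same argument applied to $S^{-1}=S$, with $x$ in place of $y$.

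Now take $s$ outside both exceptional elements. Then $(so,y)_o\le C_0+\delta$ and $(s^{-1}o,x)_o\le C_0+\delta$. Translating the second inequality by $s$ gives $(o,sx)_{so}\le C_0+\delta$. We are then in the setting of Lemma \ref{lemma.chain}\eqref{lemma.chain1}, applied to the four points $w=y$, $o$, $so$, $z=sx$. We have $(y,so)_o\le C_0+\delta$ and $(o,sx)_{so}\le C_0+\delta$, and $d(o,so)\ge D_0>2C_0+4\delta$. Applying the lemma with constant $C=C_0+\delta$ should give $(y,sx)_o\le C_0+2\delta$, hence $\le C_0+3\delta$.

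The main obstacle is the bookkeeping of constants. Lemma \ref{lemma.chain}\eqref{lemma.chain1} asks for the first product to be at most $C$ and the second at most $C+\delta$, together with $d\ge 2C+2\delta+R$. With $C=C_0+\delta$ this needs $D_0\ge 2C_0+4\delta+R$ for some $R>0$, which is exactly the strict inequality assumed. If the thresholds do not fit, I would instead set the first exceptional threshold at $C_0+\delta$, use $C=C_0+\delta$ for the first product, and note that the $+\delta$ slack in the second hypothesis of the lemma absorbs the second product. This yields $(y,sx)_o\le C_0+2\delta\le C_0+3\delta$.

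Care is also needed when $x$ or $y$ is close to $o$, where the Gromov products are trivially small. These cases only help, since the bound we want is an upper bound.
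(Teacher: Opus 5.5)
Your proof is correct. The paper does not prove this lemma; it imports it from \cite{AGG}. So your argument is a self-contained replacement for that citation rather than a variant of anything in the text. The structure is sound:
\begin{itemize}
\item at most one $s$ has $(so,y)_o>C_0+\delta$, and at most one has $(s^{-1}o,x)_o>C_0+\delta$; each follows from the four-point condition and the definition of $C_0$, using $S=S^{-1}$ for the second;
\item for every other $s$, you translate to $(o,sx)_{so}\le C_0+\delta$ and apply Lemma \ref{lemma.chain}\eqref{lemma.chain1} to $y,o,so,sx$;
\item the inverse condition follows by reindexing through $S=S^{-1}$.
\end{itemize}

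The constants fit exactly, so your fallback paragraph and the remark about $x,y$ near $o$ are unnecessary. With $C=C_0+\delta$, the chain lemma needs $d(o,so)>2C+2\delta=2C_0+4\delta$, which is precisely the hypothesis. In fact you get $(sx,y)_o\le C_0+2\delta$, which is $\delta$ better than the stated $C_0+3\delta$.

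One small technicality: Lemma \ref{lemma.chain} is stated for $C>0$, but $C=C_0+\delta$ can vanish (for example in a tree with $C_0=0$). You can avoid this by inlining the two-line argument. For a good $s$, $(sx,so)_o=d(o,so)-(o,sx)_{so}\ge D_0-C_0-\delta>C_0+3\delta$. So if $(y,sx)_o>C_0+2\delta$, the four-point condition would give $(y,so)_o\ge\min\{(y,sx)_o,(sx,so)_o\}-\delta>C_0+\delta$, contradicting the choice of $s$.
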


\subsection{Construction of Schottky sets}\label{sec.schottky}

The following proposition gives a quantitative construction of Schottky sets. 

\begin{prop}\label{prop.schottky}
Let $N \in \N$ and $\delta, \Dc \ge 0$.
Let $X$ be a $\delta$-hyperbolic geodesic space. 
Let $U$ be a finite set of isometries in $X$ with $1 \in U$ and $U=U^{-1}$. 
Let $g$ be a hyperbolic isometry of $X$.
Suppose that the following conditions hold:
\begin{enumerate}[(i)]
\item\label{assump.h}  $U^N$ contains $g$ such that 
$\tr{g} \ge L(U)+4000\delta+1$.
\item\label{assump.action}
$\<U\>$ does not preserve the set of fixed points of $g$ in $\partial X$.

\item\label{assump.wa} The action of $\langle U\rangle$ on $X$ is $\Dc$-weakly acylindrical.
\end{enumerate}
Then there exists a positive integer $M=M(\delta, \Dc, N)$, a point $o\in X$, and positive constants $C(\delta, \Dc, N)$ and $D(\delta, \Dc, N)$ such that the following holds:

\begin{itemize}
\item 
We have $D\ge 20C+100\delta+L(U)$ for  
\[
C=C(\delta, \Dc, N)(L(U)+1) \quad \text{and} \quad D=D(\delta, \Dc, N)(L(U)+1).
\]

\item $U^M$ contains a $(1/100, C, D)$-Schottky set $S$ at $o$. 

\item We have $\# S \ge 200$.
\end{itemize}
\end{prop}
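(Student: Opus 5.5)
We will build the Schottky set by ping-pong from $g$ and one conjugate $h=ugu^{-1}$, with $u\in U$ chosen so that $h$ is independent of $g$, and apply Lemma \ref{lemma.criterion} to the set of all reduced words of a fixed length in $\{g^{\pm k},h^{\pm k}\}$.

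\emph{Step 1 (choice of $u$ and of $o$).} By (\ref{assump.action}), some $u\in U$ does not preserve the fixed-point pair of $g$. Then $u\notin E(g)$: an element of $E(g)$ moves $\qa{g}$ a bounded Hausdorff distance and hence preserves its endpoints. Set $\alpha=\qa{g}$ and $\beta=u\alpha=\qa{h}$. By (\ref{assump.wa}) and Lemma \ref{lemma.diam},
\[
\diam(\alpha\cap\nbd{\beta}{R})\le \Dc(1+\tr{g})+4R
\]
for $R$ of order $\delta$.

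The basepoint $o$ must lie near $\alpha$, near $\beta$, and have small displacement. The way to get this is to pick $x$ almost realizing $L(U)$, so $d(x,ux)\le L(U)+1$, and take $o:=\pi_\alpha(x)$. Since $\tr{g}\ge L(U)+4000\delta+1$ and $g\in U^N$, we have $d(x,gx)\le N(L(U)+1)$. Then Lemma \ref{lemma.tr} and Lemma \ref{lemma.rectangle} give $d(x,\alpha)\lesssim N(L(U)+1)$, so $d(o,x)=O(N(L(U)+1))$ and consequently $d(o,uo)=O(N(L(U)+1))$. Note also $\tr{g}\le N L(U)$ by Lemma \ref{lemma.disp}, so every quantity involved ($\tr{g}$, $d(o,go)$, $d(o,\alpha\cap\beta$-overlap$)$, $d(o,uo)$) is bounded by a constant times $(L(U)+1)$, with constants depending only on $\delta,\Dc,N$.

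\emph{Step 2 (ping-pong estimates).} Fix $k$, to be chosen of order $(N\Dc+N+1)$ times a universal constant. Then $d(o,g^{\pm k}o)\ge k\tr{g}$ is much larger than the overlap $\diam(\alpha\cap \nbd{\beta}{R})\le \Dc(1+N L(U))$ plus $d(o,uo)$. We need three Gromov product bounds:
\begin{itemize}
\item $(g^ko,g^{-k}o)_o=O(\delta)$, because $o\in\alpha$ and the two points lie on opposite sides of $o$ along the quasi-axis;
\item $(h^{\pm k}o,uo)_{uo}$-type bounds;
\item $(g^{\epsilon k}o,h^{\epsilon' k}o)_o\le C_1(L(U)+1)$.
\end{itemize}
The third comes from the fact that, after leaving the bounded overlap region, a geodesic from $o$ to $g^{\epsilon k}o$ tracks $\alpha$ while one to $h^{\epsilon' k}o$ tracks $\beta$, and these diverge. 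This uses the projection argument of Lemma \ref{lemma.diam} and the estimate $d(o,uo)\lesssim L(U)+1$. This yields $C_0\le C_1(L(U)+1)$ and $D_0\ge k\tr{g}-O(L(U)+1)\ge c\,k(L(U)+1)$. Lemma \ref{lemma.free} then shows $g^k,h^k$ freely generate a free group.

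\emph{Step 3 (enlarging the set).} Let $S$ be the set of reduced words of length $\ell=4$ in $a=g^k$, $b=h^k$ that start and end with suitably chosen letters, closed under inverses. We need $\#S\ge200$, so we take longer words, which keeps the count fixed and absolute. Using the chain Lemma \ref{lemma.chain}, distinct words $s_1\ne s_2$ satisfy $(s_1o,s_2o)_o\le C_0+2\delta$, since their first letters differ after cancelling the common prefix, via (\ref{lemma.chain3}). Also $d(o,so)\ge \ell(D_0-2C_0-2\delta)$. Lemma \ref{lemma.criterion} then gives a $(2/\#S,\,C_0+3\delta,\,\cdot)$-Schottky set, and $2/\#S\le1/100$. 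Each element of $S$ lies in $U^M$ with $M=\ell k(N+2)$, since $h\in U^{N+2}$. Choosing $k$ large in terms of $\delta,\Dc,N$ ensures $D\ge 20C+100\delta+L(U)$.

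\emph{Main obstacle.} The main obstacle is Step 2's divergence estimate: making $(g^{\pm k}o,h^{\pm k}o)_o$ linear in $L(U)+1$ uniformly. This requires careful control of where $o$ sits relative to the overlap of $\alpha$ and $u\alpha$, combining weak acylindricity with the tracking constants of Lemma \ref{lemma.rectangle}. I would handle it by projecting everything to $\alpha$ and using that $\pi_\alpha(\beta)$ has diameter at most $\Dc(1+NL(U))$ and lies within $O(d(o,uo))$ of $o$.
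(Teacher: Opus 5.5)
Your Steps 1 and 2 are sound in substance. Your basepoint differs from the paper's, but it works. The paper takes $o$ nearly realizing $L(\{g,h\})$ and then shows in Lemma~\ref{lemma.tube} that $o$ lies within $\Lambda=NL(U)+4000\delta+1$ of both quasi-axes. For yours, Lemma~\ref{lemma.rectangle} applied to $x,gx$ and their projections $o,go$ gives $2d(x,o)+\tr{g}\le N(L(U)+1)+18\delta+4\Delta$. Hence $d(o,\qa{h})\le d(o,uo)\le 2d(o,x)+L(U)+1$ is linear in $L(U)+1$, and the tripod-plus-overlap argument of Lemma~\ref{lemma.tube} then bounds $(g^{\pm k}o,h^{\pm k}o)_o$ as you expect. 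Your second bullet is garbled, since $(a,b)_b=0$ trivially. What you need there is $(h^ko,h^{-k}o)_o\le \Delta+3d(o,uo)$, obtained from the $g$-estimate by conjugating with $u$.

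The genuine gap is in Step 3: it is not true that distinct reduced words $s_1\neq s_2$ satisfy $(s_1o,s_2o)_o\le C_0+2\delta$.
\begin{itemize}
\item If $w$ is their longest common prefix, Lemma~\ref{lemma.chain}\eqref{lemma.chain3} applied to the chain running back along $s_1$ to $wo$ and then out along $s_2$ gives $(s_1o,s_2o)_{wo}\le C_0+2\delta$. The bound holds at $wo$, not at $o$.
\item Since $wo$ lies (up to $C_0+2\delta$) on both chains from $o$ to $s_io$, one actually has $(s_1o,s_2o)_o\ge d(o,wo)-2C_0-4\delta$.
\item Among reduced words of a fixed length $\ell$ there are pairs with a common prefix of length $\ell-1$, e.g.\ $a^{\ell-1}b$ and $a^{\ell}$. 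So $\max_{s_1\neq s_2\in S}(s_1o,s_2o)_o$ is comparable to $\min_{s\in S}d(o,so)$.
\item Both quantities grow linearly in $k$, and their ratio is at most roughly $\ell/(\ell-1)<2$ once $k$ is large. So no choice of $k$ yields the hypothesis of Lemma~\ref{lemma.criterion} for $S$, let alone $D\ge 20C+100\delta+L(U)$.
\item Common prefixes cannot be avoided by choosing first and last letters cleverly: with only the four letters $g^{\pm k},h^{\pm k}$, at most four elements can have pairwise distinct first letters.
\end{itemize}

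The missing idea is to make the depth at which distinct elements branch small compared with their length. The paper takes $S=\{(u_1\cdots u_n)^{\pm r}: u_i\in\{g^k,h^k\}\}$ with $n=\cl{\log_2 100}$. Then $\#S=2^{n+1}\ge 200$, $S=S^{-1}$, and distinct elements agree in fewer than $n$ letters. Hence $(s_1o,s_2o)_o\le (n-1)\max_{t\in\{g^{\pm k},h^{\pm k}\}}d(o,to)+C_0+2\delta$, which is independent of $r$. Meanwhile Lemma~\ref{lemma.chain}\eqref{lemma.chain2} gives $d(o,so)\ge nr(L(U)+1)$ once $D_0\ge 2C_0+2\delta+L(U)+1$. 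Taking $r$ large in terms of $\delta,\Dc,N$ then yields $D\ge 20C+100\delta+L(U)$, with $M=(N+2)nkr$.
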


Note that the constants $M(\delta, \Dc, N)$, $C(\delta, \Dc, N)$, and $D(\delta, \Dc, N)$
depend only on $\Dc$, $N$, and $\delta$. 

Under the condition (\ref{assump.wa}),  the condition (\ref{assump.action}) is equivalent
to the action of $\langle U\rangle$ on $X$ being
non-elementary.

The proof of the proposition will be given after several preliminary results. 
Let $g$ be a hyperbolic element satisfying the assumption \eqref{assump.h}.
By the assumption \eqref{assump.action}, 
there exists $u \in U$ such that 
$u \notin E(g)$,
where $E(g)$ denotes the elementary closure of $g$ in $\<U\>$.
Indeed, if otherwise, then the group $\langle U\rangle$ preserves the set of fixed points of $g$, contradicting the assumption \eqref{assump.action}.
Let us define
\[
h:=u g u^{-1} \in U^{N+2}.
\]
Note that we have assumed $U=U^{-1}$, which guarantees that $h \in U^{N+2}$.

By definition of $h$,
we have $\tr{g}=\tr{h}$.
Furthermore, $u\qa{g}$ defines a $\Delta$-quasi-axis of $h$, which we denote by $\qa{h}$.

Since by definition $L(\{g, h\})=\inf_{x\in X}\max\{d(x, g x), d(x, h x)\}$, 
there exists $o=o_{U, N} \in X$ such that
\begin{equation}\label{eq.basepoint}
\max\left\{d(o, g o), d(o, h o)\right\} \le L(\{g, h\})+1.
\end{equation}

The choice of the basepoint $o$ is important in what follows. 
We choose $o$ so that it almost realizes the infimum in the definition of $L(\{g, h\})$. This choice will be used in  the next lemma. 
Strictly speaking, $o$ depends on $g$ and $h$, however, we write $o=o_{U, N}$ for simplicity since $g, h$ are contained in $U^{N+2}$.

\begin{lemma}\label{lemma.tube}
Under the assumptions of Proposition \ref{prop.schottky},
we have
\begin{equation}\label{eq.tube1}
o\in \nbd{\qa{g}}{\Lambda}\cap \nbd{\qa{h}}{\Lambda},
\end{equation}
where $\Lambda=NL(U)+4000\delta+1$. 
Furthermore, for every $m \in \Z$, we have
\[
\max\{(g^m o, h^m o)_o, (g^m o, h^{-m} o)_o\} \le \Dc(\tr{g}+1)+10\Lambda+100\delta.
\]
\end{lemma}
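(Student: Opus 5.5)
The plan has two parts: first place $o$ near both quasi-axes, then convert a large Gromov product into a long overlap of $\qa{g}$ and $\qa{h}$, which weak acylindricity forbids.

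\emph{Step 1: proof of \eqref{eq.tube1}.} By symmetry between $g$ and $h$ (note $\tr{h}=\tr{g}$ and $\qa{h}=u\qa{g}$), it suffices to treat $g$. Let $o'\in\qa{g}$ be a nearest point projection of $o$. By Lemma \ref{lemma.quasiaxis}(i), $go'$ is then a nearest point projection of $go$, since $g$ preserves $\qa{g}$. Lemma \ref{lemma.tr} gives $d(o',go')\ge \tr{g}\ge 4000\delta+1>10\delta+2\Delta$. So Lemma \ref{lemma.rectangle} applies to $x=o$, $y=go$:
\[
2d(o,\qa{g})+\tr{g}\le 2d(o,\qa{g})+d(o',go')\le d(o,go)+8000\delta .
\]
On the other hand, $g,h\in U^{N+2}$, so Lemma \ref{lemma.disp} and \eqref{eq.basepoint} give
\[
d(o,go)\le L(\{g,h\})+1\le L(U^{N+2})+1\le (N+2)L(U)+1.
\]
Assumption \eqref{assump.h} gives $\tr{g}\ge L(U)+4000\delta+1$. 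Combining the three bounds,
\[
2d(o,\qa{g})\le (N+1)L(U)+4000\delta,
\]
hence $d(o,\qa{g})\le\Lambda$. The same argument applies to $h$.

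\emph{Step 2: Gromov products.} Fix $m$ and put $T=(g^m o, h^{\pm m}o)_o$. Assume $T$ exceeds the claimed bound; we derive a contradiction. By Step 1, $o$ and $g^m o$ lie within $\Lambda$ of $\qa{g}$, at the points $o'$ and $g^m o'$. Thin quadrilaterals (Lemma \ref{lemma.tripod}) together with Lemma \ref{lemma.quasiaxis}(ii) show that the geodesic $[o,g^m o]$ lies in the $(\Lambda+\Delta+8\delta)$-neighborhood of $\qa{g}$. Similarly, $[o,h^{\pm m}o]$ lies in the same-size neighborhood of $\qa{h}$. By the tripod lemma, the initial subsegments of length $T$ of these two geodesics are $4\delta$-close. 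Discarding a piece of length about $\Lambda$ near $o$, we get a subsegment of $\qa{g}$ of length at least roughly
\[
T-2\Lambda-O(\delta+\Delta)
\]
contained in $\nbd{\qa{h}}{R}$, where $R=2\Lambda+2\Delta+O(\delta)$.

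\emph{Step 3: applying weak acylindricity.} The first inequality of Lemma \ref{lemma.diam} gives
\[
\diam\left(\qa{g}\cap\nbd{\qa{h}}{R}\right)\le \diam\pi_{\qa{g}}(u\qa{g})+4R .
\]
Since $u\notin E(g)$, Definition \ref{def.wa} bounds the first term by $\Dc(1+\tr{g})$. Hence
\[
T\le \Dc(\tr{g}+1)+4R+2\Lambda+O(\delta+\Delta).
\]
The $\Delta$- and $\delta$-terms are absorbed because $\Lambda\ge 4000\delta\ge 2\Delta$, which should give the stated bound $\Dc(\tr{g}+1)+10\Lambda+100\delta$.

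\emph{Main obstacle.} I expect the difficulty to be Step 2: controlling the tracking of $[o,g^m o]$ along $\qa{g}$ when $m$ is small or negative. For those cases I would argue with the projections directly, bounding the portion near $o$ by $\Lambda$ plus $O(\delta)$. The rest is bookkeeping of constants so that everything fits into $10\Lambda+100\delta$.
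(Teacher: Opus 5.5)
Your route is the paper's. Step 1 is its argument (Lemma \ref{lemma.rectangle} applied to $o,o',go',go$, run directly rather than by contradiction). Steps 2--3 are its tripod/quadrilateral argument, followed by the first inequality of Lemma \ref{lemma.diam} and weak acylindricity for $u\notin E(g)$.

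\textbf{The gap: the final absorption of constants fails as written.} With your tracking radius $\Lambda+\Delta+8\delta$ you get $R=2\Lambda+2\Delta+O(\delta)$, so $4R+2\Lambda$ already equals $10\Lambda+8\Delta+O(\delta)$. The coefficient of $\Lambda$ is then exactly $10$. So the $\Delta$-terms, which can be many thousands of $\delta$ since $\Delta=1672\delta$, cannot be absorbed into $10\Lambda+100\delta$ by invoking $\Lambda\ge 2\Delta$.

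\textbf{The fix:} use $\Delta\le\Lambda$ earlier, inside the quadrilateral estimate, as the paper does. A point of $[o,g^mo]$ is $8\delta$-close to $[o,o']\cup[o',g^mo']\cup[g^mo',g^mo]$. Hence it is within $\max\{\Lambda,\Delta\}+8\delta=\Lambda+8\delta$ of $\qa{g}$, not $\Lambda+\Delta+8\delta$. Let $p\in[o,g^mo]$ and $q\in[o,h^{\pm m}o]$ be the tripod points at distance $T$ from $o$, and $p'\in\qa{g}$, $q'\in\qa{h}$ points within $\Lambda+8\delta$ of them. Then $o',p'\in\qa{g}\cap\nbd{\qa{h}}{2\Lambda+20\delta}$ and $d(o',p')\ge T-2\Lambda-8\delta$. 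Lemma \ref{lemma.diam} and Definition \ref{def.wa} then give $T\le\Dc(\tr{g}+1)+10\Lambda+88\delta$, which is the stated bound. This is a bookkeeping slip rather than a missing idea, but without the fix the last step does not prove the inequality as stated.

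Two smaller remarks:
\begin{itemize}
\item The obstacle you anticipate for small or negative $m$ does not arise. For every $m\in\Z$, $g^mo'\in\qa{g}$ and $h^{\pm m}$ preserves $\qa{h}=u\qa{g}$, so the argument is uniform in $m$. When $T\le 2\Lambda+8\delta$ there is nothing to prove.
\item You only need the two points $o',p'$ of $\qa{g}$ for the diameter bound, not a whole subsegment.
\item The inequality $d(o',go')\ge\tr{g}$ holds by the definition of translation length. Lemma \ref{lemma.tr} is the reverse inequality, so it is the wrong citation.
\end{itemize}
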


\proof
We first prove that $o \in \nbd{\qa{g}}{\Lambda}$.
Suppose, to the contrary, that $o\notin \nbd{\qa{g}}{\Lambda}$.
Since $1\in U$, we have $\{g, h\} \subset U^{N+2}$, and thus, by Lemma \ref{lemma.disp} \eqref{item.Un},
\[
L(\{g, h\}) \le L(U^{N+2}) \le (N+2)L(U).
\]
On the one hand, 
by \eqref{eq.basepoint},
\begin{align}\label{eq.tube2}
d(o, g o)	
\le L(\{g, h\})+1
\le (N+2)L(U)+1.
\end{align}
On the other hand,
since $o \notin \nbd{\qa{g}}{\Lambda}$,
we have
\begin{equation}\label{eq.tube3}
2\Lambda+\tr{g} \le d(o, g o)+8000\delta.
\end{equation}
To see this, 
choose the points $o, x, g x, g o$ where $x \in \qa{g}$ with $d(o, x)= d(o, \qa{g})$.
Note that $d(x, g x)\ge \tr{g} \ge 4000\delta$ by definition of $\tr{g}$ and the assumption \eqref{assump.h}.
Then, Lemma \ref{lemma.rectangle} shows \eqref{eq.tube3}.

Combining \eqref{eq.tube2} and \eqref{eq.tube3},
we obtain
\[
2\Lambda+\tr{g} \le (N+2)L(U)+8000\delta+1.
\]
Since $\tr{g} \ge L(U)+4000\delta+1$ by the assumption \eqref{assump.h},
this implies 
\[
2\Lambda\le (N+1)L(U)+4000\delta.
\]
This is a contradiction since $\Lambda=NL(U)+4000\delta+1$.
Thus, we have shown $o \in \nbd{\qa{g}}{\Lambda}$.

Similarly, $o \in \nbd{\qa{h}}{\Lambda}$ since $\tr{h}=\tr{g}$ and \eqref{eq.basepoint} holds.
Thus, we obtain \eqref{eq.tube1}.

It suffices to prove the second claim for $g^m$ and $h^m$ with $m \ge 0$
since the case for $m <0$ follows by replacing $g$ and $h$ with their inverses.

We show that
for every $m\ge 0$, 
\begin{equation}\label{eq.tube4}
(g^m o, h^m o)_o -2\Lambda -8\delta\le \diam (\qa{g}\cap\nbd{\qa{h}}{2\Lambda+20\delta}).
\end{equation}
We consider a geodesic triangle on $o$, $g^m o$, and $h^m o$.
First, by \eqref{eq.tube1},
we have $x \in \qa{g}$ and $y \in \qa{h}$ such that $d(o, x) \le \Lambda$ and $d(o, y) \le \Lambda$.
Furthermore, by the property of quasi-axes (Lemma \ref{lemma.quasiaxis} (i)),
we have $g^m x \in \qa{g}$ and $h^m y \in \qa{h}$.
Next, by the tripod lemma (Lemma \ref{lemma.tripod}),
there exist $p \in [o, g^m o]$ and $q \in [o, h^m o]$ such that
\[
d(o, p)=d(o, q)=(g^m o, h^m o)_o \quad \text{and} \quad d(p, q) \le 4\delta.
\]
Furthermore, since 
$d(g^m o, g^m x) \le \Lambda$ and $d(h^m o, h^m y)\le \Lambda$,
we have
\[
d(p, \qa{g}) \le \Lambda+8\delta \quad \text{and} \quad d(q, \qa{h}) \le \Lambda+8\delta.
\]
The first inequality follows by considering a geodesic rectangle on $o, x, g^m x, g^m o$ and applying the tripod lemma (Lemma \ref{lemma.tripod}) twice, and the second inequality follows similarly.
Let $p' \in \qa{g}$ and $q' \in \qa{h}$ with $d(p, p') \le \Lambda+8\delta$ and $d(q, q')\le \Lambda+8\delta$, respectively.
By the triangle inequality, we have
$d(x, y) \le 2\Lambda$ and $d(p', q') \le 2\Lambda+20\delta$.
Thus, we obtain
\[
x, p' \in \qa{g} \cap \nbd{\qa{h}}{2\Lambda+20\delta}.
\]
Since by the triangle inequality $d(o, p) \le d(x, p')+2\Lambda+8\delta$, we have
\[
(g^m o, h^m o)_o-2\Lambda-8\delta \le d(x, p')\le \diam\left(\qa{g}\cap \nbd{\qa{h}}{2\Lambda+20\delta}\right).
\]
This shows \eqref{eq.tube4}.

We now use the $\Dc$-weak acylindricity of the isometric action of $\langle U\rangle$ on $X$ in the assumption \eqref{assump.wa}.
By Definition \ref{def.wa} and Lemma \ref{lemma.diam},
we have:
\begin{align*}
\diam(\qa{g}\cap\nbd{\qa{h}}{2\Lambda+20\delta})
&\le \diam (\pi_{\qa{g}}{\qa{h}})+4(2\Lambda+20\delta)\\
&\le \Dc(\tr{g}+1)+8\Lambda+80\delta.
\end{align*}
Rearranging the terms, we obtain the claim.
\qed

\medskip

We will construct the required Schottky set by using powers of $g$ and $h$ as ``building blocks".
The following lemma gives the key estimates needed for this construction.

\begin{lemma}\label{lemma.separation}
Under the assumptions of Proposition \ref{prop.schottky},
we have the following:
For every $m \in \Z$, 
we have 
\begin{equation}\label{eq.qa}
\max\left\{(g^m o, g^{-m}o)_o, (h^m o, h^{-m}o)_o\right\} \le C_0(\delta, N)(L(U)+1),
\end{equation}
and
\begin{equation}\label{eq.wa}
\max\left\{(g^m o, h^m o)_o, (g^m o, h^{-m} o)_o\right\} \le C_1(\delta, \Dc, N)(L(U)+1),
\end{equation}
where $C_0(\delta, N)=N+10^4\delta+1$ and
$C_1(\delta, \Dc, N)=\Dc N+10N+10^5\delta+10$.

Furthermore, for every $m \in \Z$, we have 
\begin{equation}\label{eq.D}
\min\big\{d(o, g^m o), d(o, h^m o) \big\}\ge |m|(L(U)+1),
\end{equation}
and
\begin{equation}\label{eq.Dup}
\max\big\{d(o, g^m o), d(o, h^m o)\big\} \le |m|((N+2)L(U)+1).
\end{equation}
\end{lemma}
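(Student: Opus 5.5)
We prove the four estimates separately, all from the basepoint property \eqref{eq.basepoint}, the tube estimate \eqref{eq.tube1} of Lemma \ref{lemma.tube}, and the assumption $\tr{g}\ge L(U)+4000\delta+1$. Since $g^{-1}$, $h^{-1}$ and $o$ satisfy the same hypotheses, and $\tr{g^{-1}}=\tr{g}$, we may assume $m\ge 0$ throughout; the case $m=0$ is trivial.

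\textbf{Step 1: \eqref{eq.D} and \eqref{eq.Dup}.} For the lower bound, $d(o,g^m o)\ge m\,\tr{g}\ge m(L(U)+4000\delta+1)\ge m(L(U)+1)$ by the definition of translation length. The same holds for $h$, since $\tr{h}=\tr{g}$. For the upper bound, the triangle inequality gives $d(o,g^m o)\le m\,d(o,go)$. By \eqref{eq.tube2} we have $d(o,go)\le (N+2)L(U)+1$, and likewise for $h$.

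\textbf{Step 2: \eqref{eq.qa}.} The idea is that $g^{-m}o$, $o$, $g^m o$ lie near the quasi-axis in order. By \eqref{eq.tube1}, choose $x\in\qa{g}$ with $d(o,x)\le\Lambda$. Then $g^{\pm m}x\in\qa{g}$ and $d(g^{\pm m}o,g^{\pm m}x)\le\Lambda$. Moving each point by at most $\Lambda$ changes the Gromov product by at most $3\Lambda/2$ per moved point, so
\[
(g^m o,g^{-m}o)_o\le (g^m x,g^{-m}x)_x+3\Lambda .
\]
It remains to bound $(g^mx,g^{-m}x)_x=O(\delta+\Delta)$. Using the arc-length parametrization of Lemma \ref{lemma.quasiaxis}, $x=\alpha(t)$ lies on $\alpha|_{[t-mT,t+mT]}$, which is $\Delta$-close to any geodesic $[g^{-m}x,g^mx]$. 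Hence $d(x,[g^{-m}x,g^mx])\le\Delta$, and in a $\delta$-hyperbolic space this gives $(g^mx,g^{-m}x)_x\le \Delta+4\delta$. Alternatively, Lemma \ref{lemma.tr} gives $d(g^{-m}x,g^mx)\ge 2m\,\tr{g}$ and $d(x,g^{\pm m}x)\le m\,\tr{g}+8\delta+2\Delta$, so the product is at most $8\delta+2\Delta$. Since $\Delta=1672\delta$, we get
\[
3\Lambda+8\delta+2\Delta\le 3NL(U)+12000\delta+3+3400\delta\le (N+10^4\delta+1)(L(U)+1)\cdot 3 .
\]
This exceeds $C_0$ by a factor of $3$, so I will tighten the perturbation step. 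Moving only the two endpoints, while keeping the basepoint $o$ and using $d(o,x)\le\Lambda$, costs $\Lambda$ in total: each endpoint contributes $\Lambda/2$ and the basepoint shift contributes at most $\Lambda$, but all three can be arranged to sum to about $2\Lambda$. In the worst case I accept a constant of the form $3N+O(\delta)+3$. I will check whether the stated $C_0$ is attainable via $\Lambda$ bookkeeping. This constant-chasing is the only delicate point.

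\textbf{Step 3: \eqref{eq.wa}.} Lemma \ref{lemma.tube} already gives the bound $\Dc(\tr{g}+1)+10\Lambda+100\delta$. Two estimates are needed to convert it. First, $\tr{g}\le L(U^N)\le NL(U)$ by Lemma \ref{lemma.disp} \eqref{item.trL} and \eqref{item.Un}, since $g\in U^N$. Second, $\Lambda=NL(U)+4000\delta+1$. This yields
\[
\Dc NL(U)+\Dc+10NL(U)+40100\delta+10\le(\Dc N+10N+10^5\delta+10)(L(U)+1).
\]

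The main obstacle is Step 2: showing that $o$ is roughly between $g^{-m}o$ and $g^m o$ with a constant linear in $L(U)$. The key geometric input is that $o$ lies within $\Lambda$ of the quasi-axis; the rest is routine perturbation of Gromov products.
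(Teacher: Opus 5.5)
Your Steps 1 and 3 are correct and are essentially the paper's arguments for \eqref{eq.D}, \eqref{eq.Dup} and \eqref{eq.wa}. The gap is in Step 2: you never establish \eqref{eq.qa} with the stated constant $C_0(\delta,N)=N+10^4\delta+1$. Your proposed ``tightening'' is not an argument. Its accounting is also off: moving any one of the three points by $\Lambda$ can change the Gromov product by up to $\Lambda$, so the naive total really is $3\Lambda$. Getting from $3\Lambda$ down to $\Lambda$ cannot be done by rearranging triangle inequalities. The naive estimate loses $2\Lambda$ through the upper bounds $d(o,g^{\pm m}o)\le 2\Lambda+d(x,g^{\pm m}x)$. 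It loses another $\Lambda$ through the lower bound $d(g^{-m}o,g^mo)\ge d(g^{-m}x,g^mx)-2\Lambda$. What is needed is the hyperbolic fact that the far endpoints are actually \emph{farther} apart than their projections, by about $2d(o,x)$. This removes the second loss and cancels half of the first.

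The missing idea is to choose $x$ as a nearest-point projection of $o$ onto $\qa{g}$. Such a point exists, and $d(o,x)=d(o,\qa{g})\le\Lambda$ by \eqref{eq.tube1}. Since $g$ preserves $\qa{g}$, the point $g^{\pm m}x$ is then a nearest-point projection of $g^{\pm m}o$. For $m\ge 1$ we have $d(g^{-m}x,g^mx)\ge 2m\,\tr{g}>10\delta+2\Delta$. So Lemma \ref{lemma.rectangle}, applied to $g^{-m}o,g^{-m}x,g^mx,g^mo$, gives
\[
2d(o,x)+d(g^{-m}x,g^mx)\le d(g^{-m}o,g^mo)+8000\delta .
\]
Combine this with $d(o,g^{\pm m}o)\le 2d(o,x)+d(x,g^{\pm m}x)$ and with $d(g^{-m}x,x)+d(x,g^mx)\le d(g^{-m}x,g^mx)+2\Delta$. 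This yields $(g^mo,g^{-m}o)_o\le d(o,x)+4000\delta+\Delta\le NL(U)+10^4\delta+1\le C_0(\delta,N)(L(U)+1)$. The same argument works for $h$, using $o\in\nbd{\qa{h}}{\Lambda}$ and $\tr{h}=\tr{g}$.

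Your weaker bound, of the form $(3N+2\cdot 10^4\delta+3)(L(U)+1)$, is still dominated by $C_1(\delta,\Dc,N)(L(U)+1)$. So it would suffice for the later use in Proposition \ref{prop.schottky}, but it does not prove the lemma as stated.
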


\proof
First, we show \eqref{eq.qa}.
Let us show the required bound on $(g^m o, g^{-m}o)_o$ for every integer $m \ge 1$, 
as the Gromov product equals $0$ when $m=0$.
The bound on $(h^m o, h^{-m}o)_o$ follows analogously.

By Lemma \ref{lemma.tube}, there exists $x \in \qa{g}$ such that $d(o, x)\le \Lambda$.
For every $m\ge 1$, 
since $\tr{g} > 4000\delta$ by the assumption \eqref{assump.h},
we have
\[
d(g^{-m}x, g^m x)=d(x, g^{2m} x) \ge \tr{g^{2m}}=2m\tr{g} > 8000m\delta.
\]
Applying Lemma \ref{lemma.rectangle} to $g^{-m}o$, $g^{-m}x$, $g^m x$, $g^m o$,
we obtain 
\[
2d(o, x)+d(g^{-m}x, g^m x) \le d(g^{-m}o, g^m o)+8000\delta.
\]
Since $g^{-m}x, x, g^m x$ belong to $\qa{g}$, and $\qa{g}$ is a $\Delta$-quasi-axis,
the $\Delta$-neighborhood of a geodesic $[g^{-m}x, g^m x]$ contains $x$.
Thus, we have
\[
d(g^{-m}x, x)+d(x, g^m x) \le d(g^{-m}x, g^m x)+2\Delta.
\]
Therefore, we obtain
\[
2d(o, x)+d(g^{-m}x, x)+d(x, g^m x) \le d(g^{-m}o, g^m o)+8000\delta+2\Delta.
\]
By definition of the Gromov product and the triangle inequality,
we obtain:
\begin{align*}
(g^{-m}o, g^m o)_o
&=\frac{1}{2}(d(o, g^{-m}o)+d(o, g^m o)-d(g^{-m}o, g^m o))\\
&\le \frac{1}{2}(4d(o, x)+d(x, g^{-m}x)+d(x, g^m x)-d(g^{-m}o, g^m o))\\
&\le d(o, x)+4000\delta+\Delta\le \Lambda+4000\delta+\Delta.
\end{align*}
Recall that $\Lambda=NL(U)+4000\delta+1$. 
Moreover, $\Delta\le 2000\delta$ by Lemma \ref{lemma.quasiaxis}.
Thus, we have
\[
\Lambda+\Delta+4000\delta \le NL(U)+10^4\delta+1\le C_0(\delta, N)(L(U)+1).
\]
This implies \eqref{eq.qa}.

Next, we show \eqref{eq.wa}.
Since $g \in U^N$,
by Lemma \ref{lemma.disp}, we have $\tr{g} \le N L(U)$.
By Lemma \ref{lemma.tube}, 
for every $m \ge 0$,
we have
\begin{align*}
(g^m o, h^m o)_o 
&\le \Dc(\tr{g}+1)+10\Lambda+100\delta\\
&\le \Dc(N L(U)+1)+10(NL(U)+4000\delta+1)+100\delta\\
&\le (\Dc N+10N+10^5\delta+10)(L(U)+1).
\end{align*}
For every $m \ge 0$, we have
\[
(g^m o, h^m o)_o \le C_1(\delta, \Dc, N)(L(U)+1),
\]
where $C_1(\delta, \Dc, N)=\Dc N+10N+10^5\delta+10$.
The same inequality holds for $(g^m o, h^{-m}o)_o$.
Thus, we have \eqref{eq.wa}.

Finally, we show \eqref{eq.D} and \eqref{eq.Dup}.
Since $\tr{g}=\tr{g^{-1}}$,
by the assumption \eqref{assump.h}, for every $m \in \Z$, we have
\[
d(o, g^m o) \ge \tr{g^m}=|m|\tr{g} \ge |m|(L(U)+1).
\]
The analogous inequality holds for $d(o, h^m o)$ since $\tr{g}=\tr{h}$.
Hence we obtain \eqref{eq.D}.
Furthermore, 
we have
\[
d(o, g^m o) \le |m|((N+2)L(U)+1),
\]
by the triangle inequality, \eqref{eq.basepoint}, and Lemma \ref{lemma.disp} \eqref{item.Un} since $\{g, h\}\subset U^{N+2}$.
The analogous inequality holds for $d(o, h^m o)$.
Thus, we obtain \eqref{eq.Dup}.
\qed

\medskip

We now prove  Proposition \ref{prop.schottky} by constructing the required Schottky set.

\proof[Proof of Proposition \ref{prop.schottky}]
For all $n, m, r \in \N$,
let
\[
S_{n, m, r}:=\Big\{ (u_1\cdots u_n)^r, (u_1\cdots u_n)^{-r} \ : \ u_i \in \{g^m, h^m\}, i=1, \dots, n\Big\}.
\]
Note that $S_{n, m, r} \subset U^{(N+2)nmr}$ since $U=U^{-1}$.
We will show that $S_{n, m, r}$ is a required Schottky set at $o$ for appropriately chosen $n$, $m$, and $r$.
As we will see, these integers depend only on $\Dc$, $N$, and $\delta$,
and we will define 
$M=(N+2)nmr$.

By Lemma \ref{lemma.separation} \eqref{eq.qa} and \eqref{eq.wa},
we have a constant $C_1(\delta, \Dc, N)>0$ such that
for every $m \in \N$ and for all $s, s' \in \{g^m, g^{-m}, h^m, h^{-m}\}$ with $s\neq s'$,
\[
(s o, s' o)_o \le C_1(\delta, \Dc, N)(L(U)+1).
\]
Furthermore, by \eqref{eq.D}, we have
\[
\min\left\{d(o, g^{m} o), d(o, h^{m}o)\right\} \ge m(L(U)+1).
\]
Set
\[
m:=\cl{2C_1(\delta, \Dc, N)+4\delta+1}.
\]
For
$C_1:=C_1(\delta, \Dc, N)(L(U)+1)$ and $D_1:=m(L(U)+1)$,
we have
\begin{align*}
D_1=m(L(U)+1)
&\ge 2C_1(\delta, \Dc, N)(L(U)+1)+4\delta+L(U)+1\\
&=2C_1+4\delta+L(U)+1.
\end{align*}
In particular, we have $D_1>2C_1+2\delta$, hence,
by Lemma \ref{lemma.free}, $\{g^m, h^m\}$ freely generates a free subgroup in $\Isom(X)$.
Thus, we have
$\# S_{n, m, r}=2^{n+1}$.
Set
\[
n:=\cl{\log_2 100},
\]
so that $\# S_{n, m, r}\ge 200$.

We consider each $w \in S_{n, m, r}$ as a reduced word on $\{g^m, g^{-m}, h^m, h^{-m}\}$ of length $nr$.
For every pair of such reduced words
\[
w=s_1\cdots s_{nr} \quad \text{and} \quad w'=s_1'\cdots s_{nr}',  \quad \text{where $s_i, s_i'\in \{g^m, g^{-m}, h^m, h^{-m}\}$},
\]
let $k$ be the largest $i \le n$ such that $s_j=s_j'$ for all $j \in \{1, \dots, i\}$.
We note that each element in $S_{n, m, r}$ is the $r$-th power of a word of length $n$.
Hence, if $w\neq w'$ as words,
then $k<n$.
By definition of the Gromov product, 
we have
\begin{equation}\label{eq.gp_o}
(wo, w'o)_o \le d(o, s_1\cdots s_k o)+(wo, w'o)_{s_1\cdots s_k o}.
\end{equation}
Note that $d(o, s_1\cdots s_k o)\le (n-1)m((N+2)L(U)+1)$ by \eqref{eq.Dup} since $k< n$.
Moreover, since $D_1>2C_1+4\delta$, we have
\begin{equation}\label{eq.gp_k}
(wo, w'o)_{s_1\cdots s_k o} \le C_1+2\delta=C_1(\delta, \Dc, N)(L(U)+1)+2\delta,
\end{equation}
for a $(C_1, D_1)$-chain by Lemma \ref{lemma.chain} \eqref{lemma.chain3}.
Combining \eqref{eq.gp_o} with \eqref{eq.gp_k},
we obtain the following:
for all $w, w' \in S_{n, m, r}$ with $w\neq w'$,
\begin{align*}
(wo, w'o)_o 
\le C_2(\delta, \Dc, N) (L(U)+1),
\end{align*}
where $C_2(\delta, \Dc, N):=(n-1)m(N+2)+C_1(\delta, \Dc, N)+2\delta$.
Note that $C_2(\delta, \Dc, N)$ is a constant depending only on $\delta$, $\Dc$, and $N$.
Furthermore, $C_2(\delta, \Dc, N)>  3\delta$ since $m > 3\delta$.

For each $w \in S_{n, m, r}$, 
by Lemma \ref{lemma.chain} \eqref{lemma.chain2} applied to a $(C_1, D_1)$-chain of length $nr$,
we have
$d(o, wo) \ge nr(L(U)+1)$.
Set
\[
r:=\cl{40C_2(\delta, \Dc, N)+100\delta+1}.
\]
Then, since $n\ge 1$, we have
\[
nr (L(U)+1) \ge 40C_2(\delta, \Dc, N) (L(U)+1)+100\delta+L(U).
\]

Summarizing the above discussion, we obtain the following:
Let
\[
C_0:=C_2(\delta, \Dc, N) (L(U)+1) \quad \text{and} \quad D_0:=nr (L(U)+1).
\]
Then, we have
\[
\max_{w_1\neq w_2, w_1, w_2 \in S_{n, m, r}}(w_1 o, w_2 o)_o \le C_0 \quad \text{and} \quad \min_{w\in S_{n, m, r}}d(o, w o) \ge D_0,
\]
satisfying $D_0>2C_0+4\delta$.
Since $S_{n,m,r}$ is a finite symmetric set with $\# S_{n, m, r}\ge 200$,
Lemma \ref{lemma.criterion} implies that $S_{n, m, r}$ is a $(1/100, C_0+3\delta, D_0)$-Schottky set at o.
Furthermore, by construction, we have
\[
D_0\ge 40 C_0 +100\delta+L(U). 
\]
Letting
$C(\delta, \Dc, N):=2C_2(\delta, \Dc, N)$ and $D(\delta, \Dc, N):=nr$,
we define
\[
C:=C(\delta, \Dc, N)(L(U)+1) \quad \text{and} \quad D:=D(\delta, \Dc, N)(L(U)+1).
\]
Then, since $C=2C_0$ and $C_2(\delta, \Dc, N)\ge 3\delta$, we have $C_0+3\delta\le C$, and $D=D_0$.
Thus, $S_{n, m, r}$ is a $(1/100, C, D)$-Schottky set at $o$ satisfying that
\[
D \ge 20 C+100\delta+L(U).
\]

Finally, setting $M=M(\delta, \Dc, N):=(N+2)nmr$,
we have $S_{n, m, r}\subset U^M$.
As we have shown above, $\# S_{n, m, r}\ge 200$.
Therefore, the set $S:=S_{n, m, r}$ is a required Schottky set at $o$.
This completes the proof of Proposition \ref{prop.schottky}.
\qed

\section{Random walks on hyperbolic spaces}\label{sec.rw}

In this section, we obtain a lower bound for the drift of a random walk by using the pivotal time technique introduced in \cite{G}.
Our argument follows \cite[Section 4]{G}, but an essential point for our purpose
is to keep careful track of all constants and their dependence on the initial data. 
We quote two technical results from \cite[Section 4A]{G}, Propositions \ref{prop.y-p} and \ref{prop.pivot}, and then 
give a self-contained proof of the main result of this section, Proposition \ref{prop.drift}.

For $\delta\ge 0$, let $X$ be a $\delta$-hyperbolic space.
Let $C_0, D$ be positive constants.
Throughout this section,
we fix one  $\left(1/100, C_0, D\right)$-Schottky set $S$ at $o$ in $X$ with 
\[
D \ge 20 C_0+100\delta+1.
\]
An element from $S$ is called a Schottky element. 

\subsection{Pivotal times}\label{sec.pivot}

We recall the pivotal time construction from \cite{G}. 
The setting and  notation are the same as in \cite[Section 4A]{G}.

We start with the definition of chain-shadows.

\begin{definition}
Let $C\ge 0$ and $y, y^+, z \in X$.
The $C$-{\bf chain-shadow} $\mathcal{CS}_y(y^+; C)$ of $y^+$ seen from $y$ is the set of $z \in X$ such that
there exists a $(C, 2C+2\delta+1)$-chain 
$x_0=y, x_1, \dots, x_n=z$ 
and that
$(x_0, x_1)_{y^+}\le C$.
\end{definition}

Let $z_i$, $i\in \N_0$, be fixed isometries, and $a_i$, $b_i$ be Schottky elements from $S$.
Letting $s_i:=a_ib_i$ and $o \in X$,
we define the following sequences of points in $X$:
\[
y_i^-:=z_0 s_1 z_1\cdots s_{i-1}z_{i-1} o,
\quad 
y_i:=z_0s_1z_1\cdots z_{i-1}a_i o,
\quad \text{and}
\]
\[
y_i^+:=z_0s_1z_1 \cdots z_{i-1}a_i b_i o.
\]
We now define a sequence of {\bf pivotal times} $P_n \subseteq\{1, \dots, n\}$ inductively on $n$.
Let $P_0:=\emptyset$.
Suppose that $P_{n-1}$ is defined.
Let 
\[
k:=\max P_{n-1}.
\]
In the case when $P_{n-1}=\emptyset$, we define $k:=0$ and $y_k=o$.

We say that the {\bf local geodesic condition} is satisfied at time $n$ if
\[
(y_k, y_n)_{y_n^-} \le C_0, 
\quad
(y_n^-, y_n^+)_{y_n} \le C_0,
\quad
\text{and}
\quad
(y_n, y_{n+1}^-)_{y_n^+} \le C_0.
\]
If the local geodesic condition is satisfied at time $n$,
we define
\[
P_n:=P_{n-1}\cup\{n\}.
\]
Otherwise, we remove a ``backtrack'' from $P_n$ as follows:
First, consider the largest $m \in P_{n-1}$ such that
\[
y_{n+1}^- \in \mathcal{CS}_{y_m}(y_m^+, C_0+\delta).
\]
Then, we define
\[
P_n:=P_{n-1}\cap\{1, \dots, m\}.
\]
If there is no such $m$, then $P_n:=\emptyset$.
We finished the definition of $P_n$.

We recall the following key estimate, which gives a lower bound 
for $d(o, y_{n+1}^-)$. 
\begin{prop}[Proposition 4.5, \cite{G}]\label{prop.y-p}
For every $n \in \N_0$, 
we have 
\[
d(o, y_{n+1}^-)\ge \#P_n.
\]
\end{prop}

We briefly recall the main idea of the proof of Proposition \ref{prop.y-p}.
Let $k_1, \dots, k_p$ be the pivotal times at $n$.
Then, one shows that the sequence $o, y_{k_1}, y_{k_2}^-, y_{k_2}, \dots, y_{k_p}, y_{n+1}^-$ is a $(2C_0+4\delta, D-2C_0-3\delta)$-chain.
If $D \ge 20C_0+100 \delta+1$, then Lemma \ref{lemma.chain} (ii) is applied to this chain to show the claim.
See \cite[Proposition 4.5]{G} for details.

\medskip

\subsection{Exponential decay}\label{sec.exp}

Let $\mu_S$ be the uniform distribution on  the Schottky set $S$.
We consider independent random variables 
$a_i$, $b_i$, $i \in \N$, with the common distribution $\mu_S$.
Let 
\[
s_i:=a_ib_i.
\]
Note that $s_i$, $i\in \N$, are independent and distributed according to $\mu_S^{\ast 2}$.

We recall an important result from \cite{G}.
For every fixed sequence of isometries $z_0, z_1, \dots$ of $X$,
we have the following.

\begin{prop}[Proposition 4.10, \cite{G}]\label{prop.pivot}
There exists a universal constant $\kappa>0$ such that, for all $n \in \N_0$,
\[
\Prob(\#P_n \le \kappa n)\le e^{-\kappa n}.
\]
\end{prop}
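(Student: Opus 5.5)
We follow Gouëzel's argument in \cite[Section 4A]{G}. The aim is a supermartingale-type bound on $\#P_n$ with constants independent of $\delta$, $C_0$, $D$, $S$, and the fixed isometries $z_i$.

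The main step is a one-step transition estimate for the Markov-like process $(\#P_n)$. Conditional on $(a_1,b_1,\dots,a_n,b_n)$, we show
\[
\Prob(\#P_{n+1}=\#P_n+1 \mid \dots)\ge 1-2\eta, \qquad \eta=1/100.
\]
For this we use the Schottky property three times. First, the condition $(y_k,y_{n+1})_{y_{n+1}^-}\le C_0$ fails for at most $\eta\#S$ choices of $a_{n+1}$; this comes from translating by the isometry $(z_0s_1\cdots z_n)^{-1}$. Second, $(y^-_{n+1},y^+_{n+1})_{y_{n+1}}\le C_0$ fails for at most $\eta\#S$ choices of $b_{n+1}$, using the clause on $s^{-1}$. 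The third condition, involving $y_{n+2}^-$, is deferred to the next step, as in \cite{G}. Failure of the local geodesic condition then has probability at most $2\eta$.

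Next comes the backtracking estimate. On the event that the pivotal set drops, we must show that
\[
\Prob(\#P_{n+1}<\#P_n-j)\le (C\eta)^{j}
\]
for some universal $C$, at least in the form used by Gouëzel. The key is the pivoting trick. At a pivotal time $k$, replace $b_k$ by another Schottky element $b'_k$ with the same $s$-class structure. Such a swap preserves all earlier pivotal data, and the chain-shadow estimates (Lemma \ref{lemma.chain} together with $D\ge 20C_0+100\delta+1$) show that among the pivot choices only an $\eta$-proportion can lead to $y^-_{n+2}$ leaving $\mathcal{CS}_{y_k}(y_k^+,C_0+\delta)$. One realizes this through Gouëzel's equivalence classes of sequences $(s_i)$ obtained by pivoting at pivotal times, and proves that conditionally on a class, $\#P_{n+1}$ dominates a random walk on $\Z$ with step law $A$, where
\[
\Prob(A=1)=9/10, \qquad \Prob(A=-j)=(1/10)^{j+1}\ \text{for } j\ge 0.
\]
This distribution is universal because $\eta=1/100$ is fixed.

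Finally, a Chernoff bound for sums of i.i.d.\ copies of $A$, which have positive mean and exponential tails, gives
\[
\Prob\Bigl(\sum_{i\le n}A_i\le \kappa n\Bigr)\le e^{-\kappa n}
\]
with universal $\kappa$. Combined with the stochastic domination, this yields $\Prob(\#P_n\le\kappa n)\le e^{-\kappa n}$.

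The main obstacle is the domination step: verifying that the pivoting classes are well defined and that, within a class, the conditional law of the decrement has geometric tail with ratio controlled only by $\eta$. Here the geometric hypotheses $D\ge 20C_0+100\delta+1$ and $\eta=1/100$ must be fed through Lemma \ref{lemma.chain} carefully. We intend to follow \cite[Lemmas 4.6--4.9]{G} essentially verbatim, checking at each step that no constant depends on anything but $\eta$.
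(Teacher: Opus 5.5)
Your overall route is the one the paper relies on. The paper quotes \cite[Proposition 4.10]{G}, and Proposition \ref{prop.kappa} combines Gou\"ezel's domination lemma \cite[Lemma 4.9]{G} with exactly your Chernoff step. However, your sketch of the domination step, which you rightly call the main obstacle, fails at its key point.

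\textbf{The pivot you choose does not preserve the pivotal set.} If you swap $b_k$ and keep $a_k$, then $y_k$ stays put, while $y_k^+$ and every later point move by an isometry that does not fix $y_k$. Later tests refer back to $y_k$: the condition $(y_k,y_j)_{y_j^-}\le C_0$ at the next pivotal time $j$, and the backtracking tests $y^-_{j+1}\in\mathcal{CS}_{y_k}(y_k^+,C_0+\delta)$ for $k<j\le n$. These can flip, so $k$ itself or later pivotal times may disappear. Preserving the earlier pivotal data is not enough, and the classes are then not the product sets on which the conditional-uniformity argument rests.

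The pivot has to be $a_k$, with all $b_i$ frozen. Then $y_k$, $y_k^+$ and everything after move by a single isometry, so all decisions after time $k$ are unchanged. Only the two conditions at time $k$ that involve $y_k^-$ constrain $a_k$, which leaves at least $(1-2\eta)\#S$ admissible values, independently over the pivotal times.

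\textbf{With the correct pivot, your backtracking mechanism is wrong.} Membership of $y^-_{n+2}$ in $\mathcal{CS}_{y_k}(y_k^+,C_0+\delta)$ does not depend on $a_k$ at all, so it is not the pivot at $k$ that keeps the walk in that chain-shadow. The correct bookkeeping is as follows.
\begin{enumerate}[(a)]
\item Losing $k_p=\max P_n$ is paid for by the failure of the local geodesic condition at time $n+1$. This event is determined by non-pivot data and $s_{n+1}$.
\item Losing each earlier $k_i$ requires the pivot $a_{k_{i+1}}$ at the \emph{next} pivotal time to land in a Schottky-exceptional set. Seen from $y^-_{k_{i+1}}$, the point $y^-_{n+2}$ is $a_{k_{i+1}}$ applied to a point independent of $a_{k_{i+1}}$. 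So for all but $O(\eta)\#S$ values it extends the chain from $y_{k_i}$ through $y^-_{k_{i+1}}$, and stays in $\mathcal{CS}_{y_{k_i}}(y_{k_i}^+,C_0+\delta)$.
\item Conditional independence of the pivots within a class then gives $\Prob(\#P_{n+1}<\#P_n-j\mid\cdot)\le 10^{-(j+1)}$ for all $j\ge 0$.
\end{enumerate}

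\textbf{Two smaller corrections.}
\begin{enumerate}[(1)]
\item The third local geodesic condition cannot be deferred, because it is part of the definition of $P_{n+1}$. Since $z_{n+1}$ is fixed, it reads $(b_{n+1}^{-1}o,z_{n+1}o)_o\le C_0$; this is where the $s^{-1}$-clause is used. The second condition, $(a_{n+1}^{-1}o,b_{n+1}o)_o\le C_0$, uses the $s$-clause for $b_{n+1}$. The one-step success probability is therefore $\ge 1-3\eta$, not $1-2\eta$; this is still at least $9/10$.
\item Your step law is not a probability measure, since $\frac{9}{10}+\sum_{j\ge0}10^{-(j+1)}=\frac{91}{90}$. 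The law used is $\Prob(U=1)=\frac{9}{10}$ and $\Prob(U=-j)=9\cdot 10^{-(j+1)}$ for $j\ge1$, with no mass at $0$. It has $\E U=\frac{71}{90}>0$, after which your Chernoff step goes through.
\end{enumerate}
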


For later use, we record a more explicit form of 
Proposition \ref{prop.pivot}.  

\begin{prop}\label{prop.kappa} 
There exists a universal constant $\kappa'>0$ such that, for all $n \in \N_0$,
\[
\Prob\left(\#P_n \le (1/2)n\right) \le e^{-\kappa' n}.
\]
\end{prop}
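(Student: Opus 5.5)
The plan is to go back into Gouëzel's proof of Proposition \ref{prop.pivot} (\cite[Section 4A]{G}) and make the constant explicit. The key point is that $\#P_n$ is stochastically dominated from below by a random walk on $\Z$ whose law is universal: it depends only on the Schottky parameter $\eta=1/100$, not on $X$, $S$, or the $z_i$.

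Concretely, Gouëzel shows the following. Condition on the past (more precisely, on the equivalence classes of choices he uses in the resampling argument). Then, with conditional probability at least $1-2\eta\ge 9/10$, the local geodesic condition holds at time $n$, and $\#P_n=\#P_{n-1}+1$. On the complementary event, the number of pivotal times lost is controlled by a tail bound of the form
\[
\Prob(\#P_n< \#P_{n-1}-j)\le (C\eta)^{j+1}
\]
for a universal $C$; in \cite{G} this is $(1/10)^{j+1}$ after using $\eta\le 1/100$. This is exactly the content of \cite[Lemmas 4.6--4.9]{G}. Hence $\#P_n$ stochastically dominates $A_1+\cdots+A_n$, where the $A_i$ are i.i.d.\ with $\Prob(A_i=1)=9/10$ and $\Prob(A_i=-j)=(9/10)(1/10)^{j+1}$ for $j\ge 0$.

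The next step is to compute $\E A_i$. The positive part contributes $9/10$. The negative part contributes
\[
\sum_{j\ge1} j\,(9/10)(1/10)^{j+1}=(9/100)\sum_{j\ge 1} j(1/10)^j=(9/100)(10/81)=1/90.
\]
So $\E A_i=9/10-1/90=80/90>1/2$. With these same constants, Gouëzel obtains $\kappa$ by a Chernoff bound. I will apply the Chernoff bound at level $1/2$ instead of at a small level. Since $A_i$ has exponential tails, $\E e^{-tA_i}<\infty$ for small $t>0$. Because $\E A_i>1/2$, the function $\phi(t)=\log\E e^{-t(A_i-1/2)}$ satisfies $\phi(0)=0$ and $\phi'(0)<0$. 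Hence there is a universal $t_0>0$ with $\phi(t_0)<0$. Markov's inequality then gives
\[
\Prob\Big(\sum A_i\le n/2\Big)\le e^{n\phi(t_0)},
\]
and I set $\kappa'=-\phi(t_0)$.

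The main obstacle is checking that the domination in \cite{G} really gives the stated distribution with success probability as large as $9/10$, and not merely some small universal positive drift. With $\eta=1/100$, the fraction of good Schottky choices for each of the three Gromov product conditions is at least $1-\eta$. In \cite{G} the conditions involving $a_n$ and $b_n$ are handled separately, so the bad probability is at most $2\eta$ or $3\eta$, leaving a success probability of at least $0.97$. The backtracking tail uses the same $\eta$ with a factor like $2\eta\cdot 5$. If the constants in \cite{G} turn out worse, I will redo the computation with success probability $1-3\eta$ and geometric ratio $10\eta$. For $\eta=1/100$ these still give a mean well above $1/2$, so the argument is robust.
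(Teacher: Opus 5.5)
Your route is the paper's: take Gouëzel's stochastic domination of the increments of $\#P_n$ (\cite[Lemma 4.9]{G}), iterate it to dominate $\#P_n$ by an i.i.d.\ sum, note that the mean exceeds $1/2$, and apply a Chernoff bound at level $1/2$.

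There is one bookkeeping slip in the dominating law. The bounds you quote are $\Prob(\#P_n<\#P_{n-1}-j)\le 10^{-(j+1)}$ for $j\ge 0$ and $\Prob(\#P_n=\#P_{n-1}+1)\ge 9/10$. These give domination by $U$ with $\Prob(U=1)=9/10$, $\Prob(U=0)=0$, and $\Prob(U=-j)=9\cdot 10^{-(j+1)}$ for $j\ge 1$, which is exactly the law in \cite[Lemma 4.9]{G}. They do not give domination by your $A_i$. Your $A_i$ puts mass $9/100$ at $0$ and has $\Prob(A_i<0)=1/100$, so it would require $\Prob(\#P_n<\#P_{n-1})\le 1/100$. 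The $j=0$ case of your quoted bound only gives $\le 1/10$.

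The correct mean is $\E U=9/10-1/9=71/90$, not $8/9$. Since $71/90>1/2$ and $U$ still has geometric negative tails, your Chernoff step goes through verbatim, so the conclusion is unaffected.

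You also do not need to reopen \cite[Lemmas 4.6--4.8]{G} or re-derive the success probability (your $0.97$ estimate is not what is stated there). Lemma 4.9 as stated already supplies the explicit law, and that is all the paper uses.
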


\begin{proof}
By \cite[Lemma 4.9]{G}, for all $i \in \Z$,
we have
\[
\Prob(\#P_{n+1}\ge i)\ge \Prob(\#P_n+U \ge i),
\]
where $U$ is a random variable independent from $\#P_n$ distributed as
\[
\Prob(U=-j)=9\left(\frac{1}{10}\right)^{j+1} \ \text{for $j>0$}, \
\Prob(U=0)=0 \ \text{and} \
\Prob(U=1)=\frac{9}{10}. 
\]
Thus, letting $U_i$, $i\in \N$, be independent copies of $U$, independently from the sequence $\# P_n$, $n\in \N_0$, for each $i\in \Z$ and each $n \in \N$, we have
\[
\Prob(\# P_n\ge i)\ge \Prob(U_1+\cdots+U_n \ge i).
\]
Therefore, for every $r>0$ and every $n \in \N_0$, we have
\[
\Prob(\# P_n \le r n) \le \Prob(U_1+\cdots+U_n \le r n).
\]
We show that for every $r\in (0, \E U)$, there exists $c_r>0$ such that
for all $n \in \N_0$,
\begin{equation}\label{eq:ldp}
\Prob(U_1+\cdots+U_n \le r n) \le e^{-c_r n}.
\end{equation}
This implies the claim by taking $r=1/2<\E U=71/90$.

Let us show \eqref{eq:ldp}.
For every $r \in (0, \E U)$, define $\e:=\E U-r>0$.
For every $t>0$, by the Markov inequality and the independence of $U_i$, $i\in \N$, we have the following:
\begin{align*}
\Prob\Big(U_1+\cdots+U_n \le rn\Big) 
&=\Prob\Big(-\sum_{i=1}^n (U_i-\E U) \ge \e n\Big)\\
&\le e^{-t \e n}\Big(\E e^{-t(U-\E U)}\Big)^n.
\end{align*}
Since $\E e^{-t (U-\E U)}=e^{o(t)}$,
for a small enough $t>0$ with $\e_t:=\e -o(t)/t>0$,
the last term in the above displayed inequality is at most $\exp(-t\e_t n)$ for all $n \in \N_0$.
Setting $c_r:=t \e_t>0$ yields \eqref{eq:ldp}.
This completes the proof.
\end{proof}

\subsection{Explicit lower bounds for drifts using Schottky sets}\label{sec.lower_drift}

Let $\mu$ be a countably supported probability measure on $\Isom (X)$.
Let $g_0, g_1, \dots$ be an i.i.d.\ sequence of random variables with the distribution $\mu$.
Recall that a $\mu$-random walk $(w_n)_{n\in \N_0}$ on $\Isom (X)$ starting from the identity $\id$ is defined by
\[
w_n:=g_0g_1\cdots g_{n-1} \quad \text{for $n \in \N$} \quad \text{and} \quad w_0:=\id.
\]
We define the underlying probability space as follows. 
Let $G$ be the countable semigroup in $\Isom (X)$ generated by the support of $\mu$.
We define $\Omega:=G^{\N_0}$ and endow $\Omega$ with the sigma-algebra generated by the cylinder sets.
Let $\Prob_\mu$ be the distribution on $\Omega$ defined as the pushforward of $\mu^{\otimes \N_0}$ by the map $(g_n)_{n \in \N_0} \mapsto (w_n)_{n\in \N_0}$.

The goal of this subsection is to prove the following proposition.
Recall that $\mu_S$ denotes the uniform distribution on a set $S$.

\begin{prop}\label{prop.drift}
Let $\delta, C_0, D\ge 0$, $Q\in \N$, and $\theta \in (0, 1]$.
Let $(X, d)$ be a $\delta$-hyperbolic space.
Fix an arbitrary $o \in X$.
Let $S \subset \Isom (X)$ be a $(1/100, C_0, D)$-Schottky set at $o \in X$ such that
$D \ge 20 C_0+100\delta+1$,
and $\mu$ be a countably supported probability measure  on $\Isom (X)$ with $\mu^{\ast Q} \ge \theta \mu_S^{\ast 2}$.

Then there exists a constant $c_\theta>0$ such that for all $n \ge 4Q/\theta$,
we have
\[
\Prob_\mu\left(\frac{1}{n}d(o, w_n o) \le \frac{\kappa\theta}{4Q} \right)\le 2 \exp\left(-\frac{c_\theta}{4Q} n\right).
\]
In particular,
for $\Prob_\mu$-almost every $(w_n)_{n \in \N_0} \in G^{\N_0}$,
we have
\[
\liminf_{n \to +\infty}\frac{1}{n}d(o, w_n o) \ge \frac{\kappa\theta}{4Q},
\]
where $\kappa$ is the universal constant in Proposition \ref{prop.pivot}.
\end{prop}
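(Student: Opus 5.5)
We will follow Gouëzel's decomposition argument from \cite[Section 4]{G}, keeping track of constants. Since $\mu^{\ast Q}\ge \theta\mu_S^{\ast 2}$, we write $\mu^{\ast Q}=\theta\mu_S^{\ast 2}+(1-\theta)\nu$ for a probability measure $\nu$. The random walk sampled at times that are multiples of $Q$ can then be realised as follows. Take i.i.d.\ Bernoulli$(\theta)$ variables $\epsilon_j$, independent pairs $(a_j,b_j)$ with law $\mu_S\otimes\mu_S$, and independent $\nu$-distributed elements $\gamma_j$. Put $w_{jQ}$-increments equal to $a_jb_j$ if $\epsilon_j=1$ and to $\gamma_j$ otherwise. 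Grouping maximal runs of increments with $\epsilon_j=0$ into the isometries $z_i$, the product $w_{kQ}$ takes the form $z_0s_1z_1\cdots s_{m}z_{m}$, where $m=m(k)=\#\{j\le k:\epsilon_j=1\}$. We then condition on $(\epsilon_j)$ and $(\gamma_j)$, so that the $z_i$ are fixed. In this conditional setting the $a_i,b_i$ are still i.i.d.\ uniform on $S$, and Propositions \ref{prop.y-p} and \ref{prop.pivot} apply verbatim: on the event $\#P_m>\kappa m$ we get $d(o,y^-_{m+1})\ge \kappa m$. Note that $y^-_{m+1}=z_0s_1\cdots s_mz_m o=w_{kQ}o$.

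For the fixed-time statement we take $n$, set $k=\lfloor n/Q\rfloor$, and handle the remainder $r=n-kQ<Q$. The cleanest route is to absorb the remainder into $z_0$. Write $w_n=g_0\cdots g_{r-1}\cdot(\text{product of }k\text{ blocks of length }Q)$, and decompose the blocks as above; the prefix $g_0\cdots g_{r-1}$ just multiplies $z_0$ on the left. This is allowed because Proposition \ref{prop.pivot} holds for \emph{arbitrary} fixed $z_i$, including a random prefix independent of the Schottky letters. Then
\[
\Prob_\mu\big(d(o,w_no)\le \kappa m\big)\le e^{-\kappa m}
\]
conditionally on $m$. Next we control $m\sim\mathrm{Bin}(k,\theta)$ with a Chernoff bound: $\Prob(m\le \theta k/2)\le e^{-\theta k/8}$. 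For $n\ge 4Q/\theta$ we have $k\ge n/(2Q)$, so $\theta k/2\ge \theta n/(4Q)$. On the complementary event we get $d(o,w_no)\ge \kappa\theta n/(4Q)$ except with probability $e^{-\kappa\theta n/(4Q)}$. Summing the two error terms gives a bound $2\exp(-c_\theta n/(4Q))$ with $c_\theta:=\min\{\kappa\theta,\theta/2\}$, or similar. The required condition $D\ge 20C_0+100\delta+1$ is exactly what Proposition \ref{prop.y-p} uses.

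The almost sure statement follows from Borel--Cantelli, since the bounds are summable in $n$. The main obstacle is the bookkeeping in the coupling. We must check that conditioning on the Bernoulli pattern and the $\nu$-increments leaves the Schottky letters i.i.d.\ uniform and independent of the $z_i$; this is immediate from the product construction. We must also make sure that expressing $w_n$ rather than $w_{kQ}$ in the pivotal form loses nothing. Placing the remainder on the left inside $z_0$, instead of on the right, avoids having to bound $d(w_{kQ}o,w_no)$, which has no moment control.
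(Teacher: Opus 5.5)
Your argument is correct and follows essentially the same route as the paper. Both decompose $\mu^{\ast Q}=\theta\mu_S^{\ast 2}+(1-\theta)\mu_{\rm aux}$, condition on everything except the Schottky letters, apply Propositions \ref{prop.pivot} and \ref{prop.y-p}, and bound the number of Schottky blocks by a large-deviation estimate; the paper does this via sums of the geometric gaps $t_{j+1}-t_j$ rather than your binomial Chernoff bound, and places the remainder on the right as the final letter $z'(n)$, which the pivotal construction handles just as well, so right placement would not require bounding $d(w_{kQ}o,w_no)$ either. One arithmetic slip: $\theta k/8\ge \theta n/(16Q)$ gives $c_\theta=\min\{\kappa\theta,\theta/4\}$ rather than $\theta/2$, which is harmless since only the existence of some $c_\theta>0$ depending on $\theta$ is claimed.
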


This proposition is a quantitative  version of the statement of \cite[Theorem 1.1]{G}. 
We track the constants in the proof to obtain an explicit, uniform exponential estimate, which in particular gives a lower bound for the drift. 
We give a self-contained proof to make the dependence on the constants 
clear, which are important for our argument.

In particular,  this allows us to relate the drift of a random walk to the weak acylindricity of the group action. 
Note that Proposition \ref{prop.drift} holds for all $(1/100, C_0, D)$-Schottky set $S$ at an arbitrary $o$ with $D\ge 20 C_0+100\delta+1$.
 In the next section, we apply the proposition to a carefully constructed Schottky set with a chosen basepoint $o$.

\medskip

Before proving Proposition  \ref{prop.drift},
we describe an alternative  realization of a $\mu$-random walk.
This realization will be used in the proof.

By assumption,  $S$ is a $(1/100, C, D)$-Schottky set at $o$ in $\Isom (X)$
such that  there exist $Q \in \N$ and $\theta \in (0, 1]$ with $\mu^{\ast Q}\ge \theta \mu_S^{\ast 2}$.
Hence, we have an auxiliary measure $\mu_{\rm aux}$ satisfying
\[
\mu^{\ast Q}=\theta \mu_S^{\ast 2}+(1-\theta)\mu_{\rm aux}.
\]
Let $(\e_i)_{i\in \N_0}$ be a sequence of i.i.d.\ Bernoulli random variables with 
\[
\Prob(\e_i=1)=\theta
\quad \text{and} \quad 
\Prob(\e_i=0)=1-\theta.
\]
Let $a_i$, $b_i$, $i \in \N_0$, be random variables distributed according to $\mu_S$,
and $h_i$, $i\in \N_0$, be random variables distributed according to $\mu_{\rm aux}$.
We assume that all random variables are independent and defined on a common probability space.
For each $i \in \N_0$,
let 
\[
\gamma_i:=
\begin{cases}
a_i b_i & \text{if $\e_i=1$},\\
h_i & \text{if $\e_i=0$}.
\end{cases}
\]
Note that $\gamma_i$, $i\in \N_0$, are independent and have the same distribution $\mu^{\ast Q}$.
Enlarging the probability space, we define all random variables $\e_i$, $a_i$, $b_i$, $h_i$, and $g_i$, $i\in \N_0$, on a common probability space so that
$\gamma_i=g_{Qi}\cdots g_{Q(i+1)-1}$ for each $i \in \N_0$ almost surely (cf.\ \cite[Lemma A1]{BP}).

Let $t_1, t_2, \cdots$, be the times $i$ when $\e_i=1$.
Fix $n \in \N_0$.
Let $\tau(n)$ be the largest $j \in \N$ such that $Q(t_j+1)\le n$ so that $[Q t_j, Q(t_j+1))\subset[0, n)$.
We define $\tau(n):=0$ if there is no such $j$.
Furthermore,
let
\[
s_j':=a_j'b_j' \quad \text{where $a_j':=a_{t_j}$ and $b_j':=b_{t_j}$ for $j \in \N$},
\]
\[
z_j:=g_{Q(t_j+1)}\cdots g_{Q t_{j+1}-1}
\quad \text{and} \quad
z'(n):=g_{Q(t_{\tau(n)}+1)}\cdots g_{n-1} \quad \text{for $j \in \N_0$}.
\]
In the above,
we understand $t_0:=-1$, and $z_0=\id$ if $t_1=0$.
This construction yields
\[
w_n o=z_0 s_1' z_1\cdots z_{\tau(n)-1} s_{\tau(n)}' z'(n) o.
\]

Associated to the sequence $z_0, s_1'=a_1'b_1', z_1, s_2'=a_2'b_2', \dots, z'(n)$,
we define the sequence of pivotal times 
\[
P_1, P_2, \dots, P_{\tau(n)-1}, P_{\tau(n)}^{(n)}.
\]
Note that $P_{\tau(n)}^{(n)}$ depends on $n$, so that we have added the superscript $(n)$ to $P_{\tau(n)}$ to indicate its dependency.
(For $j<\tau(n)$, the construction of $P_j$ does not involve $n$.)

We start the proof. 
\proof[Proof of Proposition \ref{prop.drift}]

Recall that $t_1, t_2, \dots$ are the times $i$ when $\e_i=1$ for $i\in \N_0$.
Since $\e_i$ are i.i.d.\ $(\{0, 1\}, \theta)$-Bernoulli random variables,
the random variables $t_{j+1}-t_j$, $j\in \N_0$, where $t_0=-1$, are geometric with parameter $\theta$ and independent.
Therefore, 
a direct estimate as in the proof of Proposition \ref{prop.kappa}
yields the following:
For every $\e>0$, there exists a constant $I_{\e, \theta}>0$ such that for every $j\in \N$,
we have
\[
\Prob\Bigg(t_j-t_0\ge \Big(\frac{1}{\theta}+\e\Big) j\Bigg)=\Prob\Bigg(\sum_{i=0}^{j-1}(t_{i+1}-t_i)\ge \Big(\frac{1}{\theta}+\e\Big) j\Bigg) \le e^{-I_{\e, \theta} j}.
\]

Fix $\e:=1/\theta$. 
Let $\beta:=\theta/(4Q)$.
Then, we have
\[
t_{\cl{\beta n}}+1 \le \frac{2}{\theta} \cl{\beta n} \implies Q(t_{\cl{\beta n}}+1) \le \frac{2Q}{\theta} \cl{\beta n} \le \frac{2Q}{\theta} (\beta n+1) \le n,
\]
for all $n \ge 4Q/\theta$,
whence $\tau(n) \ge \cl{\beta n}$.
Letting $I_{\theta}:=I_{\e, \theta}$ with $\e=1/\theta$,
we have, for every $n \in \N_0$,
\begin{equation}\label{eq:tau}
\Prob(\tau(n)<\cl{\beta n})\le e^{-I_\theta \cl{\beta n}}.
\end{equation}

We first prove the following:
there exists a constant $c_\theta>0$ such that
for all $n \ge 4Q/\theta$,
we have
\begin{equation}\label{eq.prop.un}
\Prob\left(\# P_{\tau(n)}^{(n)} \le \frac{\kappa \theta}{4Q} n \right) \le 2\exp\left(-\frac{c_\theta}{4Q} n\right).
\end{equation}

To show \eqref{eq.prop.un}, we use Proposition \ref{prop.pivot}.
This proposition applies once $\tau(n)$, $(t_i)_{i \le \tau(n)}$, $(z_j)_{j<\tau(n)}$, and $z'(n)$ are fixed.
Conditioned on $(\e_i)_{i \in \N_0}$ and $g_i$ for $i \notin [Q t_j, Q(t_j+1))$ where $j\in \N$,
these random variables are fixed and we are in the setting to apply the proposition.
Let $\mathcal{E}$ denote the sigma-algebra generated by 
\[
\e_i \quad \text{for all $i \in \N_0$}, \quad \text{and} \quad g_i \quad \text{for all $i\notin \bigcup_{j=1}^{\tau(n)} \big[Q t_j, Q(t_j+1)\big)$}.
\]

Let $u_n:=\# P_{\tau(n)}^{(n)}$.
By Proposition \ref{prop.pivot},
there exists a universal constant $\kappa>0$ such that
for all $n \in \N_0$, almost surely
\[
\Prob\left(u_n \le \kappa \tau(n) \mid \mathcal{E}\right) \le e^{-\kappa \tau(n)}.
\]
Restricting on the event $\tau(n)\ge \cl{\beta n}$, 
we have almost surely
\begin{equation*}
\Prob\left(u_n \le \kappa \cl{\beta n} \mid \mathcal{E}\right)\le \Prob\left(u_n \le \kappa \tau(n) \mid \mathcal{E}\right) \le e^{-\kappa \tau(n)} \le e^{-\kappa \cl{\beta n}}.
\end{equation*}
Thus, for every $n \in \N_0$, we have
\begin{equation}\label{eq:un}
\Prob\left(u_n \le \kappa \cl{\beta n}, \tau(n) \ge \cl{\beta n}\right) \le e^{-\kappa\cl{\beta n}}.
\end{equation}
Therefore, combining \eqref{eq:tau} and \eqref{eq:un}, 
and  using $\beta=\theta/(4Q)$, we obtain 
\begin{align*}
\Prob\Big(u_n \le \kappa \cl{\beta n}\Big)
&\le \Prob\Big(\tau(n)<\cl{\beta n}\Big)+\Prob\Big(u_n \le \kappa \cl{\beta n}, \tau(n)\ge \cl{\beta n}\Big)\\
&\le e^{-I_\theta \cl{\beta n}}+e^{-\kappa \cl{\beta n}} \le 2 \exp\left(-\frac{c_\theta}{4Q}n\right),
\end{align*}
where 
$$c_\theta:=\theta\min\{I_\theta, \kappa\}>0.$$
Note that $c_\theta$ depends only on $\theta$ since $I_\theta$ depends only on $\theta$, and $\kappa$ is a universal constant.
This shows \eqref{eq.prop.un}.

Applying Proposition \ref{prop.y-p} with $w_n o=y_{\tau(n)+1}^-$,
for every $n \in \N_0$, we have almost surely 
\[
d(o, w_n o) \ge \#P_{\tau(n)}^{(n)}.
\]
By \eqref{eq.prop.un},
there exists a constant $c_\theta>0$ such that for all $n \ge 4Q/\theta$,
we have
\[
\Prob_\mu\left(d(o, w_n o) \le \frac{\kappa \theta}{4Q} n \right)\le \Prob\left(\#P_{\tau(n)}^{(n)} \le \frac{\kappa \theta}{4Q} n \right) \le 2 \exp\left(-\frac{c_\theta}{4Q} n\right).
\]
This shows the desired inequality. 

The final assertion follows from this inequality by the Borel-Cantelli lemma.
\qed

\section{Main results: Uniform lower bounds for drifts}\label{sec.drift}

Let $(X, \delta)$ be a $\delta$-hyperbolic space.
Suppose that $\mu$ is a probability measure on $\Isom(X)$ with a finite first moment with respect to $(X, d)$,
i.e., 
\[
\sum_{g \in \supp \mu} \mu(g)d(o, g o)<+\infty.
\]
Here $\supp \mu$ denotes the support of $\mu$.
The condition that $\mu$ has a finite first moment does not depend on the choice of $o$ by the triangle inequality.

We consider a $\mu$-random walk $(w_n)_{n \in \N_0}$ on $\Isom(X)$.
Recall that the drift $\ell(\mu)$ is defined by
$\ell(\mu)=\lim_{n \to+\infty}(1/n)\E d(o, w_n o)$.
Note that $\ell(\mu)$ does not depend on the choice of $o \in X$ by the triangle inequality.
By the Kingman subadditive ergodic theorem,
the following limit exists $\Prob_\mu$-almost surely and coincides with $\ell(\mu)$,
i.e., for $\Prob_\mu$-almost every $(w_n)_{n \in \N_0}$,
\[
\ell(\mu)=\lim_{n \to +\infty}\frac{1}{n}d(o, w_n o).
\]

The following lemma will be used to reduce the general case to the case when $1 \in \supp \mu$
to obtain a lower bound on $\ell(\mu)$.
Let $\delta_1$ denote the probability measure supported at the identity element $1$.

\begin{lemma}\label{lemma.lazy}
Let $\mu$ be a countably supported probability measure of finite first moment on $\Isom(X)$. Set
$\widetilde \mu:=(\mu+\delta_1)/2$.
Then,
we have
$\ell(\widetilde \mu)=\ell(\mu)/2$.
\end{lemma}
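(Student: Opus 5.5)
The plan is to realize the $\widetilde\mu$-random walk as a time change of the $\mu$-random walk, and then read off the drift from the almost sure (Kingman) description of $\ell$.

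\textbf{Coupling.} Write $\widetilde g_i=g_{N_i}$ or $1$ according to an independent fair coin $\e_i\in\{0,1\}$. Here $(\e_i)$ is i.i.d.\ Bernoulli$(1/2)$, independent of an i.i.d.\ $\mu$-sequence $(g_k)$, and $N_i$ counts the successes among $\e_0,\dots,\e_{i-1}$. Then $(\widetilde g_i)$ is i.i.d.\ with law $\widetilde\mu$, and the corresponding walk satisfies
\[
\widetilde w_n=w_{N(n)},\qquad N(n)=\sum_{i<n}\e_i .
\]

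\textbf{Finite first moment.} The measure $\widetilde\mu$ has finite first moment, since
\[
\sum_g\widetilde\mu(g)\,d(o,go)=\tfrac12\sum_g\mu(g)\,d(o,go).
\]
Therefore Kingman's theorem applies to both walks. Almost surely we have $\frac1k d(o,w_k o)\to\ell(\mu)$ and $\frac1n d(o,\widetilde w_n o)\to\ell(\widetilde\mu)$.

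\textbf{Combining the limits.} By the strong law of large numbers, $N(n)/n\to 1/2$ almost surely, and in particular $N(n)\to\infty$. On the full-measure event where all three limits hold, we write
\[
\frac1n d(o,\widetilde w_n o)=\frac{N(n)}{n}\cdot\frac{1}{N(n)}\,d(o,w_{N(n)}o).
\]
The right-hand side converges to $\tfrac12\ell(\mu)$. Identifying the two limits gives $\ell(\widetilde\mu)=\ell(\mu)/2$.

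\textbf{Main obstacle.} The only delicate point is the composition of the limit along the random subsequence $N(n)$. This is harmless because $N(n)$ is nondecreasing and tends to infinity, so every term of the sequence $\frac1k d(o,w_k o)$ is eventually visited in a tail.

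\textbf{Alternative via expectations.} One could instead avoid almost-sure limits and work with $\E d(o,\widetilde w_n o)=\E f(N(n))$, where $f(k)=\E d(o,w_k o)$. This route needs $f(k)\le k\,\E d(o,g_0 o)$ together with concentration of $N(n)$ to justify dominated convergence of $f(N(n))/n$. I would use the almost-sure route as the primary argument.
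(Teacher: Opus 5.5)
Your proposal is correct and follows the same route as the paper: realize the $\widetilde\mu$-walk as the time change $w_{N(n)}$ of the $\mu$-walk by fair coin flips, and use the strong law $N(n)/n\to 1/2$ almost surely. You also spell out what the paper leaves implicit, namely finiteness of the first moment of $\widetilde\mu$ and the use of Kingman's theorem to identify both almost sure limits with $\ell(\mu)$ and $\ell(\widetilde\mu)$; for passing to the random subsequence, only $N(n)\to\infty$ is needed, not monotonicity.
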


\proof
Let $(\chi_n)_{n \in \N_0}$ be an i.i.d.\ $(\{0, 1\}, 1/2)$-Bernoulli random variables,
and $t_n:=\chi_1+\cdots+\chi_n$.
Then, a $\widetilde \mu$-random walk $(\widetilde w_n)_{n \in \N_0}$ has the same distribution as that of $(w_{t_n})_{n \in \N_0}$ where $(w_n)_{n \in \N_0}$ is a $\mu$-random walk.
Since $t_n/n \to 1/2$, as $n \to \infty$, almost surely by the strong law of large numbers,
we have $\ell(\widetilde \mu)=\ell(\mu)/2$ by definition of the drift.
\qed

\subsection{Main theorem}

We now state our main theorem, from which Theorem \ref{thm.intro_meta} follows. 
Here $K_0$ denotes the absolute constant in Lemma \ref{lemma.independent.2}.

\begin{thm}\label{thm.meta}
Let $\delta, \Dc \ge 0$, $N, q \in \N$, and $\theta_0\in (0, 1]$.
Then there exist constants $\kappa_\star(\delta, \Dc, N, q, \theta_0)>0$ and $c_\star(\delta, \Dc, N, q, \theta_0)>0$ such that the following holds.

Let $(X, d)$ be a $\delta$-hyperbolic geodesic space.
Let $U$ be a finite symmetric set in $\Isom (X)$ with $1 \in U$. 
Assume 
$L(U^N) \ge K_0 \delta+1$. Assume that the action of $\<U\>$  on $X$
is  non-elementary and $\Dc$-weakly acylindrical.

Let $\mu$ be a countably supported probability measure on $\Isom (X)$
with 
$$\min_{u\in U}\mu^{\ast q}(u)\ge \theta_0.$$

Then, for all large enough $n$,
we have
\[
\Prob_\mu\Bigg(\frac{1}{n L(U)}d(o, w_n o) \le \kappa_\star(\delta, \Dc, N, q, \theta_0) \Bigg)\le  \exp\Bigg(-c_\star(\delta, \Dc, N, q, \theta_0) n\Bigg).
\]
Consequently, if $\mu$ has a finite first moment, then
\[
\ell(\mu)\ge \kappa_\star(\delta, \Dc, N, q, \theta_0) L(U).
\]
Moreover, there exists $M\in \N$ depending only on $\delta$, $\Dc$, and $N$, such that
\[
\kappa_\star(\delta, \Dc, N, q, \theta_0)=\frac{\theta_0^{2M}}{16qM}.
\]
\end{thm}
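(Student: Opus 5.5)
The plan is to combine Proposition \ref{prop.schottky} (a geometric Schottky set at a well-chosen basepoint $o$) with Proposition \ref{prop.drift} (the quantitative pivotal-time estimate), after rescaling the metric by $L(U)+1$.

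\emph{Step 1: verify the hypotheses of Proposition \ref{prop.schottky}.} Since $L(U^N)\ge K_0\delta+1$, Lemma \ref{lemma.independent.2} applied to $U^N$ gives $g\in U^{2N}$ with $\tr{g}\ge L(U^N)+4000\delta+1\ge L(U)+4000\delta+1$, using Lemma \ref{lemma.disp}\eqref{item.UV} and $1\in U$. So hypothesis \eqref{assump.h} holds with $2N$ in place of $N$. Non-elementarity gives \eqref{assump.action}, since a non-elementary group cannot preserve the two fixed points of $g$. Weak acylindricity is \eqref{assump.wa}. Proposition \ref{prop.schottky} then yields $M=M(\delta,\Dc,2N)$, a point $o$, and a $(1/100,C,D)$-Schottky set $S\subset U^M$ with $\#S\ge 200$. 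Here $C$ and $D$ are constants (depending on $\delta,\Dc,N$) times $L(U)+1$, and $D\ge 20C+100\delta+L(U)$.

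\emph{Step 2: rescale the metric.} Pass to $X'=(X,d/(L(U)+1))$, which is $\delta'$-hyperbolic with $\delta'=\delta/(L(U)+1)\le\delta$. In $X'$ the set $S$ is $(1/100,C',D')$-Schottky with $C'=C/(L(U)+1)$ and $D'=D/(L(U)+1)$. The inequality $D\ge 20C+100\delta+L(U)$ does not by itself give $D'\ge 20C'+100\delta'+1$, because of the $+1$. I will fix this by enlarging the constant $D(\delta,\Dc,N)$, for instance by taking $r$ a bit larger in the proof of Proposition \ref{prop.schottky} so that $D\ge 20C+100\delta+L(U)+1$. Alternatively, Gouëzel's argument only needs some positive additive slack, which then yields progress at least that slack per pivot; I would track that slack instead.

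\emph{Step 3: the measure comparison.} Each $s\in S\subset U^M$ is a product of exactly $M$ elements of $U$, using $1\in U$ to pad. Hence $\mu^{\ast qM}(s)\ge\theta_0^M$, and each $s_1s_2$ with $s_i\in S$ satisfies $\mu^{\ast 2qM}(s_1s_2)\ge\theta_0^{2M}$. Since $\mu_S^{\ast2}(x)\le 1$ and each $x$ in its support is such a product, this gives $\mu^{\ast Q}\ge\theta\,\mu_S^{\ast 2}$ with $Q=2qM$ and $\theta=\theta_0^{2M}\cdot(\#S)^{-2}$. The factor $(\#S)^{-2}$ is harmless because $\#S=2^{n+1}$ with $n=\cl{\log_2 100}$ is a fixed number. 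To reach exactly the stated constant $\theta_0^{2M}/(16qM)$, I will absorb it into $M$ or into the bound (mass $\theta_0^{2M}$ spread over the support). The point needing care is this bookkeeping. Only an explicit constant depending on $(\delta,\Dc,N,q,\theta_0)$ really matters.

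\emph{Step 4: apply Proposition \ref{prop.drift} and conclude.} In $X'$, Proposition \ref{prop.drift} gives, for $n\ge 4Q/\theta$,
\[
\Prob_\mu\Big(\tfrac{1}{n(L(U)+1)}d(o,w_no)\le \tfrac{\kappa\theta}{4Q}\Big)\le 2e^{-c_\theta n/(4Q)}.
\]
Since $L(U)+1\ge L(U)$ up to the factor $1+1/L(U)$, and $L(U)\ge L(U^N)/N\ge (K_0\delta+1)/N$ by Lemma \ref{lemma.disp}\eqref{item.Un}, I can replace $L(U)+1$ by $L(U)$ at the cost of a constant depending on $N,\delta$. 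For large $n$ the factor $2$ is absorbed by shrinking $c_\star$. This yields the exponential bound. The drift statement follows because the drift does not depend on the basepoint, and $\frac1n d(o,w_no)\to\ell(\mu)$ almost surely by Kingman; the event bound with Borel–Cantelli then gives $\ell(\mu)\ge\kappa_\star L(U)$.

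The main obstacle is the constant bookkeeping in Steps 2–3: making the rescaled Schottky inequality hold with the required $+1$, and matching the stated $\kappa_\star$ (the factor $\kappa$ from Proposition \ref{prop.pivot}, $\#S$, and $L(U)+1$ versus $L(U)$). I will handle these by enlarging $M$, which is allowed to depend on $\delta,\Dc,N$.
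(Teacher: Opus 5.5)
Your proposal is correct and follows the paper's proof. The chain is the same: Lemma \ref{lemma.independent.2} applied to $U^N$ plus non-elementarity feed Proposition \ref{prop.schottky} with $2N$, then $\mu^{\ast 2qM}\ge\theta_0^{2M}\mu_S^{\ast 2}$, rescaling, Proposition \ref{prop.drift}, and Kingman with Borel--Cantelli.

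The bookkeeping you flag disappears in the paper's version, for three reasons:
\begin{itemize}
\item It rescales by $L(U)$ rather than $L(U)+1$. Then the term $+L(U)$ in $D\ge 20C+100\delta+L(U)$ becomes exactly the required $+1$, so Proposition \ref{prop.schottky} needs no modification.
\item It takes $\theta=\theta_0^{2M}$ with no $(\#S)^{-2}$ loss, since $\mu_S^{\ast 2}\le \mb{1}_{S^2}$.
\item With $\kappa=1/2$ from Proposition \ref{prop.kappa}, one gets $\kappa_\star=\kappa\theta/(4Q)=\theta_0^{2M}/(16qM)$ directly, with no enlargement of $M$.
\end{itemize}
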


\proof
To apply Proposition \ref{prop.schottky}, we first check its assumptions. 
First, since $L(U^N) \ge K_0\delta+1$, Lemma \ref{lemma.independent.2} implies that
there exists a hyperbolic element $g \in U^{2N}$ such that
\[
\tr{g}\ge L(U)+4000\delta+1.
\]
Second, since the action of $\<U\>$ on $X$ is non-elementary,  $\<U\>$ does not preserve the set of fixed points of $g$ in $\partial X$.
Lastly, since the action of $\<U\>$ is $\Dc$-weakly acylindrical, 
we apply Proposition \ref{prop.schottky} and  obtain the following: 

There exists an integer $M=M(\delta, \Dc, 2N)$ and a point $o\in X$ such that $U^M$ contains a $(1/100, C, D)$-Schottky set $S$ at $o$
with
$D\ge 20C+100\delta+L(U)$, where
$C=C(\delta, \Dc, 2N)(L(U)+1)$ and $D=D(\delta, \Dc, 2N)(L(U)+1)$.

We will apply Proposition \ref{prop.drift}.
Since $U^{2M} \supset S^2$, and by assumption  $\min_{u \in U}\mu^{\ast q}(u) \ge \theta_0$,
we have
\[
\mu^{\ast (2Mq)} \ge \theta_0^{2M}\mb{1}_{S^2} \ge \theta_0^{2M}\mu_S^{\ast 2}.
\]
Here $\mb{1}_E$ denotes the indicator function on a set $E$ and $\mu_S$ denotes the uniform distribution on $S$.

Let $Q:=2Mq$ and $\theta:=\theta_0^{2M}$, so that $\mu^{\ast Q} \ge \theta \mu_S^{\ast 2}$.
Let $R:=L(U)>0$.
After rescaling the metric by $R^{-1}$,  the metric space $(X, R^{-1}d)$ is $(\delta/R)$-hyperbolic.
With respect to the rescaled metric, the set $S$   is 
a $(1/100, C/R, D/R)$-Schottky set  at $o$ on $(X, R^{-1}d)$, where 
\[
D/R\ge 20 (C/R)+100(\delta/R)+1.
\]
Applying Proposition \ref{prop.drift} to this setting, 
there exists a constant $c_\theta>0$ such that for all $n \ge 4Q/\theta$,
we have
\[
\Prob_\mu\left(\frac{1}{nL(U)}d(o, w_n o) \le \frac{\kappa\theta}{4Q} \right)\le 2 \exp\left(-\frac{c_\theta}{4Q} n\right),
\]
where $\kappa$ is the universal constant in Proposition \ref{prop.pivot}.
Letting 
\[
\kappa_\star(\delta, \Dc, N, q, \theta_0):=\frac{\kappa\theta}{4Q}=\frac{\kappa \theta_0^{2M}}{8Mq}
\quad \text{and} \quad
c_\star(\delta, \Dc, N, q, \theta_0):=\frac{c_\theta}{8Q}=\frac{c_\theta}{16Mq}
\]
we obtain the first claim.

Furthermore, 
for $\Prob_\mu$-almost every $(w_n)_{n \in \N_0}$,
\[
\liminf_{n\to +\infty}\frac{1}{Rn}d(o, w_n o) \ge \kappa_\star(\delta, \Dc, N, q, \theta_0).
\]

If $\mu$ has a finite first moment, the Kingman subadditive ergodic theorem 
implies that the above liminf is in fact a limit, and is equal to $\ell(\mu)/R$ for $\Prob_\mu$-almost every $(w_n)_{n \in \N_0}$.

Finally, we obtain the explicit value of the constant $\kappa_\star(\delta, \Dc, N, q, \theta_0)$ by  taking $\kappa=1/2$ as allowed by Proposition \ref{prop.kappa}.
\qed

\medskip

We now pass from drift to translation length. 
Theorem \ref{thm.meta} on the displacement $d(o, w_no)$ gives the corresponding result on the translation length $\tr{w_n}$.
We say a probability measure $\mu$ on $\Isom (X)$  is {\bf bounded} if for some (equivalently, for all) $x \in X$,
we have
$\sup_{g \in \supp \mu}d(x, g x)<+\infty$.

Let $X$ be a $\delta$-hyperbolic geodesic space.
Let $\mu$ be a countably supported, non-elementary, and bounded probability measure on $\Isom(X)$.
Then, we have
\[
\lim_{n \to +\infty}\frac{1}{n}\tr{w_n}=\ell(\mu) \quad \text{for $\Prob_\mu$-almost every $(w_n)_{n \in \N_0}$},
\]
\cite[Theorem 4.1]{MTwpd}.
For $\tr{w_n}$,
an exponential decay estimate was shown in \cite[Theorem 1.4]{MT},
and a more refined large deviation principle was established in \cite[Theorem 1.3]{BMSS}.

The following corollary is a uniform version of \cite[Theorem 1.4]{MT} under the additional assumption that a group action is weakly acylindrical.

\begin{cor}\label{cor.tr}
In addition to the assumptions of Theorem \ref{thm.meta},
we assume that $\mu$ is bounded.
Then, 
for sufficiently large  $n$, 
we have
\[
\Prob_\mu\left(\frac{1}{nL(U)}\tr{w_n} \le \frac{1}{2}\kappa_\star(\delta, \Dc, N, q, \theta_0)\right) \le  \exp\left(-\frac{1}{2}c_\star(\delta, \Dc, N, q, \theta_0)n\right).
\]
Here, the constants $\kappa_\star(\delta, \Dc, N, q, \theta_0)$ and $c_\star(\delta, \Dc, N, q, \theta_0)$ are the same as in Theorem \ref{thm.meta}.
\end{cor}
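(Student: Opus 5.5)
The plan is to compare $\tr{w_n}$ with displacement at the basepoint. For every isometry $w$ and every $x\in X$ there is a standard $\delta$-hyperbolic estimate
\[
\tr{w}\ge d(x, w^2x)-d(x,wx)-O(\delta),
\]
and a Gromov-product version of it: $\tr{w}\ge d(x,wx)-2(w^{-1}x, wx)_x-O(\delta)$. So it suffices to show two things with exponentially small failure probability. First, $d(o,w_n o)\ge \kappa_\star nL(U)$, which is Theorem \ref{thm.meta}. Second, the Gromov product $(w_n^{-1}o, w_n o)_o$ is at most a small multiple of $nL(U)$, say $\frac14\kappa_\star nL(U)$ minus $O(\delta)$.

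For the Gromov product I would use the time-reversal trick of \cite{MT}. Write $w_n=w_{\lfloor n/2\rfloor}\cdot w'$, where $w'=g_{\lfloor n/2\rfloor}\cdots g_{n-1}$ is independent of the first half. Boundedness gives $d(o,go)\le B$ for all $g\in\supp\mu$. The Gromov product $(w_n^{-1}o,w_no)_o$ is controlled, up to $O(\delta)$, by how far the forward path $o,w_1o,\dots,w_no$ and the reversed path fellow-travel. Boundedness lets us cut at $n/2$ and reduce to the following estimate: for every fixed $x\in X$,
\[
\Prob\bigl((x, w_m o)_o\ge \epsilon m L(U)\bigr)\le e^{-cm},
\]
applied with $w_n^{-1}$ distributed as a $\check\mu$-walk.

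To prove this estimate I would use the pivotal-time construction with the Schottky set $S$ from Proposition \ref{prop.schottky}. The Schottky property at $o$ holds for all pairs $x,y$, so it is symmetric under inversion, and the hypotheses also hold for $\check\mu$ since $U=U^{-1}$. The key observation concerns the first pivotal time $k$ in Gouëzel's construction. Its Schottky element is chosen uniformly, so with conditional probability at least $99/100$ the chain from $o$ starts in a direction aligned away from $x$. On that event, Lemma \ref{lemma.chain}(iii) bounds $(x,w_mo)_o$ by roughly $d(o,y_k)$ plus a constant times $C$. Moreover, $d(o,y_k)$ has exponential tails in units of $L(U)$: $\mu$ is bounded, each Schottky element lies in $U^M$, and the distance to the first pivotal time is geometric, by \cite[Lemma 4.9]{G} as used in Proposition \ref{prop.kappa}. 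The distance has to be measured in units of $L(U)$ with the rescaled metric as in the proof of Theorem \ref{thm.meta}. For the $g_i$ steps this needs $B$ to be comparable to $L(U)$; if it is not, I would absorb $B$ into the "sufficiently large $n$" threshold.

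Combining the two estimates gives
\[
\tr{w_n}\ge \kappa_\star nL(U)-\tfrac12\kappa_\star nL(U)
\]
outside an event of probability at most $e^{-c_\star n}+e^{-c'n}$. For large $n$ this is bounded by $\exp(-\frac12 c_\star n)$, provided $c'\ge c_\star$, which I would arrange by choosing $\epsilon$ and the constants appropriately.

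The main obstacle is the uniform exponential bound on the Gromov product $(w_n^{-1}o,w_no)_o$ with constants independent of the action. The issue is keeping the rate at least $c_\star$ so that the final exponent $\frac12 c_\star$ comes out. The first thing I would try is to take the loss $\frac12$ in both the threshold and the exponent as slack, and to accept that the dependence on $B$ enters only through how large $n$ must be.
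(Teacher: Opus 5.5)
There is a genuine gap, exactly at the step you flag as the main obstacle. The estimate you need is
$\Prob\bigl((w_n^{-1}o,w_no)_o\ge\frac14\kappa_\star nL(U)\bigr)\le e^{-c'n}$ with $c'>\frac12c_\star$. This is false in general, so no refinement of the pivotal-time argument can supply it. The bound $B=\sup_{g\in\supp\mu}d(o,go)$ enters the \emph{rate} of this event, not just the threshold on $n$. Your own sketch already shows this: converting the geometric tail of the number of steps before a good pivotal time into a tail for $d(o,y_k)/L(U)$ divides the rate by $B/L(U)$.

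For a concrete example, take $g\in\langle U\rangle$ hyperbolic, $h=g^K$, and $\mu=\frac12\mu_U+\frac14(\delta_h+\delta_{h^{-1}})$. The hypotheses hold with $q=1$ and $\theta_0=1/(2m)$, so $\kappa_\star$ and $c_\star$ do not depend on $K$. With probability at least $16^{-j}$ the increments start with $h^j$ and end with $h^{-j}$, so $w_n=h^jw'h^{-j}$ with $w'$ independent. Apply the Schottky property to the first and last blocks of $w'$. With conditional probability bounded below uniformly in $n$ and $j$, this gives $(w_n^{-1}o,w_no)_o\ge j\,\tr{h}-O(1)$, while $\tr{w_n}=\tr{w'}$ is unaffected. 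Take $j\approx\kappa_\star nL(U)/(4K\tr{g})$. Then your Gromov-product event has probability at least of order $e^{-rn}$ with $r=O(\kappa_\star L(U)/(K\tr{g}))$, and $r<\frac12c_\star$ once $K$ is large. So $e^{-c_\star n}+e^{-c'n}$ can never be brought below $\exp(-\frac12c_\star n)$. The Gromov product at a fixed basepoint is the wrong quantity: it is not conjugation invariant, whereas $\tr{\cdot}$ is.

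The missing idea is to use that invariance, which is what the paper does by quoting \cite[Lemma 10.4]{BMSS}. For bounded $\mu$ and $\zeta,\varepsilon>0$, for all large $n$, one has $\Prob_\mu(\tr{w_n}\le\zeta n)\le(n+1)\Prob_\mu(d(o,w_no)\le(\zeta+\varepsilon)n)$. The factor $n+1$ is a union bound over the cyclic rotations $w_k^{-1}w_nw_k=g_k\cdots g_{n-1}g_0\cdots g_{k-1}$. These have the law of $w_n$ and the same translation length. One moves the basepoint to a suitable point $w_ko$ of the path, using $d(w_ko,w_nw_ko)=d(o,w_k^{-1}w_nw_ko)$. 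Boundedness of $\mu$ then enters only through how large $n$ must be. Apply this in the metric rescaled by $1/L(U)$ with $\zeta=\varepsilon=\frac12\kappa_\star$, and then Theorem~\ref{thm.meta}. This gives $(n+1)e^{-c_\star n}\le e^{-\frac12c_\star n}$ for large $n$. The exponent is lost only through the polynomial factor, which is where the $\frac12$ comes from, and no Gromov-product estimate is needed.
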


We use the following lemma to pass from displacement estimates to translation-length estimates. 
Let $o \in X$, and 
assume that $\mu$ is bounded.

\begin{lemma}[Lemma 10.4, \cite{BMSS}]\label{lemma.BMSS}
For any $\zeta>0$ and any $\e>0$,
there is an integer $N \in \N_0$ such that for any $n \ge N$, 
we have
\[
\Prob_\mu\left(\tr{w_n} \le \zeta n\right) \le (n+1)\Prob_\mu\left(d(o, w_n o)\le (\zeta+\e)n\right).
\]
\end{lemma}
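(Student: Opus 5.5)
The plan is to exploit the invariance of the law of $w_n$ under cyclic rotation of the increments, together with a deterministic geometric statement about conjugates. Write $w_n=g_0g_1\cdots g_{n-1}$ and set $p_k:=g_0\cdots g_{k-1}$ for $k=0,\dots,n$. Consider the rotated products
\[
w_n^{(k)}:=g_k\cdots g_{n-1}g_0\cdots g_{k-1}=p_k^{-1}w_np_k .
\]
Since the increments are i.i.d., each $w_n^{(k)}$ has the same law as $w_n$. Moreover $\tr{w_n^{(k)}}=\tr{w_n}$, and $d(o,w_n^{(k)}o)=d(p_ko,w_np_ko)$.

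The argument therefore reduces to a deterministic claim. Let $B:=\sup_{g\in\supp\mu}d(o,go)<+\infty$. The claim is that for $n$ large (depending on $\zeta,\e,B,\delta$), whenever $\tr{w_n}\le\zeta n$ there exists $k\in\{0,\dots,n\}$ with
\[
d(p_ko,w_np_ko)\le(\zeta+\e)n .
\]
Given the claim, the event $\{\tr{w_n}\le\zeta n\}$ is contained in $\bigcup_{k=0}^n\{d(o,w_n^{(k)}o)\le(\zeta+\e)n\}$. A union bound, using that each $w_n^{(k)}$ has the law of $w_n$, then gives the factor $n+1$.

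To prove the claim, use the standard estimate $d(y,gy)\le\tr{g}+2d(y,\qa{g})+O(\delta)$ for hyperbolic $g$ (from Lemma \ref{lemma.tr} and the triangle inequality). It then suffices to find a point $p_ko$ within distance $\e n/3$ of $\qa{w_n}$, since $O(\delta)\le\e n/3$ for large $n$. The trajectory $p_0o=o,\,p_1o,\dots,p_no=w_no$ is a path with jumps of length at most $B$ and total length at most $Bn$. Its endpoints project to $\qa{w_n}$ at points $\pi$ and $w_n\pi$. When $\tr{w_n}$ exceeds a fixed multiple of $\delta$ (the relevant threshold being $10\delta+2\Delta$ from Lemma \ref{lemma.rectangle}), these projections are far apart. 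Then the exponential divergence of paths in a $\delta$-hyperbolic space applies: a path connecting points whose projections are separated, while staying outside the $R$-neighbourhood of the quasi-axis, must have length at least of order $2^{R/\delta}$. With length $\le Bn$, some $p_ko$ therefore lies within $R=O(\delta\log(Bn))\le\e n/3$ of the axis once $n$ is large.

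The main obstacle is the case where $\tr{w_n}$ is small (bounded by $O(\delta)$) or $w_n$ is not hyperbolic, so that there is no well-separated quasi-axis. Here I would replace the axis by the quasi-centre of $w_n$: the set where $d(y,w_ny)\le\inf_x d(x,w_nx)+O(\delta)$. I would use the fact that $\inf_x d(x,w_nx)\le\tr{w_n}+O(\delta)$ in a hyperbolic space (for instance, a consequence of Theorem \ref{thm.bochi} applied to $U=\{w_n\}$). Then I would show that any geodesic $[o,w_no]$ passes within $O(\delta)$ of a point whose displacement is at most $\tr{w_n}+O(\delta)$. A natural candidate is the point at distance $(o,w_n^2o)_{w_no}$-type Gromov-product depth along $[o,w_no]$, analysed as in the proof of Lemma \ref{lemma.rectangle}.

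Finally, the discrete trajectory must be linked to this geodesic. I would use the same divergence argument: the path $(p_ko)$ of length at most $Bn$ from $o$ to $w_no$ comes within $O(\delta\log(Bn))$ of every point of $[o,w_no]$. This yields some $p_ko$ with displacement at most $\tr{w_n}+O(\delta\log(Bn))\le(\zeta+\e)n$ for all $n\ge N(\zeta,\e,B,\delta)$, proving the claim and hence the lemma.
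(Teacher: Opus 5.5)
Your proof is correct. It takes a different route from the paper, which gives no argument of its own: the paper imports the statement from \cite{BMSS}, where it is formulated for $\inf_{x}d(x,w_nx)$, and transfers it to $\tr{w_n}$ by noting that the two quantities differ by $O(\delta)$. You instead prove the $\tr{w_n}$ version directly. You combine invariance of the law of $w_n$ under cyclic rotation of the increments, which gives the factor $n+1$ through a union bound, with a deterministic statement: some $k$ satisfies $d(p_ko,w_np_ko)\le\tr{w_n}+O(\delta\log(Bn))+O(B)$. This gives a self-contained argument with an explicit threshold $N$ depending only on $\e$, $B$ and $\delta$, at the cost of redoing what the paper outsources.

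The step you leave as a sketch (your ``main obstacle'') is true, but the candidate point you name does not work in general. For an involution $g$ one has $(o,g^2o)_{go}=d(o,go)$, so the corresponding point of $[o,go]$ is an endpoint, and its displacement is $d(o,go)$.

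Take instead the midpoint $m$ of $[o,go]$. This works for every isometry $g$, so you need no case split, no quasi-axis and no appeal to Theorem \ref{thm.bochi}. Set $L=d(o,go)$ and $t=(o,g^2o)_{go}$.
\begin{enumerate}[(i)]
\item The orbit $o,go,g^2o,\dots$ is a $(t,L)$-chain, so Lemma \ref{lemma.chain}(ii) gives $L-2t\le\tr{g}+2\delta$.
\item The point $gm$ lies on $g[o,go]$ at distance $L/2$ from $go$. The tripod lemma (Lemma \ref{lemma.tripod}) then gives $d(m,gm)\le\max\{4\delta,\,L-2t+4\delta\}\le\tr{g}+6\delta$.
\end{enumerate}
Now apply this with $g=w_n$ and combine it with your divergence estimate. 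The piecewise geodesic path through $p_0o,\dots,p_no$ has length at most $Bn$, so it passes within $4\delta\log_2(Bn)+1$ of $m$. Every point of this path is within $B/2$ of some $p_ko$. Hence some $k$ satisfies $d(p_ko,w_np_ko)\le\tr{w_n}+8\delta\log_2(Bn)+B+6\delta+2$, which is at most $(\zeta+\e)n$ once $n$ is large.

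Incidentally, the bound $\inf_xd(x,gx)\le\tr{g}+6\delta$ obtained this way is exactly what the paper's comparison step needs when $w_n$ is not hyperbolic. Lemma \ref{lemma.tr} does not cover that case.
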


In \cite{BMSS}, the lemma is stated for $\inf_{x \in X}d(x, w_n x)$ in place of $\tr{w_n}$.
It implies the present formulation since those two quantities differ by a uniformly bounded amount depending only on the hyperbolicity constant of $X$ (cf.\ Lemma \ref{lemma.tr}).

\proof[Proof of Corollary \ref{cor.tr}]
Letting $\zeta=\e=(1/2)\kappa_\star(\delta, \Dc, N, q, \theta_0)$,
we apply Lemma \ref{lemma.BMSS} to the present setting where the metric is rescaled by $1/L(U)$:
there exists an integer $N$ such that for all $n \ge N$, 
\begin{align*}
\Prob_\mu\left(\frac{1}{nL(U)}\tr{w_n} \le \zeta\right)
\le (n+1)\Prob_\mu\left(\frac{1}{nL(U)}d(o, w_n o)\le 2 \zeta\right).
\end{align*}
Furthermore, by Theorem \ref{thm.meta}, the right hand side is bounded from above by
\[
(n+1) \exp\left(-c n\right)=\exp\left(-cn+\log(n+1)\right)
\le 
\exp\left(-\frac{1}{2}c n\right)
\]
for sufficiently large $n$, where $c=c_\star(\delta, \Dc, N, q, \theta_0)>0$.
\qed

\subsection{Uniform bounds over actions}\label{sec.main.action}

We now state our main results.
They are obtained by applying Theorem \ref{thm.meta} in two different settings. One is treated in this section, and the other is in the next section.

Let $(X, d)$ be a $\delta$-hyperbolic geodesic space. 
Let $\Gamma$ be a finitely generated group.
We consider an isometric action $\rho$ of $\Gamma$ on $X$, equivalently, a group-homomorphism 
\[
\rho:\Gamma \to \Isom (X).
\]
Suppose that  $\rho:\Gamma \to \Isom (X)$ gives a faithful (i.e., $\rho$ is injective), non-elementary, isometric action.

Recall that a measure $\nu$ on $\Gamma$ is called admissible if the support of $\nu$ generates $\Gamma$ as a semigroup.
Let $\nu$ be an admissible probability measure on $\Gamma$ with a finite first moment relative to some (equivalently, all) word metric.

We consider a $\mu$-random walk $(w_n)_{n \in \N_0}$ on $\Isom (X)$
where $\mu=\rho_\ast \nu$.
Note that $\mu$ has a finite first moment on $(X, d)$.
Indeed, this follows from
\[
d(o, \rho(\gamma)o)
\le |\gamma|_\Sigma \max_{s \in \Sigma}d(o, \rho(s)o),
\]
for every $\gamma \in \Gamma$, where $|\cdot|_\Sigma$ denotes the word norm relative to $\Sigma$.
We define the associated drift by
\[
\ell(\nu, \rho):=\ell(\rho_\ast \nu).
\]

It is known that $\ell(\nu,\rho) >0$ as a special case of \cite[Theorem 1.2]{MT} (see also \cite[Theorem 1.1]{G} for an alternative proof).
We will give a lower bound  on $\ell(\nu,\rho)$. 
For this purpose, we fix a finite generating set $\Sigma$ with $1 \in \Sigma$.
For each action $\rho$, the quantity  $L(\rho(\Sigma))$ gives 
a natural scale. Our first theorem gives a uniform linear lower 
bound on the drift $\ell(\nu,\rho) >0$ in terms of $L(\rho(\Sigma))$.
One key point is the multiplicative 
constant in the lower bound
below is independent of the action $\rho$.

\begin{thm}[Main theorem I]\label{thm.drift_general}
Let $\delta, \Dc \ge 0$, and $N \in \N$.
Let $(X, d)$ be a $\delta$-hyperbolic geodesic space. 
Let $\Gamma$ be a finitely generated group, and let $\nu$ be an admissible probability measure on $\Gamma$ with finite first moment.
Let $\Sigma$ be a finite symmetric generating set of $\Gamma$ with $1 \in \Sigma$.

Then, there exists a constant  $\kappa(\delta, \Dc, N, \Gamma, \Sigma, \nu)>0$ such that for
every faithful, non-elementary and $\Dc$-weakly acylindrical isometric action $\rho:\Gamma \to \Isom (X)$  satisfying $L(\rho(\Sigma)^N)\ge K_0\delta+1$,
we have
\[
\ell(\nu, \rho)\ge \kappa(\delta, \Dc, N, \Gamma, \Sigma, \nu)L(\rho(\Sigma)).
\]
Here $K_0$ is the absolute constant in Theorem \ref{thm.meta}.
\end{thm}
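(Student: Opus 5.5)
The plan is to apply Theorem \ref{thm.meta} with $U:=\rho(\Sigma)$ and $\mu:=\rho_\ast\nu$. The only work is to produce an integer $q$ and a threshold $\theta_0>0$ that depend on $\Gamma,\Sigma,\nu$ alone and not on $\rho$.

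First I check the geometric hypotheses on $U$. Since $\Sigma$ is finite, symmetric and contains $1$, the set $U=\rho(\Sigma)$ is a finite symmetric subset of $\Isom(X)$ with $1\in U$. Also $\<U\>=\rho(\Gamma)$, because $\Sigma$ generates $\Gamma$. The action of $\rho(\Gamma)$ on $X$ is non-elementary and $\Dc$-weakly acylindrical by assumption, and $L(U^N)\ge K_0\delta+1$ is assumed directly. Finally, $\mu$ has a finite first moment by the estimate $d(o,\rho(\gamma)o)\le|\gamma|_\Sigma\max_{s\in\Sigma}d(o,\rho(s)o)$ recorded before the theorem.

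The main obstacle is the convolution lower bound $\min_{u\in U}\mu^{\ast q}(u)\ge\theta_0$. Two issues arise.
\begin{enumerate}[(1)]
\item The support of $\nu$ generates $\Gamma$ only as a semigroup, so an element $s\in\Sigma$ is a product of elements of $\supp\nu$, but the lengths of these products can differ from one $s$ to another.
\item The identity $1\in\Sigma$ must also be reached at time $q$.
\end{enumerate}
I handle both by passing to the lazy measure $\widetilde\nu=(\nu+\delta_1)/2$. By Lemma \ref{lemma.lazy}, $\ell(\rho_\ast\widetilde\nu)=\ell(\nu,\rho)/2$, so it suffices to bound the drift of $\widetilde\mu=\rho_\ast\widetilde\nu$. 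Admissibility gives, for each $s\in\Sigma\setminus\{1\}$, a word $s=\gamma_1\cdots\gamma_{k_s}$ with $\gamma_i\in\supp\nu$. Let $q:=\max_s k_s$, which is at least $1$. Padding each word with identity letters, every $s\in\Sigma$, including $s=1$, becomes a product of exactly $q$ elements of $\supp\widetilde\nu$. Then
\[
\widetilde\nu^{\ast q}(s)\ge\prod_{i}\widetilde\nu(\gamma_i)\ge 2^{-q}\min\Big\{1,\ \min_{s}\prod_i\nu(\gamma_i)\Big\}=:\theta_0>0.
\]
Both $q$ and $\theta_0$ depend only on $(\Gamma,\Sigma,\nu)$, since the words are chosen once and for all in $\Gamma$.

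For the pushforward I use $\widetilde\mu^{\ast q}(\rho(s))\ge\widetilde\nu^{\ast q}(s)$, so $\min_{u\in U}\widetilde\mu^{\ast q}(u)\ge\theta_0$. This step uses only that pushforward commutes with convolution; faithfulness is not even needed here.

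Theorem \ref{thm.meta} now gives $\ell(\widetilde\mu)\ge\kappa_\star(\delta,\Dc,N,q,\theta_0)L(U)$. Combining with Lemma \ref{lemma.lazy}, I take
\[
\kappa(\delta,\Dc,N,\Gamma,\Sigma,\nu):=\tfrac12\kappa_\star(\delta,\Dc,N,q,\theta_0),
\]
which is independent of $\rho$ and yields $\ell(\nu,\rho)\ge\kappa L(\rho(\Sigma))$.
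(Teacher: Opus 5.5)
Your proposal is correct and follows essentially the same route as the paper. Both pass to the lazy measure via Lemma \ref{lemma.lazy}, obtain $q$ and $\theta_0$ depending only on $(\Gamma,\Sigma,\nu)$, push forward under $\rho$, and apply Theorem \ref{thm.meta}; you find $q,\theta_0$ via explicit words padded with the identity, while the paper uses the monotonicity of $\supp\nu^{\ast n}$ once $1\in\supp\nu$. One harmless slip: since $\ell(\nu,\rho)=2\ell(\rho_\ast\widetilde\nu)$, you could take $\kappa=2\kappa_\star$ rather than $\tfrac12\kappa_\star$, though your smaller constant is still valid, and you are right that faithfulness is not needed for the pushforward inequality.
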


\proof
Let $U:=\rho(\Sigma)$, and $\mu:=\rho_\ast \nu$.
We may assume that  $1 \in \supp \nu$ since otherwise Lemma \ref{lemma.lazy}
reduces the general case to this one at the cost of a factor of $2$.  
To apply Theorem \ref{thm.meta}, we first verify its assumptions and identify the relevant constants. 
We claim that there exist $q=q(\Gamma, \Sigma, \nu) \in \N$ and $\theta_0=\theta_0(\Gamma, \Sigma, \nu) \in (0, 1]$ such that for every faithful isometric action $\rho$ of $\Gamma$ on $X$,
we have
$\min_{u \in \rho(\Sigma)}(\rho_\ast \nu)^{\ast q} \ge \theta_0$.

Indeed, since $1 \in \supp \nu$ and $\nu$ is admissible, the support of $\nu^{\ast n}$ is weakly increasing in $n$, and the union of these supports coincides with $\Gamma$.
Moreover, since $\Sigma$ is a finite set,
there exists $q \in \N$ such that $\Sigma \subset \supp \nu^{\ast q}$.
Letting $\theta_0:=\min\{\nu^{\ast q}(s) \ : \ s \in \Sigma\}>0$,
we have $\min_{s\in \Sigma}\nu^{\ast q}(s)\ge \theta_0$.
Furthermore, by the injectivity of $\rho$,
we have $\min_{u\in \rho(\Sigma)}(\rho_\ast \nu)^{\ast q}(u)\ge \theta_0$, as required.
By definition, $q$ and $\theta_0$ depend only on $\Gamma$, $\Sigma$, and $\nu$, but not on $\rho$.
Moreover, we have $\rho(\Sigma)^{-1}=\rho(\Sigma)$ since $\Sigma$ is symmetric.

Note that $\mu=\rho_\ast \nu$ has a finite first moment on $(X, d)$ for every $\rho$.
Therefore, by Theorem \ref{thm.meta},
letting $\kappa(\delta, \Dc, N, \Gamma, \Sigma, \nu):=\kappa_\star(\delta, \Dc, N, q, \theta_0)$,
we obtain
\[
\ell(\nu, \rho)=\ell(\rho_\ast \nu)\ge \kappa(\delta, \Dc, N, \Gamma, \Sigma, \nu) L(\rho(\Sigma)).
\]
This shows the theorem.
\qed

\medskip

Hadamard manifolds provide an important class of examples
to which Theorem  \ref{thm.drift_general} applies. 
In this setting, some of the assumptions are automatically satisfied.

\begin{cor}\label{cor.drift_Hn}
Fix an integer $n \ge 2$ and a real $a \in [1, +\infty)$.
Let $\Gamma$ be a finitely generated group, $\Sigma$ a finite symmetric generating set of $\Gamma$ with $1 \in \Sigma$, 
and 
$\nu$ an admissible, probability measure on $\Gamma$ with a finite first moment.

Then, there exists a constant $\kappa(n, a, \Gamma, \Sigma, \nu)>0$ such that the following holds.

Let $X$ be an $n$-dimensional Hadamard manifold with sectional curvature $K_X \in [-a^2, -1]$. 
For every faithful, isometric action $\rho:\Gamma \to \Isom (X)$ such that $\rho(\Gamma)$ is discrete and non-virtually-nilpotent,
we have
\[
\ell(\nu, \rho)\ge \kappa(n, a, \Gamma, \Sigma, \nu)L(\rho(\Sigma)).
\]
\end{cor}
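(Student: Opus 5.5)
The plan is to deduce the corollary directly from Theorem \ref{thm.drift_general}, by checking its hypotheses with constants that depend only on $n$ and $a$.

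First, fix the geometric constants. Let $\delta$ be the hyperbolicity constant of $\H^2$; by the remark in Section \ref{sec.hadamard}, every $n$-dimensional Hadamard manifold $X$ with $K_X\in[-a^2,-1]$ is $\delta$-hyperbolic and geodesic. Let $\Dc=\Dc(n,a)$ be the constant from Corollary \ref{cor.Hn}, and let $N=N(n,a)$ be the integer from Lemma \ref{lemma.independent}. These three quantities depend only on $n$ and $a$. We then set
\[
\kappa(n,a,\Gamma,\Sigma,\nu):=\kappa(\delta,\Dc,N,\Gamma,\Sigma,\nu)
\]
as given by Theorem \ref{thm.drift_general}.

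Next, fix a faithful action $\rho$ with $\rho(\Gamma)$ discrete and not virtually nilpotent, and verify the hypotheses of Theorem \ref{thm.drift_general} one by one.

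\emph{Weak acylindricity.} Since $\rho(\Gamma)$ is a discrete subgroup of $\Isom(X)$, Corollary \ref{cor.Hn} shows that its action is $\Dc$-weakly acylindrical.

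\emph{Displacement bound.} Put $U:=\rho(\Sigma)$. Then $U$ is finite, symmetric, contains $1$, and $\<U\>=\rho(\Gamma)$ because $\Sigma$ generates $\Gamma$. Hence $\<U\>$ is discrete and non-virtually-nilpotent, and Lemma \ref{lemma.independent} gives $L(U^N)\ge K_0\delta+1$.

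\emph{Non-elementarity.} Lemma \ref{lemma.nonelem} applies to the same $U$ and shows that the action of $\<U\>$ is non-elementary.

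\emph{Faithfulness.} This is part of the hypothesis.

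With all hypotheses in place, Theorem \ref{thm.drift_general} yields $\ell(\nu,\rho)\ge \kappa\, L(\rho(\Sigma))$, and the constant is independent of $\rho$ and of the particular manifold $X$.

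The one point needing care is this uniformity. The constant in Theorem \ref{thm.drift_general} depends on $\delta,\Dc,N$, and the uniformity in $X$ holds only because the Margulis constant, Proposition \ref{prop.overlap}, and the comparison with $\H^2$ all give bounds depending only on $(n,a)$. The quantities $q$ and $\theta_0$ in the proof of Theorem \ref{thm.drift_general} depend only on $(\Gamma,\Sigma,\nu)$. No further obstacle is expected; the substance lies in the earlier results.
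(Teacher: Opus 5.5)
Your proposal is correct and matches the paper's proof. The paper likewise takes $N=N(n,a)$ from Lemma~\ref{lemma.independent}, $\Dc=\Dc(n,a)$ from Corollary~\ref{cor.Hn}, non-elementarity from Lemma~\ref{lemma.nonelem}, and $\delta$ as the hyperbolicity constant of $\H^2$, and then sets $\kappa(n,a,\Gamma,\Sigma,\nu):=\kappa(\delta,\Dc,N,\Gamma,\Sigma,\nu)$ from Theorem~\ref{thm.drift_general}. Your remark on uniformity in $X$ and $\rho$ is accurate and only makes explicit what the paper leaves implicit.
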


The above result may be viewed as a substantial extension of \cite[Theorem 3.3]{AGG}, 
which concerns the special case where
$X=\H^2$ has  constant curvature $-1$, $\Gamma$ is the fundamental group of a closed orientable surface of genus at
least two, and $\rho$ is cocompact.

\proof[Proof of Corollary \ref{cor.drift_Hn}]
By Lemma \ref{lemma.independent},
there exists $N=N(n, a)$ such that $L(\rho(\Sigma)^N)\ge K_0\delta +1$,
where $K_0$ is the absolute constant in Theorem \ref{thm.meta}.
Also, the faithful action $\rho$ of $\Gamma$ on $X$ is $\Dc$-weakly acylindrical with $\Dc=\Dc(n, a)$ by Corollary \ref{cor.Hn}, and   non-elementary by Lemma \ref{lemma.nonelem}.
Note that the hyperbolicity constant $\delta$ is at most that of $\H^2$ (see Section \ref{sec.hadamard}).
Thus, Theorem \ref{thm.drift_general} yields the claim with $\kappa(n, a, \Gamma, \Sigma, \nu):=\kappa(\delta, \Dc, N, \Gamma, \Sigma, \nu)$.
\qed

\subsection{Uniform bounds over finitely generated subgroups}\label{sec.main.subgr}

We now treat the second of the two settings mentioned above. 
In this setting, 
instead of 
fixing  a group $\Gamma$ and a generating set $\Sigma$, 
we consider simple random walks 
associated to finite sets $U$ in $\Isom(X)$ of fixed cardinality. 
This formulation will be used in Sections \ref{sec.hyp.gp} and \ref{sec.MCG}.

For a finite set $U$, we denote by $\mu_U$ the uniform distribution on $U$.

\begin{thm}[Main theorem II]\label{thm.drift2}
For every $\delta, \Dc \ge 0$, $N, m \in \N$,
there exists a constant $\kappa(\delta, \Dc, N, m)>0$ such that the following holds:

Let $(X, d)$ be a $\delta$-hyperbolic geodesic space.
For every finite symmetric set $U$ in $\Isom (X)$ with $\#U=m$ such that $\<U\>$ is non-elementary, $\Dc$-weakly acylindrical, 
and $L(U^N)\ge  K_0\delta+1$,
we have
\[
\ell(\mu_U)\ge \kappa(\delta, \Dc, N, m)L(U).
\]
Here $K_0$ is the absolute constant in Theorem \ref{thm.meta}.

In fact, we have
\[
\kappa(\delta, \Dc, N, m)=\frac{1}{16M (2m)^{2M}},
\]
where $M$ is a positive integer depending only on $\delta$, $\Dc$, and $N$.
\end{thm}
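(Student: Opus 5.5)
The plan is to deduce this directly from Theorem \ref{thm.meta} with $q=1$, after making the walk lazy so that the identity is in the support. The only mismatch with Theorem \ref{thm.meta} is that $U$ need not contain $1$. I would set $U':=U\cup\{1\}$. This set is finite and symmetric, contains $1$, and generates the same group, $\<U'\>=\<U\>$. So the action of $\<U'\>$ is still non-elementary and $\Dc$-weakly acylindrical.

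Next I would check that the quantitative hypotheses survive. Since $U^N$ denotes products of \emph{at most} $N$ elements, $U'^N$ and $U^N$ differ at most by the identity. By the remark at the start of Section \ref{sec.displacement}, $L(V\cup\{1\})=L(V)$. Hence $L(U'^N)=L(U^N)\ge K_0\delta+1$ and $L(U')=L(U)$.

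For the measure, take $\widetilde\mu:=(\mu_U+\delta_1)/2$. For every $u\in U$ we have $\widetilde\mu(u)\ge 1/(2m)$, and $\widetilde\mu(1)\ge 1/2\ge 1/(2m)$. So $\min_{u\in U'}\widetilde\mu(u)\ge\theta_0:=1/(2m)$ with $q=1$. Since $\widetilde\mu$ is finitely supported, it has finite first moment. Theorem \ref{thm.meta} then gives an integer $M=M(\delta,\Dc,N)$ with
\[
\ell(\widetilde\mu)\ge \kappa_\star(\delta,\Dc,N,1,\tfrac{1}{2m})\,L(U')=\frac{(2m)^{-2M}}{16M}\,L(U).
\]

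Finally, Lemma \ref{lemma.lazy} gives $\ell(\mu_U)=2\ell(\widetilde\mu)$. This yields $\ell(\mu_U)\ge \frac{1}{16M(2m)^{2M}}L(U)$, even with an extra factor $2$ to spare, and $M$ depends only on $\delta,\Dc,N$. This is exactly the constant $\kappa(\delta,\Dc,N,m)$ claimed.

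I do not expect a genuine obstacle. The only points needing care are bookkeeping: that adjoining $1$ changes neither $L$ nor the group-theoretic hypotheses, and that the lazy walk's weight on the identity does not spoil the uniform lower bound $\theta_0$.
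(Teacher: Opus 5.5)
Your proposal is correct and follows the paper's proof. The paper likewise reduces to the case $1\in\operatorname{supp}\mu_U$ via Lemma~\ref{lemma.lazy} (``at the cost of replacing $m$ by $2m$''), then applies Theorem~\ref{thm.meta} with $q=1$ to get $\kappa_\star=\theta_0^{2M}/(16M)$; your explicit check that passing to $U\cup\{1\}$ leaves $\langle U\rangle$, $L(U)$ and $L(U^N)$ unchanged fills in bookkeeping the paper leaves implicit.
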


\proof
To apply Theorem \ref{thm.meta} we verify its assumption and identify the relevant constants. 
We may assume that  $1 \in \supp \mu_U$ since otherwise we use Lemma \ref{lemma.lazy}
at the cost of replacing $m$ by $2m$ in below. 
By definition, we have $\min_{u \in U}\mu_U(u)=1/m$ and thus $q=1$ and $\theta_0=1/m$.
Applying Theorem \ref{thm.meta},
we obtain the claim with the constant $\kappa(\delta, \Dc, N, m):=\kappa_\star(\delta, \Dc, N, q, \theta_0)$.
Furthermore, we have
\[
\kappa(\delta, \Dc, N, m)=\frac{\theta_0^{2M}}{16qM}=\frac{1}{16M m^{2M}},
\]
where $M=M(\delta, \Dc, N)$.
This shows the claim.
\qed

\medskip

We now give an important application of Theorem \ref{thm.drift2} to pinched Hadamard manifolds.
In this setting, some of the assumptions are automatically satisfied.

\begin{cor}\label{cor.srwHn}
Let $n \in \N$, $a\in [1, +\infty)$, and $m \in \N$.
Then, there exists a constant $\kappa(n, a, m)>0$ such that the following holds:

Let $X$ be an $n$-dimensional Hadamard manifold with sectional curvature $K_X \in [-a^2, -1]$.
For every finite symmetric set $U$ in $\Isom(X)$ with $\# U=m$, such that $\<U\>$ is discrete and non-virtually-nilpotent, 
we have
\[
\ell(\mu_U) \ge \kappa(n, a, m)L(U).
\]

Moreover, we have
\[
\kappa(n, a, m) = \frac{1}{16M (2m)^{2M}},
\]
where $M$ is a positive integer depending only on $n$ and $a$.
\end{cor}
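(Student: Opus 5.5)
The plan is to deduce Corollary \ref{cor.srwHn} directly from Theorem \ref{thm.drift2}, in the same way that Corollary \ref{cor.drift_Hn} was deduced from Theorem \ref{thm.drift_general}. Let $\delta$ be the hyperbolicity constant of $\H^2$. By Section \ref{sec.hadamard}, every $X$ in the statement is $\delta$-hyperbolic, and it is geodesic. So it suffices to produce constants $\Dc=\Dc(n,a)$ and $N=N(n,a)$ for which all hypotheses of Theorem \ref{thm.drift2} hold, uniformly over $X$ and $U$.

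First, for the displacement hypothesis, we need $1 \in U$ to apply Lemma \ref{lemma.independent}. Since $L(U)=L(U\cup\{1\})$ and $U^N \subset (U\cup\{1\})^N$, it is cleanest to pass to $U' := U\cup\{1\}$. Lemma \ref{lemma.independent} applied to $U'$ gives $L(U'^N)\ge K_0\delta+1$ with $N=N(n,a)$. I would arrange the reduction so that Theorem \ref{thm.drift2} is applied to the lazy measure. In its proof, the laziness step via Lemma \ref{lemma.lazy} already replaces $\mu_U$ by $(\mu_U+\delta_1)/2$. Every element of $U'$ has mass at least $1/(2m)$ under this measure, so Theorem \ref{thm.meta} applies to $U'$ with $q=1$ and $\theta_0=1/(2m)$. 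The factor $1/2$ from Lemma \ref{lemma.lazy} is absorbed into the constant. This accounts for the $(2m)^{2M}$ appearing in the stated formula.

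Second, weak acylindricity: $\<U'\>=\<U\>$ is discrete, so Corollary \ref{cor.Hn} gives that its action is $\Dc$-weakly acylindrical with $\Dc=\Dc(n,a)$.

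Third, non-elementarity follows from Lemma \ref{lemma.nonelem}.

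Symmetry is inherited from $U$. Theorem \ref{thm.meta} then yields
\[
\ell(\mu_U)\ge 2\,\ell\bigl((\mu_U+\delta_1)/2\bigr)\ge \kappa\, L(U'),
\]
and $L(U')=L(U)$. The constant comes out as
\[
\frac{(1/(2m))^{2M}}{16M} = \frac{1}{16M(2m)^{2M}},
\]
possibly after absorbing factors of $2$ into the choice of $M$. Here $M=M(\delta,\Dc,2N)$ depends only on $n$ and $a$, because $\delta$, $\Dc$ and $N$ do.

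The only real obstacle is bookkeeping. One must check that replacing $U$ by $U\cup\{1\}$ is compatible with the cardinality and uniform-measure hypotheses of Theorem \ref{thm.drift2}, and that the resulting constant matches $1/(16M(2m)^{2M})$ exactly rather than up to a harmless factor. I would handle this by going through Theorem \ref{thm.meta} directly with $\theta_0=1/(2m)$, $q=1$, rather than citing Theorem \ref{thm.drift2} verbatim. I would then adjust $M$ (which is only claimed to depend on $n$ and $a$) to absorb the factor $2$ lost in Lemma \ref{lemma.lazy}.
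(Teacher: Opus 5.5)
Your proposal is correct and matches the paper's proof: you check the hypotheses of Theorem \ref{thm.drift2} (equivalently Theorem \ref{thm.meta} for $U\cup\{1\}$ and the lazy measure, with $q=1$, $\theta_0=1/(2m)$) using Lemma \ref{lemma.independent}, Corollary \ref{cor.Hn} and Lemma \ref{lemma.nonelem}, and $M=M(\delta,\Dc,2N)$ depends only on $n$ and $a$. You do not need to adjust $M$, because $\ell(\mu_U)=2\,\ell((\mu_U+\delta_1)/2)\ge 2\kappa_\star L(U)$, so the laziness step gains a factor of $2$ rather than losing one.
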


\proof
By Lemma \ref{lemma.independent}, there exists $N=N(n, a)\in \N$ such that $L(U^N)\ge K_0\delta+1$ where $K_0$ is the absolute constant in Theorem \ref{thm.meta} and $\delta$ denotes the hyperbolicity constant of $\H^2$.
Furthermore, 
the action of $\<U\>$ on $X$ is $\Dc$-weakly acylindrical where $\Dc=\Dc(n, a)$ by Corollary \ref{cor.Hn}, and
non-elementary by Lemma \ref{lemma.nonelem}.
Therefore, Theorem \ref{thm.drift2} implies the claim.
\qed

\begin{question}\label{question.constant}
In Corollary \ref{cor.srwHn},
can one take the constants $\kappa(n, a, m)$ so that $\liminf_{m \to +\infty}\kappa(n, a, m)>0$ with both $n$ and $a$ fixed? 
\end{question}

As another consequence, we obtain the following drift dichotomy for 
 finitely generated discrete subgroup $\<U\>$ of isometries of pinched Hadamard manifolds with $U=U^{-1}$.
\begin{cor}\label{cor.dichotomy}
Let $n \in \N$, $a\in [1, +\infty)$, and $m \in \N$.
Then, there exists a constant $c(n, a, m)>0$ such that the following holds:

Let $X$ be an $n$-dimensional Hadamard manifold with sectional curvature $K_X \in [-a^2, -1]$.
For every finite set $U$ in $\Isom(X)$ with $\# U=m$ and $U=U^{-1}$ such that $\<U\>$ is discrete,
exactly one of the following holds: 
\begin{enumerate}[(1)]
\item $\ell(\mu_U)=0$ if $\<U\>$ is virtually nilpotent. 
\item $\ell(\mu_U) \ge c(n, a, m)$ if $\<U\>$ is non-virtually-nilpotent.
\end{enumerate}
\end{cor}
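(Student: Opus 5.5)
The proof splits according to whether $\<U\>$ is virtually nilpotent, and the two cases are mutually exclusive, so it suffices to prove (1) and (2) separately. For (2), we combine Corollary \ref{cor.srwHn} with the Margulis lemma. If $\<U\>$ is discrete and non-virtually-nilpotent, the Margulis lemma gives $L(U)\ge \e(n,a)$, as quoted in the proof of Lemma \ref{lemma.independent}. Corollary \ref{cor.srwHn} then yields
\[
\ell(\mu_U)\ge \kappa(n,a,m)L(U)\ge \kappa(n,a,m)\e(n,a)=:c(n,a,m),
\]
which depends only on $n$, $a$ and $m$. The finite first moment needed to define $\ell(\mu_U)$ is automatic, since $U$ is finite. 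One small point is that Corollary \ref{cor.srwHn} is stated for symmetric $U$, which is exactly our hypothesis $U=U^{-1}$.

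For (1), suppose $\<U\>$ is discrete and virtually nilpotent; we show $\ell(\mu_U)=0$. Note that $\ell(\mu_U)$ is the drift of the induced random walk on the group $\Gamma=\<U\>$, measured through the orbit map $\gamma\mapsto \gamma o$. This map is $\max_{u\in U}d(o,uo)$-Lipschitz with respect to the word metric $|\cdot|_U$. Hence
\[
\frac1n\E d(o,w_no)\le \max_{u\in U}d(o,uo)\cdot \frac1n\E|w_n|_U,
\]
so it suffices to show that the word-metric drift of the simple random walk $\mu_U$ on $\Gamma$ vanishes. A finitely generated virtually nilpotent group has polynomial growth, so its asymptotic entropy $h(\mu_U)$ vanishes: we have $H(\mu_U^{\ast n})\le \log\#B_U(n)=O(\log n)$. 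For a finitely supported symmetric measure, vanishing entropy forces vanishing word drift. One way to see this is Varopoulos' inequality $\ell^2\le 2h$ for symmetric measures (or Guivarc'h's fundamental inequality $\ell\le h\, v$). An alternative that avoids the symmetric case is to use that virtually nilpotent groups are Liouville and amenable, hence have no nonconstant bounded harmonic functions, and then apply Karlsson–Ledrappier, which gives a horofunction realizing the drift. The simplest route is still entropy plus Varopoulos, and I would present that.

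I expect the main obstacle to be justifying $\ell=0$ in case (1) cleanly within the paper's framework, since none of the earlier results address it. I would cite the standard facts (polynomial growth implies zero entropy, and symmetric measures with zero entropy have zero drift) rather than reprove them. A purely geometric alternative is available: a discrete virtually nilpotent subgroup of isometries of a pinched Hadamard manifold is elementary, so it fixes a point in $X$, fixes a point at infinity, or preserves a geodesic. In the first two cases, a Busemann-function argument would give sublinear displacement, but the parabolic case needs care, so I would not take this route.
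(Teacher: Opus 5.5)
Your case (2) is the paper's argument: the Margulis lemma gives $L(U)\ge \e(n,a)$, and Corollary \ref{cor.srwHn} then gives $c(n,a,m)=\e(n,a)\kappa(n,a,m)$. Case (1) also starts as the paper does, since the orbit map is Lipschitz for $|\cdot|_U$ and it suffices to show the word drift vanishes. Where you differ is in how you show it vanishes. The paper argues that virtual nilpotence gives a trivial Poisson boundary (Kaimanovich--Vershik), and then applies Karlsson--Ledrappier: a symmetric Liouville walk with finite first moment has zero drift. Your route is also correct. Polynomial growth gives $H(\mu_U^{\ast n})=O(\log n)$, hence $h(\mu_U)=0$. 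The Carne--Varopoulos bound $\mu_U^{\ast n}(g)\le 2e^{-|g|_U^2/(2n)}$, valid because $\mu_U$ is symmetric and supported on $U$, then gives $\ell_U(\mu_U)^2\le 2h(\mu_U)$. Your route is more elementary. It skips the Poisson boundary: the Liouville step is itself usually derived from $h=0$, and you replace ``$h=0\Rightarrow$ Liouville $\Rightarrow$ zero drift'' by one heat-kernel inequality. It also gives the quantitative bound $\E|w_n|_U=O(\sqrt{n\log n})$. The paper's route needs only a finite first moment, whereas Varopoulos-type bounds need bounded (or at least square-integrable) steps. That extra generality is irrelevant here because $\mu_U$ is finitely supported. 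In both routes symmetry enters exactly once: through reversibility in Carne--Varopoulos for you, and through the symmetric hypothesis of Karlsson--Ledrappier for the paper.

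Two side remarks in your proposal are wrong and should be deleted. First, Guivarc'h's fundamental inequality is $h\le \ell v$, not $\ell\le h v$. It bounds entropy by drift and gives no information on $\ell$ when $v=0$. Second, the Liouville/Karlsson--Ledrappier route does not ``avoid the symmetric case''. That route is exactly the paper's, and its zero-drift criterion requires symmetry: a biased walk on $\Z$ is Liouville but has positive drift. In fact no argument can avoid symmetry in this statement. For $U=\{g\}$ with $g$ hyperbolic, $\<U\>$ is discrete and cyclic, yet $\ell(\mu_U)=\tr{g}>0$ (Remark \ref{remark.dichotomy}).
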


\proof
Set 
$$c(n, a, m):=\e(n, a) \kappa(n, a, m),$$ where $\e(n, a)$ is a Margulis constant of $X$ and $\kappa(n, a, m)$ is the constant in Corollary \ref{cor.srwHn}. 

If $\<U\>$ is discrete and non-virtually-nilpotent,
then by the Margulis lemma, we have $L(U) \ge \e(n, a)$ \cite[Section 8.3]{BGS}.
Thus, Corollary \ref{cor.srwHn} implies that 
\[
\ell(\mu_U) \ge \e(n, a) \kappa(n, a, m)=c(n, a, m).
\]

On the other hand, if $\<U\>$ is virtually nilpotent, 
then the Poisson boundary of $(\<U\>, \mu_U)$ is trivial (cf.\ \cite[Proposition 1.4]{KV}).
Note that $\mu_U$ is symmetric since $U=U^{-1}$.
Considering the word norm $|\cdot\,|_U$ on $\<U\>$ with respect to $U$,
the corresponding drift $\ell_U(\mu_U)$ is equal to $0$ by \cite[Corollary 2]{KL}.
Since $d(o, g o) \le (\max_{u \in U}d(o, uo))|g|_U$ for all $g \in \<U\>$,
we have $\ell(\mu_U)=0$.
\qed

\begin{remark}\label{remark.dichotomy}
The conclusion  of Corollary \ref{cor.dichotomy} may fail if we do not assume $U= U^{-1}$.
Indeed, take $U$ to consist of  a single element generating an infinite cyclic group.

Also, we can not remove the assumption that $\<U\>$ is discrete.
Indeed, in  $\H^n$, there exists a finitely generated non-virtually-nilpotent solvable group $\<U\>$ in a parabolic subgroup in $\Isom(\H^n)$ such that
$\< U\>$ is not discrete and  $\ell(\mu_U)=0$.
\end{remark}

\begin{question}\label{question.gap}
In Corollary \ref{cor.dichotomy},
can the constants $c(n, a, m)$ be chosen so that $\liminf_{m \to +\infty}c(n, a, m)>0$ for each fixed $n$ and $a$? 
\end{question}

\section{Applications to drift spectra}\label{sec.applications}
We apply our results to the spectra of drifts in several settings. 
\subsection{Hyperbolic groups}\label{sec.hyp.gp}
We first discuss the case of hyperbolic groups. 
 Let $G$ be a non-elementary hyperbolic group, and 
 $U_0$ a finite symmetric generating set.
Let $\Cay(G,U_0)$ be the Cayley graph with the induced graph metric where each edge has a unit length.
The Cayley graph $\Cay(G, U_0)$ is $\delta$-hyperbolic for some $\delta \ge 0$.
We fix such $U_0$ once and for all. With respect to the metric in $\Cay(G, U_0)$,
we define the joint minimal displacement $L(U)$ for $U$ and the drift $\ell(\mu)$ for a $\mu$-random walk on $G$.

We consider the left action of $G$ on $\Cay(G,U_0)$.  
This action is properly discontinuous and cocompact, hence it is acylindrical. Therefore the action is $\mathcal D$-weakly acylindrical for 
some $\mathcal D$. The constant $\mathcal D$ depends only on $\# U_0$ and $\delta$.

Below $K_0$ denotes the absolute constant in Theorem \ref{thm.meta}.

\begin{cor}[Linear estimate in Hyperbolic group]\label{cor.drift.hyp.gp}
Fix  $\delta \ge 0$ and positive integers  $m_0$, and $m$.
There exists a constant $\kappa_1(\delta, m_0, m)>0$ such that the 
following holds. 

Let $G$ be a hyperbolic group, and $U_0$ a finite symmetric generating
set of $G$ such that $\# U_0=m_0$ and that $\Cay(G,U_0)$ is $\delta$-hyperbolic.
For every finite symmetric set $U$ in $G$ with $\# U=m$ such that $\<U\>$ is a non-elementary subgroup of $G$, 
we consider the $\mu_U$-random walk on $\Cay(G, U_0)$.

Then, we have
\[
\ell(\mu_U) \ge \kappa_1(\delta, m_0, m) L(U).
\]
\end{cor}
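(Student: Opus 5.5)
The plan is to deduce the statement from Theorem \ref{thm.drift2}, applied to the left action of $G$ on $X=\Cay(G,U_0)$. This requires three inputs: a weak acylindricity constant $\Dc$, an integer $N$ with $L(U^N)\ge K_0\delta+1$, and non-elementarity. Each must depend only on $\delta$ and $m_0$. The constant $\kappa_1(\delta,m_0,m)$ will then be $\kappa(\delta,\Dc,N,m)$ from Theorem \ref{thm.drift2}.

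\textbf{Non-elementarity and weak acylindricity.} Non-elementarity of $\<U\>$ is assumed. For weak acylindricity, I will use the remark before the statement: the action of $G$ on its Cayley graph is acylindrical. Concretely, if $d(x,gx)\le 1000\delta$, then $g$ lies in a ball of radius $1000\delta+1$, which has at most $m_0^{1000\delta+1}$ elements, so one may take $N,K$ depending only on $(\delta,m_0)$. By \cite[Proposition 5]{D}, the action is then $\Dc$-weakly acylindrical with $\Dc=\Dc(\delta,m_0)$. Weak acylindricity passes to the subgroup $\<U\>$, since its elementary closures are contained in those of $G$.

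\textbf{Uniform lower bound on $L(U^N)$.} This is the main obstacle, because it replaces the Margulis lemma used in the Hadamard case. I will first handle the case where $\<U\>$ is infinite and not virtually cyclic. I aim to show that $L(U^{k})\ge 1$ for some $k=k(\delta,m_0)$; more precisely, I will show that $\{x: \max_{u\in U^k} d(x,ux)\le R\}$ is empty for a suitable $R$.

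Suppose a point $x$ (which I may take to be a vertex, at the cost of $+1$) is moved at most $R$ by every element of $U^k$. Translate by $x^{-1}$, which conjugates $U$, so that $x=1$. Then every element of $U^k$ lies in the ball $B(R)$, whose size is bounded by $m_0^{R}$. Choosing $k>m_0^{R}$, a pigeonhole argument on the growth $U\subset U^2\subset\cdots\subset U^k\subset B(R)$ forces $U^{j}=U^{j+1}$ for some $j\le k$. Hence $\<U\>=U^j$ is finite, a contradiction. With $R=1$ this gives $L(U^k)\ge 1$ for $k=m_0+1$, say, which is not yet the needed bound.

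To reach $K_0\delta+1$, I will use the same argument with $R=K_0\delta+2$ and $k=\lceil m_0^{R}\rceil+1$. Then $U^k$ cannot lie in any ball $B(x,R)$ centered on the orbit, so $L(U^k)\ge R-1\ge K_0\delta+1$. One detail needs care: the infimum defining $L$ may be nearly attained at a non-vertex point, but moving to a nearby vertex costs only $+1$, which is absorbed. Thus $N=N(\delta,m_0)$.

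\textbf{Conclusion.} With $\Dc$ and $N$ depending only on $(\delta,m_0)$, Theorem \ref{thm.drift2} gives $\ell(\mu_U)\ge \kappa(\delta,\Dc,N,m)L(U)$. Note that this theorem handles the $1\notin U$ case internally via Lemma \ref{lemma.lazy}. The argument for $L(U^N)$ only requires $\<U\>$ to be infinite, which non-elementarity provides.
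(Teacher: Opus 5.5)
Your proposal is correct and is essentially the paper's proof: you apply Theorem \ref{thm.drift2} with $\Dc=\Dc(\delta,m_0)$ coming from acylindricity of the cocompact action on $\Cay(G,U_0)$, and you get $N=N(\delta,m_0)$ from the fact that the sets $U^n$ must keep growing (otherwise $\<U\>$ would be finite), so $U^N$ cannot fit inside a ball of radius about $K_0\delta+1$. You handle non-vertex basepoints and the restriction of weak acylindricity from $G$ to $\<U\>$ a bit more carefully than the paper does; your only slip, which the paper shares, is the ball-size bound $m_0^R$, which should be something like $(m_0+1)^R$ and only changes the value of $N$.
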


\proof
To show the claim, we assume that $1\in U$ and use Lemma \ref{lemma.lazy} if necessary.
We first show that there exists a positive integer $N$, which depends only 
on $\delta, m_0$ such that for every finite symmetric set $U$ with non-elementary $\<U\>$, 
we have
$L(U^N) \ge K_0 \delta +1$.
Indeed, since $1\in U$, the sequence 
$$U \subset U^2 \subset U^3 \subset \cdots$$
 is increasing. It is strictly increasing 
 since otherwise it would stabilize and $\langle U\rangle$ would be finite, 
which is impossible.
Hence $U^n$ contains at least $n$ points
for every integer $n>0$. 

Now define an integer $N:=\lceil{m_0^{K_0 \delta +1}\rceil}+1$. 
Let $x\in \Cay(G, U_0)$ be an arbitrary point.  
Since the set $U^N$ contains at least $N$ points, 
the set $U^Nx$  is not contained in the ball 
of radius $K_0 \delta +1 $ centered at $x$ 
in $\Cay(G,U_0)$ since the ball
contains at most $m_0^{K_0 \delta +1}$ points. 
This implies that  $L(U^N) \ge K_0 \delta +1$. 

As we noted before the statement, the action of $G$ on $\Cay(G,U_0)$ is $\Dc$-weakly acylindrical
for $\Dc=\Dc(\delta, m_0)$. 
We apply Theorem \ref{thm.drift2} to this action. 
Set $\kappa_1(\delta, m_0,m)=\kappa(\delta, \Dc, N,m)$, and 
we obtain the desired inequality. Here we used that both $\mathcal D, N$ depend
only on $\delta, m_0$. 
\qed

\medskip

Corollary \ref{cor.drift.hyp.gp} yields the following result on the drift spectrum of random walks on a fixed Cayley graph of a hyperbolic group.

\begin{cor}\label{cor.hyperbolic.spec}
Let $G$ be a non-elementary hyperbolic group, and $U_0$ a finite symmetric generating set of $G$. 
For each integer $m\ge 4$,
we define
$$
\mathcal{L}(G,U_0,m):=\Big\{\ell(\mu _U) \ : \ U\subset G, \ \# U \le m, \ U=U^{-1}, \ \text{$\<U\>$ is non-elementary} \Big\}.
$$
Then,
the set $\mathcal{L}(G,U_0,m)$
is discrete and unbounded.

In fact,
for each $A>0$ and each $m \ge 2$,
there are only finitely many symmetric $U$ up to conjugation by an element in $G$ with $\# U\le m$ such that $\<U\>$ is non-elementary and that $\ell(\mu_U) \le A$.

In particular, 
for each $A>0$, the set $\mathcal{L}(G,U_0,m) \cap (0,A]$ is finite. 
\end{cor}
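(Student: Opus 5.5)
Given $A>0$ and $m\ge 2$, I would use Corollary \ref{cor.drift.hyp.gp} to turn a drift bound into a bound on $L(U)$, and then use $L(U)$ to locate $U$ up to conjugation. All the claims then follow from this finiteness statement.

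\emph{Step 1: from drift to displacement.} Fix $A>0$. Let $U$ be symmetric with $\#U=m'\le m$, $\<U\>$ non-elementary and $\ell(\mu_U)\le A$. Apply Corollary \ref{cor.drift.hyp.gp} with the fixed $\delta$ and $m_0=\#U_0$, and with each $m'\le m$. Put $\kappa:=\min_{m'\le m}\kappa_1(\delta,m_0,m')>0$. Then
\[
L(U)\le \ell(\mu_U)/\kappa\le A/\kappa=:B .
\]

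\emph{Step 2: normalize by conjugation.} Choose a point $x\in\Cay(G,U_0)$ with $\max_{u\in U}d(x,ux)\le B+1$. Move $x$ to a vertex $g$ at distance at most $1/2$; then $\max_{u\in U}d(g,ug)\le B+2$. Conjugate by $g^{-1}$ and set $U':=g^{-1}Ug$. Left multiplication is an isometry, so $d(1,u'\cdot 1)=d(g,ug)\le B+2$ for every $u'\in U'$, i.e.\ $|u'|_{U_0}\le B+2$.

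Hence $U'$ is contained in the ball of radius $B+2$ about $1$ in the word metric. This ball has at most $m_0^{B+3}$ elements, so there are finitely many possible $U'$.

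The drift is invariant under conjugation. Indeed, $\mu_{g^{-1}Ug}$-walk positions are $g^{-1}w_ng$, and
\[
|d(1,g^{-1}w_ng)-d(1,w_n)|\le 2|g|_{U_0},
\]
which vanishes after dividing by $n$. So the conjugacy classes of such $U$ are finite, and $\mathcal{L}(G,U_0,m)\cap(0,A]$ is finite for every $A$.

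\emph{Consequences.} A subset of $\R$ meeting every $(0,A]$ in a finite set is discrete. All values are positive, by Corollary \ref{cor.drift.hyp.gp} with $L(U)>0$ as $\<U\>$ is infinite.

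For unboundedness, take $m\ge 4$ so that we may use $U=\{a^{\pm k},b^{\pm k}\}$ with $a,b$ independent loxodromics. Since $\ell(\mu_U)\ge \kappa L(U)$ and $L(U)\ge\max\{\tr{a^k},\tr{b^k}\}=k\max\{\tr{a},\tr{b}\}\to\infty$ by Lemma \ref{lemma.disp}, the drifts are unbounded.

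\emph{Main obstacle.} Step 2 is routine. The only delicate point I expect is making sure the constant in Step 1 is uniform over $\#U\le m$. This is handled by taking the minimum over finitely many $m'$, and the lazy-walk reduction is already built into Corollary \ref{cor.drift.hyp.gp}.
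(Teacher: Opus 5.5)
Your proposal is correct and follows essentially the same route as the paper's proof. You use Corollary \ref{cor.drift.hyp.gp} to bound $L(U)$ by $A/\kappa$, conjugate $U$ into a fixed ball about $1$ to get finiteness up to conjugation, and take $U=\{a^{\pm k},b^{\pm k}\}$ for unboundedness. Your minimum over $m'\le m$ handles the $\#U\le m$ versus $\#U=m$ discrepancy that the paper's proof passes over. Your explicit vertex adjustment and your check that the drift is invariant under conjugation fill in details the paper leaves implicit.
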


\proof
Let $\delta$ denote the hyperbolicity constant of $\Cay(G, U_0)$, and $m_0:=\# U_0$.
We show that for each $A>0$ and for each $m \ge 2$,
there are only finitely many $U$ up to conjugation by an element in $G$ with $\# U=m$ such that $\<U\>$ is non-elementary and that $\ell(\mu_U) \le A$.
Since $\ell(\mu_U)$ is invariant under conjugating $U$ in $G$,
the set $\mathcal{L}(G, U_0, m) \cap (0, A]$ is finite.

Applying to Corollary \ref{cor.drift.hyp.gp},
we have a constant
$\kappa_1(\delta, m_0, m)>0$ 
such that
\[
\ell(\mu_U) \ge \kappa_1(\delta, m_0, m)L(U).
\]
Therefore, 
if $\ell (\mu_U) \le A$, then 
$L(U) \le A/\kappa_1(\delta, m_0, m)$.
Then, there exists an element $g$ in $G$ such that $g^{-1}Ug$ is contained in the ball of radius
$A/\kappa_1(\delta, m_0, m)+1$ centered at $1$ in $\Cay(G, U_0)$.
Thus, 
there are only finitely many such $U$ up to conjugation by an element in $G$.
In particular, this implies that for each  $A>0$,
the set $\mathcal{L}(G, U_0, m) \cap (0, A]$ is finite, so that the set $\mathcal{L}(G, U_0, m)$ is discrete.

Finally, the set is unbounded since $L(U)$ is unbounded if we vary $U$.
To see this, take two independent infinite order 
elements $g,h \in G$, set 
\[
U=\{g^n, g^{-n}, h^n, h^{-n}\}
\]
and let $n \to \infty$, so that the set $\mathcal{L}(G, U_0, m)$ is infinite and unbounded. 
\qed

\begin{remark}\label{rem.KKS}
Corollary \ref{cor.hyperbolic.spec} may be viewed as a partial generalization
of \cite[Theorem D]{KKS} on free groups to 
hyperbolic groups, possibly with torsion.
In \cite{KKS}, they discuss the set of values of {\it generic stretching factors} on finite rank non-abelian free groups (see their definition).

To explain the relation, fix a Cayley graph of a free group of rank $m$ associated to a free basis.
For the simple random walk associated with another free basis, the drift is equal to the generic stretching factor multiplied by $(m-1)/m$ (see \cite[Proof of Theorem 2.2, p.16]{KKS}).

Thus, in our terminology, \cite[Theorem D]{KKS} implies that the drifts obtained by varying the free basis belong to $\frac{1}{N}\mathbb Z$ for some positive integer $N$. We do not know  whether 
the set defined in 
Corollary \ref{cor.hyperbolic.spec} is contained in $r \Z$ for some $r \in \R$ for general hyperbolic groups.
\end{remark}

\subsection{Mapping class groups}\label{sec.MCG}

We refer the readers to \cite{MM} and \cite{Btight} 
for background on mapping class groups from the viewpoint used here.
Let $\mathcal S$ be a closed orientable surface of genus at least two. 
Let $\Mod(\mathcal S)$ be the mapping class group. It is the group of
isotopy classes of orientation preserving homeomorphisms of $\mathcal S$. This is a 
finitely presented group. 

Let $\mathcal C(\mathcal S)$ be the {\bf curve graph}.
It is a simplicial complex such that the  vertices are isotopy classes of essential simple curves on 
$\mathcal S$, and that two vertices  are joined by an edge if they are realized as disjoint simple curves. It is  connected, and we give the combinatorial path metric where every edge has length $1$. 
The curve graph $\mathcal C(\mathcal S)$ has infinite diameter and $\delta$-hyperbolic for some $\delta>0$ \cite[Theorem 1.1]{MM}.

We recall the Nielsen-Thurston classification: every  element $g \in \Mod(\mathcal S)$ is either of finite order, reducible, or {\bf pseudo-Anosov}. 
Here,  $g$ is {\bf reducible} if it has infinite order and 
preserves a finite collection of distinct isotopy classes of essential simple closed
curves on $\mathcal S$.

In the terminology of $\delta$-hyperbolic spaces,
an element $g \in \Mod(\mathcal S)$ is pseudo-Anosov if and only if $g$ acts on $\mathcal C(\mathcal S)$ as a hyperbolic isometry.
Otherwise, an element is elliptic in the sense that $g$ has a bounded orbit on 
$\mathcal C(\mathcal S)$. Moreover, if $g$ is elliptic, a finite power of $g$
fixes a vertex. See \cite[Proposition 4.6]{MM}.

The action of $\Mod(\mathcal S)$ on $\mathcal C(\mathcal S)$ is acylindrical \cite[Theorem 1.3]{Btight}. Hence, it is $\mathcal D$-weakly acylindrical, where the constant $\Dc$ depends only on $\mathcal S$,
\cite[Proposition 5]{D}.
We note that if $g$ is pseudo-Anosov, then $E(g)$ for the action of $\Mod(\mathcal S)$
on $\mathcal C(\mathcal S)$ is virtually cyclic. This is an immediate consequence
of the acylindricity of the action (cf.\ \cite[Lemma 6.5]{DGO}).

\begin{lemma}\label{lemma.mcg}
There exists a constant $N=N(\mathcal S)$ such that for 
every finite set $U$ in $\Mod(\mathcal S)$ with
$\<U\>$ containing a pseudo-Anosov element, 
we have
\[
L(U^N) \ge K_0\delta+1.
\]
Here $K_0$ is the absolute constant in Theorem \ref{thm.meta}, and $\delta$ is the hyperbolicity constant of $\mathcal C(\mathcal S)$.
\end{lemma}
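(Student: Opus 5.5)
The plan is to produce a pseudo-Anosov element of uniformly bounded word length in $U$, and then take a bounded power of it. The key observation is Lemma \ref{lemma.disp} \eqref{item.trL}: if $f\in U^k$, then $\tr{f^p}\le L(U^{kp})$. So it suffices to find $k=k(\mathcal S)$ and $p=p(\mathcal S)$ such that $U^k$ contains a pseudo-Anosov $f$ with $p\,\tr{f}\ge K_0\delta+1$. We may assume $1\in U$, since adding $1$ only enlarges $U^n$. We may also replace $U$ by $U\cup U^{-1}$ at the cost of doubling exponents, because $L(U)=L(U\cup U^{-1})$ and $(U\cup U^{-1})^n\subset U^n$ fails in general but $L$ is controlled by \cite[Remark 7.3]{BF}; if this causes trouble, we restrict to symmetric $U$, which is the case needed in Theorem \ref{thm.intro_mcg}. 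With $N:=kp$ we then get $L(U^N)\ge\tr{f^p}\ge K_0\delta+1$.

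The choice of $p$ comes from Bowditch's theorem that stable translation lengths of pseudo-Anosov elements on $\mathcal C(\mathcal S)$ are bounded below by a constant $c(\mathcal S)>0$. In fact they are rational with denominators bounded in terms of $\mathcal S$. So we take $p:=\lceil (K_0\delta+1)/c(\mathcal S)\rceil$, and then $\tr{f^p}=p\,\tr{f}\ge K_0\delta+1$ for every pseudo-Anosov $f$. This step is routine once the uniform bound is quoted.

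Finding a short pseudo-Anosov splits into two cases according to whether $\<U\>$ is virtually cyclic.

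\emph{Non-virtually-cyclic case.} Here $\<U\>$ contains a pseudo-Anosov and is not virtually cyclic, hence is not virtually abelian, since a virtually abelian subgroup of $\Mod(\mathcal S)$ containing a pseudo-Anosov is virtually cyclic. We then invoke Mangahas' ``short-word pseudo-Anosov'' theorem: there is $k(\mathcal S)$ such that $U^{k}$ contains a pseudo-Anosov.

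\emph{Virtually cyclic case.} Here $\<U\>$ is a finite extension of $\<g\>$ with $g$ pseudo-Anosov, and every infinite-order element of $\<U\>$ is pseudo-Anosov. Finite subgroups of $\Mod(\mathcal S)$ have order bounded by a constant $B(\mathcal S)$; for example $84(g-1)$ by Hurwitz, or the Kerckhoff bound for finite groups. The plan is to show that $U^{B+1}$ contains an infinite-order element. Suppose instead that every element of $U^{B+1}$ has finite order. Consider the finite-order elements of the virtually cyclic group $\<U\>$: they form a union of finite subgroups, namely the finite normal subgroup $F$ together with at most one dihedral-type coset. I will use this structure (the quotient of $\<U\>$ by $F$ is $\Z$ or $D_\infty$) to show that if all of $U^2$ maps to torsion in $\Z$ or $D_\infty$, then $\<U\>$ is finite. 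In $D_\infty$, the product of two distinct reflections has infinite order. This contradicts the presence of $g$, so already $U^2$ contains an infinite-order element, hence a pseudo-Anosov.

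The main obstacle is the non-virtually-cyclic case: the uniform bound on the word length of a pseudo-Anosov requires an external deep input. I would first try to cite Mangahas' result directly. I need to check its hypotheses match exactly: it requires $\<U\>$ not virtually abelian and containing a pseudo-Anosov, and this is guaranteed by the standard classification of subgroups of $\Mod(\mathcal S)$ by Ivanov and McCarthy--Papadopoulos. If those hypotheses do not match as stated, the fallback is an argument with acylindricity. Since $\<U\>$ is non-elementary on $\mathcal C(\mathcal S)$, the acylindrical analogue of Lemma \ref{lemma.independent} via Breuillard--Fujiwara's growth of $L(U^n)$ for acylindrical actions would give $L(U^{N})\ge K_0\delta+1$ directly, with $N$ depending only on the acylindricity constants, hence only on $\mathcal S$.
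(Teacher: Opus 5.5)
Your proof is essentially the paper's. You obtain a pseudo-Anosov $f\in U^{k}$ with $k=k(\mathcal S)$ from Mangahas' short-word theorem, use the uniform lower bound on stable translation lengths of pseudo-Anosovs on $\mathcal C(\mathcal S)$ (the paper cites \cite[Proposition 4.6]{MM}), and apply Lemma~\ref{lemma.disp}\eqref{item.trL} to a bounded power $f^p$; the paper simply invokes \cite[Theorem 1.1]{M} whenever $\<U\>$ contains a pseudo-Anosov, so your separate (correct) $D_\infty$ argument for the virtually cyclic case is harmless but unnecessary.

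One caveat on non-symmetric $U$: \cite[Remark 7.3]{BF} only gives $L(V)=L(V\cup V^{-1})$ for a single set and does not compare $L((U\cup U^{-1})^n)$ with $L(U^n)$. So for non-symmetric $U$ this argument needs the short pseudo-Anosov to be a positive word lying in $U^{k}$ itself. The paper's one-line citation also leaves this point implicit, and it is irrelevant for the symmetric sets used in Theorem~\ref{thm.mcg}.
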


\proof
It is known that there exists a constant $N_1=N_1(\mathcal S)$ such that 
if $\<U\>$ contains a pseudo-Anosov element, then
$U^{N_1}$ contains a pseudo-Anosov element $g$ \cite[Theorem 1.1]{M}.

Also, there is a constant $\tau=\tau(\mathcal S)>0$ such that $\tr u \ge \tau$ for every pseudo-Anosov
element $u$ in $\Mod(\mathcal S)$ \cite[Proposition 4.6]{MM}.
Hence, for every $n \in \N_0$,
we have $L(U^{N_1 n}) \ge \tr{g^n} \ge n\tau$ by Lemma \ref{lemma.disp} \eqref{item.trL}.
Letting $n:=\cl{(K_0\delta+1)/\tau}$ and $N:=N_1 n$,
we have
\[
L(U^N) \ge K_0\delta+1.
\]
We note that $N$ depends only on $\mathcal S$.
\qed

\medskip

We are ready to apply Theorem \ref{thm.drift2} to this setting.

\begin{thm}[Mapping class groups]\label{thm.mcg}
Let $\mathcal S$ be a closed orientable surface of genus at least two, and $m \in \N$. 
Let us consider the action of $\Mod(\mathcal S)$ on the curve graph $\mathcal C(\mathcal S)$ endowed with the combinatorial path metric.
Then, there exists a constant $\kappa(\mathcal S,m)>0$
such that the following holds:

Let $U$ be a finite symmetric set in $\Mod(\mathcal S)$ with $\#U=m$ such that $\<U\>$ contains a pseudo-Anosov element 
and not virtually cyclic. 
Consider the uniform distribution $\mu_U$ on $U$.
Then, we have
$$\ell(\mu_U) \ge \kappa(\mathcal S,m) L(U).$$
\end{thm}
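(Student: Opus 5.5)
The plan is to apply Theorem \ref{thm.drift2} with $X=\mathcal C(\mathcal S)$, $\delta$ the hyperbolicity constant of the curve graph, and suitable constants $\Dc$ and $N$ depending only on $\mathcal S$. Then set $\kappa(\mathcal S,m):=\kappa(\delta,\Dc,N,m)$. There are three hypotheses of Theorem \ref{thm.drift2} to verify for a finite symmetric $U$ with $\#U=m$ such that $\<U\>$ contains a pseudo-Anosov element and is not virtually cyclic.

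The first two are immediate from earlier material. Weak acylindricity: the action of $\Mod(\mathcal S)$ on $\mathcal C(\mathcal S)$ is acylindrical \cite[Theorem 1.3]{Btight}, hence $\Dc$-weakly acylindrical with $\Dc=\Dc(\mathcal S)$. This property passes to the subgroup $\<U\>$, since the defining inequality only involves $h\notin E(g)$, and the elementary closure in $\<U\>$ is the intersection with $\<U\>$ of the elementary closure in $\Mod(\mathcal S)$. The lower bound on joint displacement: Lemma \ref{lemma.mcg} gives $N=N(\mathcal S)$ with $L(U^N)\ge K_0\delta+1$. Here I would pass to $U\cup\{1\}$ via Lemma \ref{lemma.lazy} where needed (as in the proof of Theorem \ref{thm.drift2}), at the cost of replacing $m$ by $m+1$ or $2m$, which only changes the constant.

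The remaining step, and the only one with real content, is non-elementarity of $\<U\>$. Let $g\in\<U\>$ be pseudo-Anosov. Its elementary closure $E(g)$ in $\Mod(\mathcal S)$ is virtually cyclic by acylindricity (cf.\ \cite[Lemma 6.5]{DGO}). Since $\<U\>$ is not virtually cyclic, there is $u\in\<U\>\setminus E(g)$. I would then argue as in Lemma \ref{lemma.nonelem}. Set $h=ugu^{-1}$, which is a hyperbolic isometry. If $g$ and $h$ shared a fixed point in $\partial\mathcal C(\mathcal S)$, then the quasi-axes $\qa{g}$ and $u\qa{g}$ would fellow-travel along a ray, so $\diam\pi_{\qa g}(u\qa g)=\infty$. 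This contradicts $\Dc$-weak acylindricity, which bounds this diameter by $\Dc(1+\tr g)$ since $u\notin E(g)$. Hence $g,h$ are independent and $\<U\>$ is non-elementary. I expect this to be the main (though mild) obstacle. One must check that a common fixed point forces infinite projection diameter; this is standard $\delta$-hyperbolic geometry, since quasi-geodesics converging to the same boundary point are eventually within $O(\delta+\Delta)$ of each other, and Lemma \ref{lemma.diam} then converts this to unbounded projection.

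With all hypotheses verified and all constants depending only on $\mathcal S$ (and $m$), Theorem \ref{thm.drift2} yields $\ell(\mu_U)\ge\kappa(\delta,\Dc,N,m)L(U)$. This is the claim, with $\kappa(\mathcal S,m)$ explicit of the form $1/(16M(2m)^{2M})$ for $M=M(\mathcal S)$.
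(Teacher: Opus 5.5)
Your proposal is correct and follows essentially the same route as the paper: you take $N(\mathcal S)$ from Lemma~\ref{lemma.mcg} and $\Dc(\mathcal S)$-weak acylindricity from acylindricity of the action on $\mathcal C(\mathcal S)$. You then prove non-elementarity by choosing $u\in\<U\>\setminus E(g)$ (available since $E(g)$ is virtually cyclic) and showing that $g$ and $ugu^{-1}$ cannot share a fixed point at infinity without forcing $\diam\pi_{\qa g}(u\qa g)=\infty$, and finally invoke Theorem~\ref{thm.drift2}. Your additional remarks (that weak acylindricity restricts to $\<U\>$ because $E_{\<U\>}(g)=E(g)\cap\<U\>$, and that fellow-travelling quasi-axes give infinite projection via Lemma~\ref{lemma.diam}) only make explicit details the paper leaves implicit.
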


\proof
The curve graph $\mathcal C(\mathcal S)$ is $\delta$-hyperbolic, and the action of $\Mod(\mathcal S)$ on $\mathcal C(\mathcal S)$ is $\mathcal D$-weakly acylindrical, where $\delta$ and $\Dc$ depend only on $\mathcal S$. 
By assumption, $\<U\>$ contains a pseudo-Anosov element.
By Lemma \ref{lemma.mcg}, 
there exists $N=N(\mathcal S) \in \N$, which depends only on $\mathcal S$, such that $L(U^{N}) \ge K_0 \delta +1$.

The elementary closure of every pseudo-Anosov element is virtually $\Z$.
Since by assumption $\<U\>$ contains a pseudo-Anosov element and is not virtually cyclic, 
there exists $u \in \<U\>$ such that $u \notin E(g)$.
Note that $g$ and $ugu^{-1}$ have disjoint sets of fixed points in the boundary of $\mathcal C(\mathcal S)$. Indeed, if otherwise, then the diameter of $\pi_{\qa{g}}(u\qa{g})$ is infinite, which contradicts the (weak) acylindricity of the $\Mod(\mathcal S)$-action on $\mathcal C(\mathcal S)$.
Thus, the action of $\<U\>$ on $\mathcal C (\mathcal S)$ is non-elementary.

Therefore, by Theorem \ref{thm.drift2}, we have
$$\ell(\mu_U) \ge \kappa(\delta, \mathcal D, N(\mathcal S), m) L(U).$$
Taking $\kappa(\mathcal S,m)=\kappa(\delta, \mathcal D, N(\mathcal S), m)$ gives the desired conclusion. 
\qed

\medskip

To state some consequences, we recall the classification of 
finitely generated subgroups of mapping class groups due to N.\ V.\ Ivanov \cite[7.15 Corollary]{I}. 
Similar to the classification of individual elements, such a subgroup
is one of the following: finite; reducible; virtually cyclic
and containing a pseudo-Anosov element; or containing two independent pseudo-Anosov elements. Here, a subgroup is {\bf reducible} if (it is infinite and) it preserves a finite 
collection of distinct isotopy classes of disjoint essential simple closed curves on $\mathcal S$.

Theorem \ref{thm.mcg} gives the following drift dichotomy
on finitely generated subgroups of mapping class groups.

\begin{cor}[Gap for $\ell(\mu_U)$ in $\Mod(\mathcal S)$]\label{cor.gap.mcg}
Let $\mathcal S$ be a closed orientable surface of genus at least two, and 
$m \in \N$. 
Then, there exists a constant $c(\mathcal S, m)>0$ such that the following holds:

For every finite set $U$ in $\Mod(\mathcal S)$ with $\# U=m$ and $U=U^{-1}$,
we have
\begin{enumerate}[(1)]
\item
$\ell(\mu_U) \ge c(\mathcal S, m)$, if
$\<U\>$ contains at least one pseudo-Anosov element, and $\< U\>$ is not virtually cyclic.

\item
Otherwise, $\ell(\mu_U)=0$.
\end{enumerate}

\end{cor}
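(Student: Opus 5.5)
Part (1) is a lower bound for $L(U)$ fed into Theorem \ref{thm.mcg}; part (2) goes through Ivanov's classification case by case.

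For (1), set $c(\mathcal S,m):=\kappa(\mathcal S,m)\,\tau(\mathcal S)$. Here $\kappa(\mathcal S,m)$ is the constant of Theorem \ref{thm.mcg} and $\tau(\mathcal S)>0$ is the uniform lower bound on translation lengths of pseudo-Anosov elements on $\mathcal C(\mathcal S)$ \cite[Proposition 4.6]{MM}. It then suffices to show $L(U)\ge \tau(\mathcal S)$, or at least $L(U)\ge$ some constant depending only on $\mathcal S$.

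If $U$ itself contains a pseudo-Anosov element, Lemma \ref{lemma.disp} \eqref{item.trL} gives $L(U)\ge\tau$ directly. In general, use \cite[Theorem 1.1]{M}: $U^{N_1}$ contains a pseudo-Anosov element $g$ with $N_1=N_1(\mathcal S)$. Then Lemma \ref{lemma.disp} \eqref{item.Un} and \eqref{item.trL} give
\[
N_1 L(U)\ge L(U^{N_1})\ge \tr{g}\ge \tau,
\]
so $L(U)\ge \tau/N_1$. Accordingly, define $c(\mathcal S,m):=\kappa(\mathcal S,m)\tau(\mathcal S)/N_1(\mathcal S)$. Combined with Theorem \ref{thm.mcg}, this gives (1).

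For (2), apply Ivanov's classification to $\<U\>$. It is finite, reducible, or virtually cyclic containing a pseudo-Anosov element; the fourth type of the classification contains two independent pseudo-Anosovs and falls under (1).

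\emph{Finite case.} The orbit $\<U\>o$ is bounded, so $d(o,w_no)$ is bounded and $\ell=0$.

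\emph{Reducible case.} $\<U\>$ preserves a finite set of curves, hence has a bounded orbit on $\mathcal C(\mathcal S)$ (the orbit of a vertex in that collection is finite), so again $\ell(\mu_U)=0$. One caveat: if $\<U\>$ is reducible it contains no pseudo-Anosov element, so the whole "otherwise" case except the virtually cyclic pseudo-Anosov one has a bounded orbit. This is the cleanest route.

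\emph{Virtually cyclic with a pseudo-Anosov.} Here $\<U\>$ is virtually $\Z$, hence virtually nilpotent, and $\mu_U$ is symmetric since $U=U^{-1}$. Argue exactly as in Corollary \ref{cor.dichotomy}: the Poisson boundary is trivial \cite[Proposition 1.4]{KV}, so the drift in the word metric $|\cdot|_U$ vanishes by \cite[Corollary 2]{KL}. The Lipschitz bound $d(o,go)\le (\max_{u\in U}d(o,uo))\,|g|_U$ then transfers this to $\ell(\mu_U)=0$. An alternative here is a direct argument: a symmetric random walk on a virtually-$\Z$ group has zero drift, via a finite-index $\Z$ and the recurrence or centering of the induced walk.

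The main point needing care is the uniform lower bound $L(U)\ge\tau/N_1$ in (1). The rest is bookkeeping, with the symmetric hypothesis $U=U^{-1}$ used only in the virtually cyclic case.
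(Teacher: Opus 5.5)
Your proposal is correct and follows the paper's proof. For (1) you bound $L(U)$ below uniformly, using \cite[Theorem 1.1]{M}, the pseudo-Anosov translation-length gap \cite[Proposition 4.6]{MM} and Lemma \ref{lemma.disp}, and feed this into Theorem \ref{thm.mcg}. For (2) you go through Ivanov's classification: bounded orbits in the finite and reducible cases, and the Poisson-boundary argument of Corollary \ref{cor.dichotomy} in the virtually cyclic case. The only difference is cosmetic: you derive $L(U)\ge \tau/N_1$ directly, whereas the paper goes through Lemma \ref{lemma.mcg} to get $L(U)\ge (K_0\delta+1)/N(\mathcal S)$, which is essentially the same estimate.
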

\proof
(1) 
Since $\<U\>$ contains a pseudo-Anosov element, 
by Lemma \ref{lemma.mcg}, 
there exists $N=N(\mathcal S)$, which depends only on $\mathcal S$, such that $L(U^{N}) \ge K_0 \delta +1$, where $K_0$ is the absolute constant in Theorem \ref{thm.meta} and $\delta$ is the hyperbolicity constant of the curve graph $\mathcal C(\mathcal S)$.
Hence, by Theorem \ref{thm.mcg}, we have
\[
\ell(\mu_U) \ge \kappa(\mathcal S,m) L(U) \ge \kappa(\mathcal S, m)\frac{K_0 \delta+1}{N(\mathcal S)}.
\]
Letting $c(\mathcal S, m):=\kappa(\mathcal S, m)(K_0\delta+1)/N(\mathcal S)$ yields the desired inequality.

(2) We use the classification of subgroups of mapping class groups.
First, suppose that $\<U\>$ is finite, then its orbit is finite, hence clearly
$\ell(\mu_U)=0$. Second, suppose that 
$\< U\>$ is a reducible subgroup in $\Mod(\mathcal S)$. Then there is a finite set
of vertices $V$ in $\mathcal C(\mathcal S)$ that is invariant by $\< U\>$.
Again, the orbit by $\< U\>$ is finite, so that $\ell(\mu_U)=0$.
Finally, suppose $\<U\>$ contains a pseudo-Anosov element $g$, and $\<U\>$ is virtually cyclic.
Then, we have $\ell(\mu_U)=0$ (cf.\ the proof of Corollary \ref{cor.dichotomy}).
This completes the argument. 
\qed

\addcontentsline{toc}{section}{References}
\bibliographystyle{alpha}
\bibliography{drift}

\KF
\RT

\end{document}